\newtheorem{theorem}{Theorem}
\newtheorem{lemma}[theorem]{Lemma}
\newtheorem{proposition}[theorem]{Proposition}
\newtheorem{corollary}[theorem]{Corollary}
\theoremstyle{definition}
\newtheorem{definition}[theorem]{Definition}
\newtheorem{example}[theorem]{Example}
\theoremstyle{remark}
\newtheorem{remark}[theorem]{Remark}
\numberwithin{equation}{section}
\numberwithin{theorem}{section}
\newcommand\thref{Theorem \ref}
\newcommand\leref{Lemma \ref}
\newcommand\prref{Proposition \ref}
\newcommand\coref{Corollary \ref}
\newcommand\deref{Definition \ref}
\newcommand\reref{Remark \ref}
\newcommand\seref{Section \ref}
\renewcommand{\comment}[1]{}
\def\CC{\mathbb{C}}
\def\N{\mathscr{S}}
\def\NN{\hat{\mathscr{S}}}
\def\Y{\mathcal{Y}}
\def\V{\mathcal{V}}
\def\A{\mathcal{A}}
\def\T{\mathcal{T}}
\def\U{\mathcal{U}}
\def\ZZ{\mathbb{Z}}
\def\h{\mathfrak{h}}
\DeclareMathOperator\End{End}
\DeclareMathOperator\Der{Der}
\DeclareMathOperator\Hom{Hom}
\DeclareMathOperator\ad{ad}
\DeclareMathOperator\rank{rank}
\DeclareMathOperator\Ker{Ker}
\DeclareMathOperator\Res{Res}
\DeclareMathOperator\Span{span}
\DeclareMathOperator\LF{LFie}
\def\LFT{\LF_T}
\def\vac{{\boldsymbol{1}}}  
\def\ii{\mathrm{i}} 
\def\al{\alpha}
\def\be{\beta}
\def\de{\delta}
\def\ep{\varepsilon}
\def\om{\omega}
\def\ze{\zeta}
\def\z{z}
\def\d{\partial}
\def\lieh{{\mathfrak{h}}}
\begin{document}

\title[Logarithmic Vertex Algebras]{Logarithmic Vertex Algebras}
\author{Bojko Bakalov}
\author{Juan J. Villarreal}
\address{Department of Mathematics,
North Carolina State University,
Raleigh, NC 27695, USA}
\email{bojko\_bakalov@ncsu.edu}
\email{juanjos3villarreal@gmail.com}


\date{July 21, 2021; Revised August 8, 2021}

\keywords{Logarithmic conformal field theory; vertex algebra; Virasoro algebra}

\subjclass[2010]{Primary 17B69; Secondary 81R10, 81T40}

\begin{abstract}
We introduce and study the notion of a logarithmic vertex algebra, which is a vertex algebra with logarithmic singularities in the operator product expansion of quantum fields;
thus providing a rigorous formulation of the algebraic properties of quantum fields in logarithmic conformal field theory.
We develop a framework that allows many results about vertex algebras to be extended to logarithmic vertex algebras, including in particular the Borcherds identity and Kac Existence Theorem.
Several examples are investigated in detail, and they exhibit some unexpected new features that are peculiar to the logarithmic case.
\end{abstract}

\maketitle
\tableofcontents

\section{Introduction}\label{s1}
Vertex algebras essentially appeared in physics as chiral algebras in conformal field theory \cite{BPZ,Go,DMS}, and in mathematics in the construction of the moonshine module of the Monster group \cite{Bo,FLM}. Since then, the theory of vertex algebras has been developed in different directions (see, e.g., \cite{FB, LL, K, KRR}).
In particular, a generalization called \emph{generalized vertex algebras} was later given in \cite{FFR, DL, M}  (called vertex operator para-algebras in \cite{FFR}). Important examples of generalized vertex algebras have been constructed, in particular, in \cite{FFR, DL, M, GL, BK2}.

In this work, we define and study a generalization of vertex algebras, which we call \emph{logarithmic vertex algebras}. We develop a framework in terms of a locally-nilpotent endomorphism $\N\in \End(V)\otimes \End(V)$ for a vector space $V$, which 
we call an (infinitesimal) \emph{braiding map} on $V$. This allows many results about vertex algebras to be extended to logarithmic vertex algebras. A logarithmic vertex algebra with a trivial braiding map $\N=0$ is an ordinary vertex algebra, and in this case our results reduce to known statements about vertex algebras. 

Now we describe two motivations for studying logarithmic vertex algebras. 
Our first motivation comes from the so-called \emph{twisted modules} of vertex algebras, which were introduced and developed in \cite{Le, FLM, FFR, D}. Let $\sigma$ be a finite-order automorphism of a vertex algebra $U$. 
Then for any $\sigma$-twisted $U$-module $W$, the direct sum $U\oplus W$ naturally has a structure of a generalized vertex algebra (see, e.g., \cite{BK2}).
Note that every finite-order automorphism is semisimple. The notion of a twisted module has been generalized for non-semisimple automorphisms \cite{H,B};  in this case, the quantum fields involve the logarithm of the formal variable, and such modules are called \emph{twisted logarithmic modules}. Let $\mathcal{N} \in\Der(U)$ be a locally-nilpotent derivation of a vertex algebra $U$; then $\varphi=e^{-2\pi \ii \mathcal{N}}$ is an automorphism of $U$. 
In \seref{ex4.3} below, we show that $V=U\oplus W$ has a natural structure of a logarithmic vertex algebra for any $\varphi$-twisted logarithmic $U$-module $W$. In this case, the braiding map $\N$ on $V$ is given by:
\begin{equation*}
\N\big|_{U \otimes U} = 0 \,, \quad \N\big|_{U \otimes W} = \mathcal{N} \otimes I \,, \quad  \N\big|_{W \otimes U} = I \otimes \mathcal{N} \,, \quad  \N\big|_{W \otimes W} = 0 \,,
\end{equation*}
where $I$ denotes the identity operator.
The notion of a logarithmic vertex algebra extends that of a twisted logarithmic module by allowing logarithmic singularities in the algebra itself and not just in the action of the algebra on its module.
Many of the ideas and results of the present paper generalize and extend those developed in \cite{B} for twisted logarithmic modules.

Our second and main motivation comes from \emph{logarithmic conformal field theory} (LCFT); see, e.g., \cite{G1, G2, Ga, CR} and the references therein.
We are motivated by LCFT to define \emph{conformal} logarithmic vertex algebras (Definition \ref{d3.10}), in which we have a Virasoro Lie algebra action on the vector space $V$.
In LCFT, the Virasoro operator $L_{0}$ is not semisimple as in usual conformal vertex algebras, but acts as a Jordan matrix on $V$. 
We denote by $L_{0}^{(s)}$ the semisimple part of $L_{0}$ and by $L^{(n)}_{0}=L_{0}-L^{(s)}_{0}$ its locally-nilpotent part, according to its Jordan--Chevalley decomposition.
Then we have the following relation between $L_{0}^{(n)}$ and the braiding map $\N$ in a conformal logarithmic vertex algebra $V$. Suppose that
\begin{equation*}
L_{0}^{(n)}=d+\sum_{i} f_i g_i \qquad\text{where}\quad d,f_i,g_i\in \Der(V) \,, 
\end{equation*}
an assumption that holds in many examples. Then 
\begin{equation*}
\N=-\sum_{i} (f_i\otimes g_i+g_i\otimes f_i) \,;
\end{equation*}
see Proposition \ref{pro3.11} below for a more precise statement (in the fermionic case, one has to include appropriate signs).
Notice that, as already observed in \cite{G1}, if $L_0$ is not semisimple so $L^{(n)}_{0} \ne0$, then we are forced to have $\N\ne0$ and hence logarithms.

In this paper, we present a rigorous formulation of the algebraic properties of quantum fields in LCFT, which
can be used to do explicit calculations with the operator product expansion (OPE). 
We derive a version of the Kac Existence Theorem, 
thus providing a way to construct a logarithmic vertex algebra from a collection of generating fields.
In particular, we interpret an example of a LCFT at central charge $c=0$, due to Gurarie--Ludwig \cite{GL1,GL2,G2}, as an explicit example of a conformal logarithmic vertex algebra (see \seref{e4.7}).
In a future work, we will study the representations of this logarithmic vertex algebra, which we hope will shed light on LCFT at $c=0$.

In more details, to every vector (state) $a$ in a logarithmic vertex algebra $V$, there corresponds a field $Y(a,z)$, which is a linear map from $V$ to the space of Laurent series in a formal variable $z$
that are also polynomials in another formal variable $\ze=\log z$,
\begin{equation*}
Y(a,z) \colon V \to V(\!(z)\!)[\ze] \,, \qquad a\in V \,.
\end{equation*}
The map $a\mapsto Y(a,z)$ is called the \emph{state-field correspondence} and has the property that
\begin{equation*}
Y(\vac,z)=I \,, \qquad Y(a,z)\vac \in V[\![z]\!] \,, \qquad Y(a,z)\vac \big|_{z=0} = a \,,
\end{equation*}
where $\vac$ denotes the vacuum vector.
The fields $Y(a,z)$ have a mode expansion that involves the braiding map $\N$:
\begin{equation*}
Y(a,z)b = \sum_{n\in \ZZ} z^{-n-1-\N} a_{(n+\N)}b \,, \qquad z^{-\N} := e^{-\ze\N} \,,
\end{equation*}
which generalizes the corresponding expression about twisted logarithmic modules from \cite{B}. The precise meaning of this formula is that we have bilinear products
\begin{equation*}
\mu_{(n)} \colon V\otimes V\rightarrow V\,, \qquad {\mu}_{(n)}(a\otimes b)= a_{(n+\N)}b \,, \qquad n\in\ZZ \,,
\end{equation*}
and then
\begin{align*}
Y(a,z)b &= \sum_{n\in \ZZ} \mu_{(n)} \bigl( z^{-n-1-\N} (a \otimes b) \bigr) \\
&=\sum_{ \substack{i,n\in\ZZ \\ i\ge0} } \frac{(-1)^{i}}{i!} \ze^i z^{-n-1} \mu_{(n)} \bigl( \N^i (a \otimes b) \bigr) \,.
\end{align*}
Here the sum over $i$ is finite, because $\N$ is locally nilpotent, so $\N^i (a \otimes b)=0$ for $i\gg0$.
For every fixed $a$ and $b$, we also have $a_{(n+\N)}b = 0$ for $n\gg0$.
All these products, together with the action of $\N$, encode the OPE of logarithmic fields as follows:
\begin{equation*}
Y(a,z_1)Y(b,z_2)\sim \sum_{ \substack{i,n\in\ZZ \\ i\ge0} } \frac{(-1)^{i}\log^{i}(z_1-z_2)}{i!(z_{1}-z_{2})^{n+1}} \, 
Y\bigl(\mu_{(n)} (\N^i (a \otimes b)),z_2\bigr)\,.
\end{equation*}
In conformal field theory, it is customary to include only the \emph{singular part} of the OPE, which corresponds to the sum over $n\ge0$
(and $i=0$ as there are no logarithms). In LCFT, the singular part of the OPE involves the sets of indices
$\{i=0,\,n\ge 0\}$ and $\{i\ge1,\,n\in\ZZ\}$, because all logarithmic terms are singular.
Fortunately, the terms with $\log(z_1-z_2)$ are controlled by the action of $\N$.
Once we know the terms with $i=0$ that are determined by the products $a_{(n+\N)}b$, then the terms with $\log^i(z_1-z_2)$
are obtained from taking $(n+\N)$-th products of the vectors appearing in the expression $\N^i (a \otimes b)$.
The fact that such a nested structure exists was one of our main initial observations.


Our next goal was to understand the algebraic relations satisfied by all $(n+\N)$-th products and the braiding map $\N$.
The answer is given by the following logarithmic analogue of the \emph{Borcherds identity} on $V^{\otimes 3}$
for all $k,m,n\in\ZZ$:
\begin{align*}
\sum_{j=0}^\infty &(-1)^{j}\mu_{(m+n-j)}(I\otimes \mu_{(k+j)})\binom{n+\N_{12}}{j}\\
& -\sum_{j=0}^\infty (-1)^{n+j}\mu_{(n+k-j)}(I\otimes \mu_{(m+j)})\binom{n+\N_{12}}{j} (P \otimes I) \\
& =\sum_{j=0}^\infty \mu_{(m+k-j)}( \mu_{(n+j)}\otimes I) \binom{m+\N_{13}}{j}\,,
\end{align*}
where $P\colon V\otimes V\to V\otimes V$ is the transposition of factors (with the appropriate sign in the fermionic case).
Above, we use the standard notation
\begin{equation*}
\N_{12}=\N\otimes I \, , \qquad \N_{23}= I \otimes \N \, , \qquad \N_{13}=(I\otimes P) \N_{12} (I\otimes P) 
\end{equation*}
for the action of $\N$ on the corresponding factors in the tensor power $V^{\otimes 3}$.

Notice that, unlike the case of ordinary vertex algebras, the Borcherds identity for $n=0$ does not give a formula for the commutator of modes.
Instead, in general, one gets a sequence of quadratic relations (see \seref{e4.7}).
Other unexpected new features arise that are peculiar to the logarithmic case. For example, it is possible to have $a,b\in V$ such that the corresponding fields have
a trivial singular OPE $Y(a,z_1)Y(b,z_2) \sim 0$, but nevertheless do not commute, $[Y(a,z_1),Y(b,z_2)] \ne 0$. In other words, even if $\N(a\otimes b)=0$ and
$a_{(j+\N)}b=0$ for all $j\ge0$, one may have some $[a_{(m+\N)},b_{(k+\N)}]\ne0$, due to the presence of $\N_{13}$ in the right-hand side
of the Borcherds identity (see \reref{remnew}). More generally, it is possible to have a subspace of $V$ that is closed under the OPEs, 
which however does not close a subalgebra of $V$. This suggests that one has to work with subspaces of $V$ that are invariant
under all tensor components of the braiding map $\N$.

Let us list the defining properties of the infinitesimal \emph{braiding map} $\N$ in a logarithmic vertex algebra $V$. As already said, $\N$ is locally nilpotent. 
In addition, $\N$ is symmetric (i.e., $\N P=P\N$) and satisfies:
\begin{equation*}
[\N_{12},\N_{23}]=[\N_{13},\N_{23}]=[\N_{12},\N_{13}]=0 \,,
\end{equation*}
\begin{equation*}
\N ({\mu}_{(n)}\otimes I) = ({\mu}_{(n)}\otimes I) (\N_{13}+\N_{23}) \,, \qquad n\in\ZZ\,.
\end{equation*}
The first identity above trivially implies that $\N$ is a solution of the classical Yang--Baxter equation. 
The second identity, which we call the hexagon axiom, together with the symmetry of $\N$ is equivalent to the condition that
\begin{equation*}
\N \in \Der(V) \otimes \Der(V) \,.
\end{equation*}
Our hexagon axiom is similar to the hexagon identity for the braiding map in \emph{quantum vertex algebras} (see \cite{FR, EK, DGK}). 
In fact, much of the theory developed here has parallels for quantum vertex algebras.

In the theory of vertex algebras, the main property of the fields is their \emph{locality} (see \cite{DL, Li, K}):
\begin{equation*}
z_{12}^{N}\,Y(a,z_1)Y(b,z_2) = z_{12}^{N}\, Y(b,z_2)Y(a,z_1)\, , \qquad \z_{12}:=\z_1-\z_2 \,,
\end{equation*}
for some integer $N\ge0$ depending on $a,b\in V$ (and again there is an extra sign factor in the case when $a$ and $b$ are fermions).
For generalized vertex algebras, one allows $N$ to be a complex number, provided that $z_{12}^{N}$ is expanded in the domain $|z_1|>|z_2|$ on the left-hand side
and for $|z_2|>|z_1|$ on the right-hand side (see \cite{DL, M,BK2}). Our definition of locality for logarithmic vertex algebras generalizes this further by replacing $z_{12}^{N}$ with $z_{12}^{N+\N}$, which is interpreted as
\begin{equation*}
z_{12}^{N+\N} = z_{12}^{N} \, e^{\N\log |z_{12}|} 
\end{equation*}
and again expanded in the appropriate domain. More precisely, introduce the formal power series
\begin{align*}
\vartheta_{12}=\zeta_{1}-\sum_{n=1}^\infty \frac{1}{n}z_{1}^{-n} z_{2}^{n} \,, \qquad
\vartheta_{21}=\zeta_{2}-\sum_{n=1}^\infty \frac{1}{n}z_{1}^{n} z_{2}^{-n} \,,
\end{align*}
which can be thought of as expansions of $\log z_{12} = \log z_1+\log(1-\frac{z_2}{z_1})$ for $|z_1|>|z_2|$, and
of $\log z_{21}$ for $|z_2|>|z_1|$, respectively.
Then the \emph{locality axiom} in a logarithmic vertex algebra $V$ states that
for every $a,b\in V$, there exists an integer $N\ge0$ such that for all $c\in V$, 
\begin{align*}
z_{12}^{N} \, Y(z_1)& (I \otimes Y(z_2)) e^{\vartheta_{12}  \N_{12}} (a\otimes b\otimes c )\\
&=z_{12}^{N} \, Y(z_2) (I \otimes Y(z_1)) e^{\vartheta_{21}  \N_{12}} (b\otimes a\otimes c ),
\end{align*}
where $Y(z)(a\otimes b)=Y(a,z)b$. This identity is again reminiscent of what happens for quantum vertex algebras (see \cite{FR, EK, DGK}).
It is possible to generalize the notion of a logarithmic vertex algebra further by allowing non-integral powers of $z$ in the fields and the number $N$
in the locality condition to be non-integer, as for generalized vertex algebras. For simplicity of the exposition, we have restricted to the integer case in the present paper.
Another possible generalization could be to include polylogarithms in the OPEs, as in \cite{AH,V}.

The paper is organized as follows. In Section \ref{s2}, we define the notions of a logarithmic quantum field, braiding map, locality of fields and of spaces of fields, $(n+\NN)$-th products, OPEs, and translation covariance of logarithmic fields. We obtain a generalization of Dong's Lemma (Lemma \ref{l2.12b}), and we show how, under certain assumptions, one can construct a state-field correspondence for logarithmic fields (Theorem \ref{l2.19}). Here $\NN$ is a braiding map on the space of logarithmic fields (see Definition \ref{lm}).

In Section \ref{s3}, we provide two definitions of a logarithmic vertex algebra $V$ (Definitions \ref{d3.1} and \ref{d3.11}).
One is given in terms of a braiding map $\N$ on $V$, and the other in terms of a braiding map $\NN$ on the space of logarithmic fields on $V$
obtained as the image of the state-field correspondence $Y$. 
We prove that the two definitions are equivalent and that the two braiding maps are related by:
\begin{equation*}
\NN = (\ad\otimes\ad)(\N) \,,
\end{equation*}
\begin{equation*}
\N(a\otimes b) = \NN\bigl(Y(a,z_1) \otimes Y(b,z_2) \bigr) (\vac\otimes\vac) \big|_{z_1=z_2=0} \,.
\end{equation*}
This equivalence allows us to apply the results of Section \ref{s2} and obtain analogues of Kac's Existence Theorem (Theorem \ref{t3.14}) and Goddard's Uniqueness Theorem (Theorem \ref{t3.7}).
Then we derive logarithmic versions of the $n$-th product identity (Proposition \ref{pro3.12}), skew-symmetry and associativity (Propositions \ref{pro3.16}, \ref{pro3.17}), Borcherds identity (Theorem \ref{t3.20} and \eqref{borcherds2}) and Jacobi identity (Proposition \ref{pro3.21}). We also define conformal logarithmic vertex algebras and describe the relation between $L_0^{(n)}$ and $\N$ mentioned above (Proposition \ref{pro3.11}).
We finish the section with a discussion of the relation to twisted logarithmic modules.

Finally, in Section \ref{s4}, we present in detail several examples of conformal logarithmic vertex algebras. These include the well-known examples of symplectic fermions \cite{G1,Kau1,Kau2} and free bosons, 
Gurarie--Ludwig's LCFT at $c=0$ from \cite{GL1,GL2,G2}, as well as a new example of a logarithmic vertex algebra associated to an integral lattice. As an application of our Borcherds identity, we derive a complete list of explicit algebraic relations satisfied by the modes of generating fields in Gurarie--Ludwig's example (\leref{lem4.5}), which was previously unknown.

Throughout the paper, we denote by $\ZZ_+$ the set of non-negative integers, and we use the divided-powers notation $x^{(k)}=x^k/k!$ for $k\ge0$; $x^{(k)}=0$ for $k<0$. We will work with vector superspaces over $\CC$. 
For a vector superspace $V=V_{\bar0}\oplus V_{\bar1}$ and a vector $v\in V_{\alpha}$, where $\alpha\in \ZZ/2\ZZ=\{\bar0,\bar1\}$, we denote the parity of $v$ by $p(v)=\alpha$. Unless otherwise specified, all tensor products and $\Hom$'s are over $\CC$.
We denote the identity operator by $I$.

\section{Logarithmic quantum fields}\label{s2}

We start this section with a review of some basic facts and notations on superspaces. Then we define the notions of a logarithmic quantum field, locality, $(n+\NN)$-th products, OPEs, and translation covariance of logarithmic fields.
We prove a generalization of Dong's Lemma, and we show how, under certain assumptions, one can obtain a state-field correspondence for logarithmic fields.

\subsection{Vector superspaces}\label{s2.0}
%
Recall that the tensor product of two superspaces $V=V_{\bar0}\oplus V_{\bar1}$ and $W=W_{\bar0}\oplus W_{\bar1}$ is the superspace $V\otimes W$ with
\begin{equation*}
(V\otimes W)_{\alpha}=\bigoplus_{\beta\in \ZZ/2\ZZ}V_{\beta}\otimes W_{\alpha-\beta}\,,
\qquad \alpha\in \ZZ/2\ZZ = \{\bar0,\bar1\} \,.
\end{equation*}
We denote the transposition operator on the superspace $V\otimes V$ as
\begin{equation}\label{logf1}
P(a\otimes b)=(-1)^{p(a)p(b)}b\otimes a\, , \qquad a, b\in V\, .
\end{equation}
 As customary when writing such identities, by linearity, we assume that the elements $a$ and $b$ are homogeneous with respect to parity.

The vector space $\End(V)$, consisting of all linear operators on $V$, is naturally a superspace with 
\begin{equation*}
\End(V)_{\alpha}=\{\varphi\in \End(V)  \,|\, \varphi V_{\beta}\subset V_{\alpha+\beta} \;\; \forall\beta\in \ZZ/2\ZZ\}\,, \qquad \alpha\in \ZZ/2\ZZ\,.
\end{equation*}
Moreover, on $\End(V)$ we have a product defined by the composition, which will be denoted as
\begin{equation}\label{logf2}
\mu\colon\End(V)\otimes \End(V)\rightarrow \End(V)\,,\qquad \mu(\varphi\otimes\psi)= \varphi\psi \,.
\end{equation}
This makes $\End(V)$ an associative superalgebra. For any associative superalgebra $A$, the commutator bracket is defined by 
\begin{equation*}
[a,b]=ab-(-1)^{p(a)p(b)}ba\,, \qquad a,b\in A\,,
\end{equation*}
and it defines on $A$ the structure of a Lie superalgebra. We will use the standard notation $\ad(a)b=[a,b]$.
Then for $a,b\in A$ we have the endomorphism  
\[(\ad\otimes\ad)(a\otimes b) = \ad (a)\otimes \ad (b)\in \End(A)\otimes \End(A)\, , \] 
which is defined by 
\begin{equation}\label{logf3}
\left(\ad (a)\otimes \ad (b)\right)(c\otimes d) =(-1)^{p(b)p(c)}[a,c]\otimes [b,d]\,, \qquad c,d\in A\, . 
\end{equation}

In this paper, we will work with even endomorphisms 
\[\N\in \End(V)\otimes \End(V)\, .\] 
We can write $\N$ as a finite sum
\begin{equation}\label{logf-n}
\N=\sum_{i=1}^{L}\phi_{i}\otimes  \psi_{i}\, , \qquad \phi_{i}, \psi_{i}\in \End(V) \,;
\end{equation}
then the condition $p(\N)=\bar0$ means that $p(\phi_i)=p(\psi_i)$ for all $i$.
The action of $\N$ on $V\otimes V$ is given by:
\begin{equation}\label{logf-nact}
\N(a\otimes b)=\sum_{i=1}^{L}(-1)^{p(\psi_{i})p(a)}\phi_{i}(a)\otimes  \psi_{i}(b)\, , \qquad a,b\in V\, .
\end{equation}
We say that $\N$ is \emph{locally nilpotent} on $V \otimes V$ if for all $a,b\in V$ there exists a positive integer $r$ such that  
$\N^r (a\otimes b) = 0$.

Given $\N$ as above, we define the endomorphisms  $\N_{12}, \N_{23},\N_{13}\in \End(V)\otimes \End(V)\otimes \End(V)$ as follows:
\begin{equation}\label{logf-1}
\begin{split}
\N_{12}&=\N\otimes I = \sum_{i=1}^{L} \phi_{i} \otimes \psi_{i} \otimes I \, , \\
\N_{23}&=I\otimes\N = \sum_{i=1}^{L} I \otimes \phi_{i} \otimes \psi_{i} \, , \\
\N_{13}&=(I\otimes P) \N_{12} (I\otimes P) = \sum_{i=1}^{L} \phi_{i} \otimes I \otimes \psi_{i} \,,
\end{split}
\end{equation}
where $I$ denotes the identity operator and $P$ is the transposition \eqref{logf1} on $V\otimes V$.

\subsection{Logarithmic quantum fields}\label{s2.1}

Throughout the paper, we will denote by $\z,\z_1,\z_2,\dots$ and $\ze,\ze_1,\ze_2,\dots$ commuting even formal variables,
and we will use the notation $z_{ij}=z_i-z_j$.
The variables $\ze,\ze_1,\ze_2,\dots$ will be thought of as $\ze=\log\z$ and $\ze_i=\log\z_i$. As in \cite{B}, we introduce the operators
\begin{equation*}
D_\z = \d_\z+\z^{-1} \d_\ze\,, \qquad D_{\z_i} = \d_{\z_i}+\z_i^{-1} \d_{\ze_i}\,.
\end{equation*}
Note that $D_\z$ is the total derivative with respect to $\z$ if we set $\ze=\log\z$.
We will also need the formal power series
\begin{equation}\label{logf7}
\vartheta_{12}:=\zeta_{1}-\sum_{n=1}^\infty \frac{1}{n}z_{1}^{-n} z_{2}^{n} \,,
\end{equation}
which can be thought of as an expansion of $\log z_{12} = \log z_1+\log(1-\frac{z_2}{z_1})$.
Similarly, we define
\begin{equation}\label{logf8}
\vartheta_{21}:=\zeta_{2}-\sum_{n=1}^\infty \frac{1}{n}z_{1}^{n} z_{2}^{-n} \,,
\end{equation}
which is an expansion of $\log z_{21}$.

\begin{remark}\label{r-ze}
Note that $D_{z_1} \vartheta_{12}$ is the expansion of $z_{12}^{-1}$ in the domain $|z_1|>|z_2|$, while 
$D_{z_1} \vartheta_{21}$ is the expansion of $z_{12}^{-1}$ in the domain $|z_2|>|z_1|$. This implies that
\begin{equation}\label{logf9}
z_{12} D_{z_1} \vartheta_{12} = z_{12} D_{z_1} \vartheta_{21} =1 \,,
\end{equation}
and
\begin{equation}\label{logf10}
\de(z_1,z_2) = D_{z_1} (\vartheta_{12}-\vartheta_{21}) = \sum_{n\in\ZZ} z_{1}^{-n-1} z_{2}^{n}
\end{equation}
is the formal \emph{delta function}. 
\end{remark}

Now we introduce the notion of a logarithmic field, which is a central object of our study.

\begin{definition}\label{d2.1}
Let $V$ be a vector superspace.
The superspace 
\begin{equation*}
\LF(V) = \LF(V)_{\bar0} \oplus \LF(V)_{\bar1}
\end{equation*}
of \emph{logarithmic (quantum) fields} is defined by
\begin{equation*}
\LF(V)_\alpha = \Hom(V_{\bar0},V_{\alpha}(\!(\z)\!)[\ze]) \oplus \Hom(V_{\bar1},V_{\alpha+\bar1}(\!(\z)\!)[\ze]) \,,
\qquad \al\in\ZZ/2\ZZ \,.
\end{equation*}
In particular, when $V$ is a (purely even) vector space, we have
\begin{equation*}
\LF(V) = \Hom(V,V(\!(\z)\!)[\ze]) \,.
\end{equation*}
We denote the elements of $\LF(V)$ as $a(z,\ze)$ or $a(\z)$ for short. 
\end{definition}

In the above definition, $V(\!(\z)\!)[\ze] = V[\![z]\!][z^{-1},\ze]$ stands for the space of polynomials in $\ze$ whose coefficients are
formal Laurent series in $z$ (i.e., formal power series in $z$ and $z^{-1}$ with finitely many negative powers of $z$).
When expanded, a logarithmic field $a(z,\ze)$ can be viewed as a formal power series in $z^{\pm1}$ and $\zeta$ (with uniformly bounded powers of $\ze$) with coefficients in $\End(V)$.
If $a(z,\ze)$ has parity $p(a)$, then all coefficients in its expansion have the same parity $p(a)$.

\begin{remark}
It is possible to consider more general logarithmic fields, which are formal power series in $\ze$ (cf.\ \cite{B,BS2}), and many of our results generalize to this case.
As in \cite{B}, the condition that the logarithmic fields are polynomials in $\ze$ is a consequence of the local nilpotency of the operator $\N$ defined below
(see Definition \ref{lm}).
\end{remark}

We can compose two logarithmic fields $a,b\in\LF(V)$ only if they depend on different formal variables.
This defines a linear map 
 \begin{equation*}
 a(z_{1})b(z_{2}) \colon V\rightarrow V(\!(z_{1})\!)[\zeta_{1}](\!(z_{2})\!)[\zeta_{2}]\, .
 \end{equation*}
 Note that $V(\!(z_{1})\!)[\zeta_{1}](\!(z_{2})\!)[\zeta_{2}]$ and $V(\!(z_{2})\!)[\zeta_{2}](\!(z_{1})\!)[\zeta_{1}]$ are two different subspaces of $V[\![\z_{1}^{\pm1},\z_{2}^{\pm1},\ze_1,\ze_2]\!]$, 
 whose intersection is $V[\![\z_1,\z_2]\!] [\z_1^{-1}, \z_2^{-1},\ze_1,\ze_2]$.
 
As for fields in vertex algebras, the product $a(z)b(z)$ is not well defined in general, and one has to do a normal ordering of the product.
Every logarithmic field $a\in\LF(V)$ can be expanded as
\begin{equation*}
a(z,\ze) = \sum_{n\in\ZZ} a_n(\ze) \z^{-n-1} \,, \qquad a_n(\ze) \in\Hom(V,V[\ze]) \,,
\end{equation*}
where for each $v\in V$ we have $a_n(\ze)v=0$ for sufficiently large $n$.
The \emph{annihilation} and \emph{creation parts} of $a(\z)$ are defined respectively as
\begin{align*}
a(\z)_- &= a(z,\ze)_- = \sum_{n\ge0} a_n(\ze) \z^{-n-1} \,, \\
a(\z)_+ &= a(z,\ze)_+ = \sum_{n<0} a_n(\ze) \z^{-n-1} \,.
\end{align*}
 The \emph{normally ordered product} of two logarithmic fields $a,b\in \LF(V)$ is defined as
 \begin{equation*}
 {:}a(z)b(z){:}=a(z)_{+} b(z)+(-1)^{p(a)p(b)}b(z)a(z)_{-} \, .
 \end{equation*}
Note that the normally ordered product ${:}a(z)b(z){:}$ is again a logarithmic field, because for $v\in V$, we have:
\begin{equation*}
a(z)_{+}v \in V[\![z]\!][\ze] \,, \qquad a(z)_{-}v \in V[z^{-1},\ze] \,,
\end{equation*}
which imply that
\begin{equation*}
a(z)_{+}b(z)v\in V(\!(z)\!)[\ze] \,, \qquad b(z)a(z)_{-}v\in V (\!(z)\!)[\ze] \,.
\end{equation*}
 

\subsection{Locality of logarithmic fields}\label{s2.1l}

Recall that, for ordinary vertex algebras, the locality of a 
pair of fields $a,b$ (assumed homogeneous with respect to parity)
is the condition
\begin{equation}\label{logf4}
z_{12}^{N}\,a(z_1)b(z_2)=(-1)^{p(a)p(b)}z_{12}^{N}\,b(z_2)a(z_1)\, , \qquad \z_{12}:=\z_1-\z_2 \,,
\end{equation}
for some integer $N\ge0$ (see  \cite{DL, Li, K}).
For generalized vertex algebras, one allows the power $N$ to be non-integral, where $z_{12}^{N}$ is expanded in the domain $|z_1|>|z_2|$
in the left-hand side of \eqref{logf4} and for $|z_1|<|z_2|$ in the right-hand side (see \cite{DL,M,BK2}).

For logarithmic vertex algebras, we will generalize \eqref{logf4} further, by replacing $N$ with a suitable linear operator acting on tensor products
of logarithmic fields. To this end, we introduce the following notion, which will be important throughout the paper.


\begin{definition}\label{lm}
Let $U$ be a vector superspace. An (infinitesimal) \emph{braiding map} $\N$ on $U$ is an even linear operator
\begin{equation*}
\N\in \End(U)\otimes \End(U)\, ,
\end{equation*}
satisfying the following two conditions:
\begin{enumerate}
%
\item\label{2.4-2}
$\N$ is \emph{symmetric}, i.e., $\N P=P \N\,$ where $P$ is the transposition \eqref{logf1};

\smallskip
\item\label{2.4-3}
$[\N_{12},\N_{23}]=[\N_{13},\N_{23}]=[\N_{12},\N_{13}]=0$, where $\N_{ij}$ are as in \eqref{logf-1}.
\end{enumerate}
\end{definition}

Now we state the definition of locality for logarithmic fields and we provide one example; more examples are given in Section \ref{s4}. 

\begin{definition}\label{d2.3}
Let $\V$ be a subspace of $\LF(V)$, and $\NN$ be a braiding map on $\V$, which is \emph{locally nilpotent}, 
i.e., for any $a,b\in\V$ there exists a positive integer $r$ (depending on $a,b$) such that  $\NN^r (a\otimes b) = 0$.
Two logarithmic fields $a,b\in \V$ are called $\NN$-\emph{local} (or just \emph{local} for short) if 
\begin{equation}\label{logf11}
\mu\bigl(\z_{12}^N\,e^{\vartheta_{12}  \NN} a(\z_1) \otimes b(\z_2)\bigr) =(-1)^{p(a)p(b)}\mu\bigl( \z_{12}^N\,e^{\vartheta_{21} \NN}  b(\z_2)\otimes a(\z_1)\bigr) \,,
\end{equation}
for some integer $N\ge0$ (depending on $a$ and $b$). 
\end{definition}


By abuse of notation, we will sometimes write for short 
$e^{\vartheta_{12}  \NN} a(\z_1) b(\z_2)$ instead of $\mu\bigl(e^{\vartheta_{12}  \NN} a(\z_1) \otimes b(\z_2)\bigr)$.
Furthermore, we will denote
\begin{equation}\label{z12N}
\z_{12}^N\,e^{\vartheta_{12} \NN}=\z_{12}^{N+\NN} \,, \qquad
\z_{12}^N\,e^{\vartheta_{21} \NN}=\z_{21}^{N+\NN} \,,
\end{equation}
provided that $N$ is even so that $\z_{12}^N = \z_{21}^N$.
Then the locality condition \eqref{logf11} can be reformulated so that it resembles \eqref{logf4}:
\begin{equation}\label{logf12}
\z_{12}^{N+\NN}a(\z_{1})b(\z_2)=(-1)^{p(a)p(b)}\z_{21}^{N+\NN}b(\z_{2})a(\z_1) 
\end{equation}
for some $N\in 2\ZZ_+$.
When necessary for clarity, we use the notation \eqref{logf11}.

Another way to rewrite \eqref{logf11} is
\begin{equation*}
\mu\bigl(\z_{12}^N\,e^{\vartheta_{12}  \NN} (a \otimes b)(z_1,z_2)\bigr) =(-1)^{p(a)p(b)}\mu\bigl( \z_{12}^N\,e^{\vartheta_{21} \NN}  (b \otimes a)(z_2,z_1)\bigr) \,,
\end{equation*}
where we use the notation
\begin{equation}\label{logf2p}
(a \otimes b)(z_1,z_2) = a(z_1) \otimes b(z_2) \,, \qquad a,b\in\LF(V) \,.
\end{equation}
Then \eqref{logf11} is equivalent to the equation
\begin{equation}\label{logf3p}
\mu\bigl(\z_{12}^N\,e^{\vartheta_{12}  \NN} \al(z_1,z_2)\bigr) = \mu\bigl( \z_{12}^N\,e^{\vartheta_{21} \NN}  (P\al)(z_2,z_1)\bigr) \,,
\end{equation}
for $\al=a \otimes b$, where we used the transposition $P$ from \eqref{logf1} acting on $\V \otimes \V$.
The advantage of \eqref{logf3p} is that it can be extended by linearity to other elements $\al\in\V \otimes \V$,
and the set of such $\al$ is a subspace of $\V\otimes\V$.

\begin{remark}\label{rsymloc}
The symmetry of $\NN$ implies that if $\al\in\V \otimes \V$ satisfies \eqref{logf3p}, then \eqref{logf3p} holds with
$P\al$ in place of $\al$.
In particular, if a pair of logarithmic fields $a,b\in\V$ is $\NN$-local, then the pair $b,a$ is $\NN$-local.
\end{remark}

\begin{example}[Free boson]\label{ex2.5}  
Let $V=\mathbb{C}[x_{0},x_{1}, x_2,\dots]$, and $x(z)$ be the logarithmic field  
\begin{equation*}
x(z) = \sum_{n=0}^\infty x_n z^n + \partial_{x_0} \ze + \sum_{n=1}^\infty \partial_{x_n} \frac{z^{-n}}{-n} \,.
\end{equation*}
Then it is easy to see that 
\begin{equation*}
x(z_1)x(z_2)=\vartheta_{12}+{:}x(z_1)x(z_2){:} \,.
\end{equation*}
Letting $\NN=-\ad(\partial_{x_0})\otimes \ad(\partial_{x_0})$ and $\V=\Span\{x(z), I\}$ where $I$ denotes the identity operator, we find that
\begin{equation*}
\NN \bigl(x(z_1) \otimes x(z_2)\bigr) = -I \otimes I \,.
\end{equation*}
 Hence
\begin{align*}
e^{\vartheta_{12}\NN}x(z_1)x(z_2)
&=x(z_1)x(z_2)-\vartheta_{12} \\
&={:}x(z_1)x(z_2){:}
={:}x(z_2)x(z_1){:} =e^{\vartheta_{21}\NN}x(z_2)x(z_1)\,,
\end{align*}
which means that the field $x(z)$ is $\NN$-local with respect to itself. 
Additionally, we have 
\begin{equation*}
\NN \bigl(x(z_1) \otimes I\bigr) = \NN \bigl(I \otimes I\bigr)=0  \,,
\end{equation*}
which trivially implies that $x,I$ and $I,I$ are $\NN$-local. 
\end{example}

\subsection{Local spaces of logarithmic fields}\label{s2.1ls}
From now on, for simplicity, we will assume that $\NN$ is a braiding map on the whole space $\LF(V)$, i.e.,
\begin{equation}\label{logf14-1}
\NN\in \End(\LF(V))\otimes \End(\LF(V))\,.
\end{equation} 
Thus, we can write $\NN$ as a finite sum
\begin{equation}\label{logf14}
\NN=\sum_{i=1}^{L}\Phi_{i}\otimes  \Psi_{i}\,, \qquad \Phi_{i}, \Psi_{i}\in \End(\LF(V)) \,, \quad p(\Phi_i)=p(\Psi_i) \,.
\end{equation}
Without loss of generality, we will assume that $\Phi_1,\dots,\Phi_L$ are linearly independent
and $\Psi_1,\dots,\Psi_L$ are linearly independent.
Then conditions \eqref{2.4-2} and \eqref{2.4-3} from Definition \ref{lm} are equivalent to the identities:
\begin{equation}\label{logf18s}
\NN=\sum_{i=1}^{L}\Phi_{i}\otimes  \Psi_{i}=\sum_{i=1}^{L} (-1)^{p(\Phi_i)} \Psi_{i}\otimes  \Phi_{i}
\end{equation}
and
\begin{equation}\label{logf18e}
[\Phi_i, \Psi_j] = [\Phi_i, \Phi_j] = [\Psi_i, \Psi_j] = 0
\,,\qquad 1\le i,j\le L\,,
\end{equation}
respectively. 
Note, however, that the operator $\NN$ will be locally nilpotent not on the whole $\LF(V) \otimes \LF(V)$ but only on $\V\otimes\V$
for some subspace $\V\subset\LF(V)$. 

\begin{remark}
The assumptions \eqref{logf14-1}--\eqref{logf18e} are satisfied in all examples that we know (cf.\ Section \ref{s4} below), and in all of them $\NN$ has the form (see \eqref{logf3}):
\begin{equation}\label{eq2.34}
\NN=\sum_{i=1}^{L} \ad (\phi_{i})\otimes \ad(\psi_{i})\,, \qquad \phi_{i},\psi_{i}\in \End(V)\,.
\end{equation}
\end{remark}

Let $\V$ be a subspace of $\LF(V)$. We say that $\V$ is \emph{component-wise $\NN$-invariant} if
\begin{equation}\label{logf-ls3}
\Phi_i \V \subset\V \,, \quad \Psi_i \V \subset\V \,, \qquad  1\le i\le L \,.
\end{equation} 
In particular, this implies $\NN(\V\otimes\V) \subset \V\otimes\V$, and it makes sense to talk about the restriction of $\NN$ to $\V\otimes\V$.

\begin{definition}\label{d2.5}
Let $\NN$ be a braiding map on $\LF(V)$.
We will say that a subspace $\V\subset\LF(V)$ is $\NN$-\emph{local} (or just \emph{local} when $\NN$ is fixed) if the following three conditions hold:

\begin{enumerate}
\item\label{2.5-1}
$\V$ is component-wise $\NN$-invariant; 

\item\label{2.5-2}
the restriction of $\NN$ to $\V\otimes\V$ is locally nilpotent; 

\item\label{2.5-3}
every two fields $a,b\in \V$ are $\NN$-local.
\end{enumerate}
\end{definition}

Note that, by linearity, if $\V$ is a local space as in Definition \ref{d2.5},  then all elements $\al\in \V \otimes \V$ are $\NN$-local in the sense of
\eqref{logf3p}.
In particular, $\NN^i (a\otimes b)$ is local
for all $a, b\in \V$ and $i\in\ZZ_{+}$; hence, there exists an integer $N\in\ZZ_{+}$ such that
\begin{equation}\label{logf15}
\begin{split}
&\mu\bigl(z_{12}^{N}e^{\vartheta_{12}\NN}\NN^{i} a(z_{1})\otimes b(z_{2})\bigr)\\
&\qquad =(-1)^{p(a)p(b)}\mu\bigl(z_{12}^{N}e^{\vartheta_{21}\NN}\NN^{i} b(z_{2})\otimes a(z_{1})\bigr)\, .
\end{split}
\end{equation}
By abuse of notation, we will sometimes write for short 
$e^{\vartheta_{12}  \NN}\NN^{i}  a(\z_1) b(\z_2)$ instead of $\mu\bigl(e^{\vartheta_{12}  \NN}\NN^{i}  a(\z_1) \otimes b(\z_2)\bigr)$.
Then the locality condition \eqref{logf15} can be reformulated as
\begin{equation*}
\z_{12}^{N+\NN}\NN^{i} a(\z_{1})b(\z_2)=(-1)^{p(a)p(b)}\z_{21}^{N+\NN}\NN^{i} b(\z_{2})a(\z_1) 
\end{equation*}
for some $N\in 2\ZZ_+$. Since $\NN$ is nilpotent on $a\otimes b$, we can pick one $N$ that works for all $i\in\ZZ_{+}$.

In the following lemmas, we will extend a local space $\V$ to a larger space of logarithmic fields that includes the identity field and all the derivatives of the fields in $\V$. 
Next, in Section \ref{sdong} below, we will obtain a larger extension containing all $(n+\NN)$-th products of the quantum fields in $\V$ (cf.\ Definition \ref{d2.10} below). 

Let $\V_{I}=\V+\CC I$, where $I$ is the identity operator considered as a field.

\begin{lemma}\label{lem2.6} 
Let\/ $\NN$ be a braiding map on\/ $\LF(V)$, and\/ $\V\subset \LF(V)$ be an\/ $\NN$-local space.
Suppose that
\begin{equation}\label{1.2}
\NN(a\otimes I)=\NN(I\otimes a)=0 \quad\text{for all}\;\; a\in\LF(V)\, .
\end{equation}
Then\/ $\V_{I}$ is an\/ $\NN$-local space.
\end{lemma}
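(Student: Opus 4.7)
The plan is to verify the three conditions of Definition~\ref{d2.5} for $\V_I=\V+\CC I$. The crucial preliminary step is to upgrade the global hypothesis $\NN(I\otimes a)=\NN(a\otimes I)=0$ into the pointwise statement $\Phi_i I=\Psi_i I=0$ for each $i\in\{1,\dots,L\}$; once this is in hand, the three conditions follow quickly by bilinearity. The main obstacle I expect is this extraction, which is the only place that uses real input beyond bookkeeping.

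To carry out the extraction I would use the assumed linear independence of $\{\Phi_i\}$ and $\{\Psi_i\}$ stated just before \eqref{logf18s}. From $\NN(I\otimes a)=\sum_i\Phi_i(I)\otimes\Psi_i(a)=0$ (no parity sign appears because $I$ is even), I would pick a basis $\{v_1,\dots,v_k\}$ of $\Span\{\Phi_j(I)\}_j$, expand $\Phi_i(I)=\sum_jc_{ij}v_j$, and read off the coefficient of each $v_j$ in the first tensor slot to get $\sum_ic_{ij}\Psi_i(a)=0$ for every $a\in\LF(V)$. Hence $\sum_ic_{ij}\Psi_i=0$ as an operator on $\LF(V)$, and linear independence of the $\Psi_i$ forces $c_{ij}=0$, giving $\Phi_i I=0$. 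Rewriting $\NN$ via the symmetry relation \eqref{logf18s} and repeating the same argument yields $\Psi_i I=0$.

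With those vanishings, component-wise $\NN$-invariance of $\V_I$ is immediate: $\Phi_i\V_I=\Phi_i\V+\CC\Phi_iI\subset\V\subset\V_I$, and similarly for $\Psi_i$. For local nilpotency on $\V_I\otimes\V_I$, I would decompose any pure tensor into the four types $a\otimes b$, $a\otimes I$, $I\otimes b$, $I\otimes I$ with $a,b\in\V$; the last three are annihilated by a single application of $\NN$, so $\NN^r$ acts only on the $\V\otimes\V$ summand, where local nilpotency on $\V\otimes\V$ supplies an appropriate $r$. For locality, pairs in $\V\times\V$ are $\NN$-local by hypothesis and $(I,I)$ is trivially local; the only new cases are $(a,I)$ and $(I,a)$ with $a\in\V$. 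Since $\NN$ annihilates $a(z_1)\otimes I$ and $I\otimes a(z_1)$, the exponentials $e^{\vartheta_{12}\NN}$ and $e^{\vartheta_{21}\NN}$ act as the identity on these tensors, and composing via $\mu$ collapses both sides of \eqref{logf11} to $a(z_1)$; thus $N=0$ works and locality holds.
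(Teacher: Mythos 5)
Your proof is correct. The paper offers no argument for this lemma (its proof reads ``This is obvious from the definitions''), and your verification is the natural one: the only substantive point, extracting $\Phi_i I=\Psi_i I=0$ from $\NN(I\otimes a)=\NN(a\otimes I)=0$ using the standing assumption that the $\Phi_i$ and the $\Psi_i$ in \eqref{logf14} are linearly independent, is carried out correctly, and the remaining checks of the three conditions of Definition \ref{d2.5} for $\V_I$ (invariance, one-step annihilation of tensors involving $I$, and locality of the pairs $(a,I)$, $(I,a)$, $(I,I)$ with a common $N$ chosen by bilinearity) are exactly the routine bookkeeping you describe.
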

\begin{proof}  
This is obvious from the definitions.
\end{proof}


Because of the above lemma, from now on we will always assume that $I\in\V$.
Now let $\V_{D}=\CC[D_z]\V$ be the minimal $D_{z}$-invariant subspace of $\LF(V)$ containing the space $\V$. 

\begin{lemma}\label{l2.9} 
Let\/ $\NN$ be a braiding map on\/ $\LF(V)$ such that
\begin{equation}\label{logfdn} 
\bigl[D_{z}\otimes I,\NN\bigr] = \bigl[I\otimes D_{z},\NN\bigr] = 0\ \quad \text{on} \quad \LF(V) \otimes \LF(V) \,.
\end{equation}
Then for every\/ $\NN$-local space\/ $\V\subset \LF(V)$, the space\/ $\V_D$ is\/ $\NN$-local.
\end{lemma}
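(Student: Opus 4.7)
The plan is to verify the three conditions of Definition \ref{d2.5} for $\V_D = \CC[D_z]\V$ in turn, with most of the work concentrated in the locality axiom.

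First I would translate the hypothesis into pointwise commutation statements. Writing $\NN=\sum_{i=1}^L \Phi_i\otimes\Psi_i$ with both $\{\Phi_i\}$ and $\{\Psi_i\}$ linearly independent (as arranged after \eqref{logf14}), the identity $[D_z\otimes I,\NN]=0$ expands to $\sum_i [D_z,\Phi_i]\otimes\Psi_i=0$, forcing $[D_z,\Phi_i]=0$ for every $i$. The analogous argument applied to $[I\otimes D_z,\NN]=0$ gives $[D_z,\Psi_i]=0$. Consequently each $\Phi_i$ and $\Psi_i$ commutes with every $D_z^m$, and $\NN$ commutes with $D_z^m\otimes D_z^n$ on $\LF(V)\otimes\LF(V)$.

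With this in hand, conditions \eqref{2.5-1} and \eqref{2.5-2} are short. For \eqref{2.5-1}, $\Phi_i(D_z^m a)=D_z^m \Phi_i(a)\in D_z^m \V\subset\V_D$ and similarly for $\Psi_i$, since $\V$ is itself component-wise $\NN$-invariant. For \eqref{2.5-2}, any element of $\V_D\otimes\V_D$ is a sum of $D_z^m a\otimes D_z^n b$ with $a,b\in\V$, and
\[
\NN^r (D_z^m a\otimes D_z^n b) = (D_z^m\otimes D_z^n)\,\NN^r(a\otimes b),
\]
which vanishes for $r$ large enough by local nilpotency of $\NN$ on $\V\otimes\V$.

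The substance is condition \eqref{2.5-3}. By iteration and the symmetry observation in Remark \ref{rsymloc}, it is enough to prove that if $a,b\in\LF(V)$ are $\NN$-local then so are $D_z a, b$. Starting from \eqref{logf3p} for $\al=a\otimes b$ at some even $N$, I would multiply through by $z_{12}^2$ (so locality still holds at $N+2$) and apply $D_{z_1}$ to both sides. Using that $D_{z_1}$ commutes with $\NN$ and acts as a derivation on the scalar factors, each side breaks into three contributions; the two terms coming from $D_{z_1}$ hitting the exponential involve the factor $z_{12}^{N+2}D_{z_1}\vartheta_{12}$ (resp.\ $z_{12}^{N+2}D_{z_1}\vartheta_{21}$), which collapses to $z_{12}^{N+1}$ by \eqref{logf9} on both sides. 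The residual $(N+2)z_{12}^{N+1}$-terms then cancel across the two sides by locality of $\al$ at level $N+1$, and the terms carrying an extra factor of $\NN$ cancel by locality of $\NN\al$ at level $N+1$ (available from \eqref{logf15} combined with $\NN P=P\NN$). What is left is precisely the locality identity for the pair $D_z a, b$ at level $N+2$.

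The main obstacle is the bookkeeping in this last step: making sure that the $\vartheta$-derivative pieces are correctly paired and that the cancellations use only locality of elements already known to lie in $\V\otimes\V$ (namely $\al$ and $\NN\al$, both available thanks to component-wise $\NN$-invariance of $\V$). Once this is confirmed, induction on the total degree in $D_z$, together with the symmetry in Remark \ref{rsymloc}, extends $\NN$-locality to arbitrary pairs $D_z^m a, D_z^n b$, completing condition \eqref{2.5-3}.
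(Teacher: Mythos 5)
Your proposal is correct and follows essentially the same route as the paper's proof: components (1) and (2) of Definition \ref{d2.5} via $[D_z,\Phi_i]=[D_z,\Psi_i]=0$, and for locality, applying $D_{z_1}$ to the locality identity at a raised power of $z_{12}$ and cancelling the extra terms (the scalar term by locality of $\al$ and the $\NN$-term by locality of $\NN\al$, both guaranteed since $\V$ is $\NN$-local and component-wise invariant), then iterating. The only cosmetic difference is that you raise the exponent by $2$ to preserve evenness where the paper simply uses $N+1$.
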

\begin{proof} 
Assumption \eqref{logfdn} is equivalent to:
\[
[D_z,\Phi_i] = [D_z,\Psi_i] = 0 \,, \qquad 1\le i\le L\, .
\]
It is obvious that if $\V$ satisfies conditions \eqref{2.5-1}, \eqref{2.5-2} from Definition \ref{d2.5}, then $\V_D$ satisfies the same conditions.
It remains to prove that any two fields from $\V_D$ are local. 

Suppose that $\al(z_1,z_2) \in\V_D\otimes\V_D$ is such that all $\NN^i\al$ ($i\in\ZZ_+$) are local in the sense of \eqref{logf3p}.
Applying the derivative $D_{z_1}$ to both sides of \eqref{logf3p} with $N+1$ in place of $N$, we obtain using \eqref{logf9} that 
\begin{align*}
&z_{12}^{N+1+\NN}D_{z_1} \al(z_1,z_2) + (N+1+\NN)z_{12}^{N+\NN} \al(z_1,z_2) \\
& \;\;= z_{12}^{N+1+\NN} D_{z_1} (P\al)(z_2,z_1) + (N+1+\NN) z_{12}^{N+\NN} (P\al)(z_2,z_1) \,.
\end{align*}
Taking $N$ sufficiently large so that \eqref{logf3p} is satisfied for both $\al$ and $\NN\al$, we obtain that the second terms on the left and right side above cancel each other. 
Hence, $D_{z_1}\al$ is $\NN$-local. By symmetry and repeating the same argument, we get that all $D_{z_1}^m D_{z_2}^n \al$ are local for $m,n\in\ZZ_+$.

Starting from $a,b\in\V$, the above claim for $\al(z_1,z_2)=a(z_1) \otimes b(z_2)$ implies that the fields
$D_z^m a$ and $D_z^n b$ are local for all $m,n\in\ZZ_+$.
\end{proof}

\subsection{$(n+\NN)$-products}\label{s2.2} 
In this subsection, following \cite{B}, we define products of local logarithmic fields.
As before, let $\NN$ be a braiding map on $\LF(V)$ (see \eqref{logf14-1}--\eqref{logf18e}),
and let $\V$ be an $\NN$-local subspace of $\LF(V)$, as in \deref{d2.5}.
Observe that, since $a,b\in\V$ are $\NN$-local, both sides of \eqref{logf11} are linear maps from $V$ to $V[\![\z_1,\z_2]\!] [\z_1^{-1}, \z_2^{-1}, \ze_1,\ze_2]$; 
hence the following definition makes sense. 

\begin{definition}\label{d2.10} Let
$a, b\in \V$ be two local logarithmic fields, and $N$ be from \eqref{logf11}.
For $n\in\ZZ$, the \emph{$(n+\NN)$-th product} $a_{(n+\NN)}b$ is defined by ($v\in V$):
\begin{equation}\label{logf16}
\begin{split}
\bigl(a(&z)_{(n+\NN)}b(z)\bigr)v \\
&= D_{\z_1}^{(N-1-n)}\mu \Bigl(\z_{12}^Ne^{\vartheta_{12}\NN}  
a(\z_1,\ze_1)\otimes  b(\z_2,\ze_2) \Bigr)v\Big|_{ \substack{\z_1=\z_2=\z \\ \ze_1=\ze_2=\ze} }\,.
\end{split}
\end{equation}
From now on, to simplify the notation, we will suppress the dependence on $\ze$ and understand that setting $\z_1=z$ automatically sets $\ze_1=\ze$.
\end{definition}

Note that, by definition, $a_{(n+\NN)}b=0$ for $n\ge N$, since $D_{\z_1}^{(k)} := 0$ for $k<0$.
It is easy to check that $a_{(n+\NN)}b$ is again a logarithmic field with parity $p(a_{(n+\NN)}b)=p(a)+p(b)$, and it does not depend on the choice of $N$ satisfying \eqref{logf11}. 

More generally, for any local element $\al\in\V\otimes\V$ satisfying \eqref{logf3p}, $n\in\ZZ$ and $v\in V$, we let
\begin{equation}\label{logf17-al}
(\hat{\mu}_{(n)}\al)(z)v =  D_{\z_1}^{(N-1-n)}\mu \Bigl(\z_{12}^Ne^{\vartheta_{12}\NN} \al(\z_1,\z_2) \Bigr)v\Big|_{\z_1=\z_2=\z}\,.
\end{equation}
This defines an even linear map
\begin{equation}\label{logf17-mu}
\hat{\mu}_{(n)} \colon\V\otimes \V\rightarrow \LF(V)\,, \qquad 
\hat{\mu}_{(n)}(a\otimes b) = a_{(n+\NN)}b\,.
\end{equation}
In particular, we will use the notation
\begin{equation}\label{logf17}
\NN^{i}a_{(n+\NN)}b = \hat{\mu}_{(n)}\bigl(\NN^{i}(a\otimes b)\bigr),
\qquad i\in\ZZ_+ \,, \;\; n\in\ZZ \,.
\end{equation}
Notice that, since $\NN$ is locally nilpotent, we have $\NN^r(a\otimes b)=0$ for some $r\ge0$ and
\begin{equation}\label{logf17z}
\NN^{i} a(z)_{(n+\NN)}b(z) = 0 \qquad\text{if \;$i\ge r$ \; or \;$n\ge N$} \,.
\end{equation}


It is easy to check that the $(n+\NN)$-th products satisfy the following properties:
\begin{align}
\label{2.29}
a_{(n+\NN)} I &= 0\,, & n&\ge0\,,\\
\label{2.30}
a_{(-n-1+\NN)} I &= D_z^{(n)} a \,, & n&\ge0\,,\\
\label{2.31}
I_{(m+\NN)} a &= \de_{m,-1} a\,, & m&\in\ZZ \,.
\end{align}
Furthermore, using the Leibniz rule and \eqref{logf9}, we obtain: 
\begin{align}
(D_{z}a)_{(n+\NN)}b&=-(n+\NN\,)a_{(n-1+\NN)}b\, ,\\
D_{z}(a_{(n+\NN)}b)&=(D_{z}a)_{(n+\NN)}b+a_{(n+\NN)}(D_{z}b)\,,\\
\d_{\ze}(a_{(n+\NN)}b)&=(\d_{\ze}a)_{(n+\NN)}b+a_{(n+\NN)}(\d_{\ze}b)+\NN a_{(n+\NN)}b\,.
\end{align}

\subsection{Dong's Lemma}\label{sdong}
In this subsection, we generalize Dong's Lemma (see, e.g., \cite{K}) to the case of logarithmic fields.
We continue using the notation from the previous subsection.

Let $\V$ be an $\NN$-local subspace of $\LF(V)$ such that the identity field $I \in \V$. 
We denote by $\V^{1}$ the minimal subspace of $\LF(V)$ containing $\hat{\mu}_{(n)}(\V\otimes \V)\subset \LF(V)$ for all $n\in \ZZ$, i.e.,
\begin{equation}\label{logf-v1}
\V^{1}=\sum_{n\in \ZZ}\hat{\mu}_{(n)}(\V\otimes \V)\, .
\end{equation}
Note that $\V_{D} \subset \V^{1}$ because of $I\in \V$ and \eqref{2.30}.

\begin{lemma}[Dong's Lemma]\label{l2.12b}
Let\/ $\NN$ be a braiding map on\/ $\LF(V)$, and\/ $\V$ be an\/ $\NN$-local subspace of\/ $\LF(V)$.
Suppose that for all\/ $n\in\ZZ$, we have 
\begin{equation}\label{logf18b}
\NN (\hat{\mu}_{(n)}\otimes I) = (\hat{\mu}_{(n)}\otimes I) (\NN_{13}+\NN_{23}) \quad \text{on} \quad  \V^{\otimes 3}\,.
\end{equation}
Then the space\/ ${\V^{1}}$ is\/ $\NN$-local.
 \end{lemma}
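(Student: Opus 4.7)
I would verify each of the three conditions of \deref{d2.5} for $\V^1$ in turn: component-wise $\NN$-invariance, local nilpotency of $\NN$ on $\V^1\otimes\V^1$, and pairwise locality.

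\textbf{Invariance and nilpotency.} From the hexagon axiom \eqref{logf18b} and the symmetry of $\NN$, I would first extract the componentwise Leibniz-type identities
\begin{equation*}
\Phi_i(a_{(n+\NN)}b)=(\Phi_i a)_{(n+\NN)}b+(-1)^{p(\Phi_i)p(a)}a_{(n+\NN)}(\Phi_i b),
\end{equation*}
and the analogue with $\Psi_i$ in place of $\Phi_i$, by matching coefficients in the tensorial hexagon identity against the linearly independent families $\{\Phi_i\}$ and $\{\Psi_i\}$. Since $\V$ is component-wise $\NN$-invariant, so is $\V^1$. Iterating these identities, and using the mutual commutativity \eqref{logf18e} of the $\Phi_i,\Psi_j$, one expands $\NN^r$ applied to $a_{(n+\NN)}b\otimes c_{(m+\NN)}d$ into a finite sum of terms that distribute many $\Phi$'s and $\Psi$'s among $a,b,c,d\in\V$; for $r$ large enough, enough operators accumulate on some tensor pair to force vanishing by the local nilpotency of $\NN$ on $\V\otimes\V$, establishing local nilpotency on $\V^1\otimes\V^1$.

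\textbf{Locality.} For the main step, I would adapt the classical three-field Dong argument. Given $a,b,c\in\V$, fix $N\in 2\ZZ_+$ large enough that every pair among them, together with all its $\NN^i$-images, is $\NN$-local with order $N$, and consider
\begin{equation*}
F(z_1,z_2,z_3):=\prod_{1\le i<j\le 3}z_{ij}^N\,e^{\vartheta_{12}\NN_{12}+\vartheta_{13}\NN_{13}+\vartheta_{23}\NN_{23}}\,a(z_1)b(z_2)c(z_3).
\end{equation*}
The mutual commutativity of $\NN_{12},\NN_{13},\NN_{23}$ (condition \eqref{2.4-3} of \deref{lm}) lets the three exponentials factor and be moved freely; applying the pairwise locality of $a,b,c$ three times then shows that $F$ is invariant under each transposition of variables, up to the appropriate parity sign and the replacement $\vartheta_{ij}\leftrightarrow\vartheta_{ji}$ dictated by the expansion domain. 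Applying $D_{z_1}^{(N-1-n)}$ and specializing $z_1=z_2$ to the identity encoding the $z_1\leftrightarrow z_2$ invariance of $F$, the factor $z_{12}^N e^{\vartheta_{12}\NN_{12}}$ collapses via \eqref{logf16} into $(a_{(n+\NN)}b)(z_2)$, while the remaining $z_{13}^N z_{23}^N e^{\vartheta_{13}\NN_{13}+\vartheta_{23}\NN_{23}}$ reassembles, using the Leibniz rules of the previous step to recognize $\NN_{13}+\NN_{23}$ as $\NN$ acting on $(a_{(n+\NN)}b)\otimes c$, into a single $z_{23}^{N'+\NN}$. This yields locality of $a_{(n+\NN)}b$ with $c$. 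Locality between two general elements $a_{(n+\NN)}b,c_{(k+\NN)}d\in\V^1$ follows by iterating the three-field argument a second time, with $a_{(n+\NN)}b$ itself playing the role of one of the generating fields.

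\textbf{Main obstacle.} The delicate bookkeeping in the locality step is the principal difficulty: one must simultaneously manage three logarithmic expansions $\vartheta_{ij}$, each tied to its own expansion domain $|z_i|>|z_j|$ or $|z_j|>|z_i|$, and verify that after the $z_1=z_2$ specialization the leftover factors $z_{13}^N z_{23}^N e^{\vartheta_{13}\NN_{13}+\vartheta_{23}\NN_{23}}$ reassemble, via the hexagon axiom, into the single $z_{23}^{N'+\NN}$ required by \eqref{logf12}. The commutativity \eqref{logf18e} of the $\Phi_i,\Psi_j$ and the Leibniz rules established in the first step are exactly what make this reassembly possible, but the formal-power-series manipulation is where the bulk of the work lies.
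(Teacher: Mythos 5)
Your overall route is the same as the paper's: extract from the hexagon \eqref{logf18b} (plus symmetry of $\NN$) the Leibniz property of the components $\Phi_i,\Psi_i$, deduce component-wise invariance of $\V^1$ and, via the commutativity \eqref{logf18e}, local nilpotency on $\V^1\otimes\V^1$, then run a three-field Dong-type computation and iterate it to handle two products. Two points in your locality step need repair, however.

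First, you specialize the wrong symmetry. Locality of $(a_{(n+\NN)}b)(z_2)$ with $c(z_3)$ compares $(a_{(n+\NN)}b)(z_2)\,c(z_3)$ with $c(z_3)\,(a_{(n+\NN)}b)(z_2)$, so after applying $D_{z_1}^{(N-1-n)}$ and setting $z_1=z_2$ you must use the identity relating the orderings $a(z_1)b(z_2)c(z_3)$ and $c(z_3)a(z_1)b(z_2)$ --- the paper's \eqref{dong2}, i.e.\ the composition of the $(2\,3)$ and $(1\,3)$ moves --- not the $z_1\leftrightarrow z_2$ invariance of $F$. Specializing the latter only compares the product field acting on $c$ with the analogous expression built from $b(z_2)a(z_1)$; it never produces $c(z_3)$ to the left of the product, so as written the final step does not yield the conclusion. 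Since you do claim invariance of $F$ under every transposition, the needed identity is available inside your framework, but the step must be rewritten to invoke it.

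Second, the claim that ``applying the pairwise locality of $a,b,c$ three times'' gives those symmetries of $F$ hides the one genuinely delicate point, which the paper treats explicitly: you cannot commute, say, $b(z_2)$ and $c(z_3)$ while $e^{\vartheta_{12}\NN_{12}}$ is still in place, because its components $\Phi_i,\Psi_i$ act on $a$ and $b$. One must first expand $e^{\vartheta_{12}\NN_{12}}\,a(z_1)\otimes b(z_2)$ (a finite sum, by nilpotency) into fields $\Phi_{i_1}\cdots\Phi_{i_s}a$ and $\Psi_{j_1}\cdots\Psi_{j_s}b$, use the component-wise $\NN$-invariance of $\V$ to know these lie in $\V$, and only then apply locality of the modified fields with $c$ (and of the $\Phi$-modified $a$'s with the $\Psi$-modified $c$'s), enlarging $N$ to a common value, which is possible because only finitely many modified fields occur. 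Your uniform choice of $N$ (``every pair among them, together with all its $\NN^i$-images'') does not cover such mixed pairs as $\bigl(\Phi_{i_1}\cdots\Phi_{i_s}a,\,c\bigr)$, which are not $\NN^i$-images of the original pairs. With these two repairs your argument coincides with the paper's proof; the invariance and nilpotency steps are essentially as in the paper.
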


\begin{proof}
We will check each of the conditions in Definition \ref{d2.5}.

\medskip
\eqref{2.5-1} 
In components, condition \eqref{logf18b} is equivalent to the fact that all $\Phi_i$ are derivations of the $(n+\NN)$-th products:
\begin{equation}\label{logf18d}
\Phi_{i}(a_{(n+\NN)}b)=\Phi_{i}(a)_{(n+\NN)}b+a_{(n+\NN)}\Phi_{i}(b)\,,
\qquad a,b\in\V \,.
\end{equation}
for $a,b\in\V$. Then the symmetry of $\NN$ implies
\begin{equation}\label{logf18dps}
\Psi_{i}(a_{(n+\NN)}b)=\Psi_{i}(a)_{(n+\NN)}b+a_{(n+\NN)}\Psi_{i}(b)\,,
\qquad a,b\in\V \,.
\end{equation}
Now, since $\V$ is component-wise $\NN$-invariant (see \eqref{logf-ls3}), the above equations
\eqref{logf18d} and \eqref{logf18dps} imply that $\V^1$ is component-wise $\NN$-invariant.

\medskip
\eqref{2.5-2} 
Similarly to \eqref{logf18b}, we have that \eqref{logf18dps} is equivalent to 
\begin{equation*}
\NN (I\otimes \hat{\mu}_{(n)}) = ( I\otimes \hat{\mu}_{(n)}) (\NN_{12}+\NN_{13}) \qquad \text{on} \quad \V^{\otimes 3}\,.
\end{equation*}
Together with \eqref{logf18b} (with $m$ in place of $n$), this gives
\begin{equation*}
\NN(\hat{\mu}_{(m)}\otimes \hat{\mu}_{(n)})=(\hat{\mu}_{(m)}\otimes \hat{\mu}_{(n)})(\NN_{13}+\NN_{14}+\NN_{23}+\NN_{24}) \qquad \text{on} \quad  \V^{\otimes 4}\,,
\end{equation*}
for all $m,n\in\ZZ$. From here, we obtain
\begin{equation*}
\NN^r(\hat{\mu}_{(m)}\otimes \hat{\mu}_{(n)})=(\hat{\mu}_{(m)}\otimes \hat{\mu}_{(n)})(\NN_{13}+\NN_{14}+\NN_{23}+\NN_{24})^r \qquad \text{on} \quad  \V^{\otimes 4}\,,
\end{equation*}
for all $r\ge1$.
Applying both sides of this equation to $a\otimes b\otimes c\otimes d \in\V^{\otimes 4}$ gives
\begin{equation*}
\NN^r \bigl(a_{(m+\NN)}b \otimes c_{(n+\NN)}d\bigr) =
(\hat{\mu}_{(m)}\otimes \hat{\mu}_{(n)})(\NN_{13}+\NN_{14}+\NN_{23}+\NN_{24})^r (a\otimes b\otimes c\otimes d).
\end{equation*}
Now observe that, by \eqref{logf18e}, the operators $\NN_{13}$, $\NN_{14}$, $\NN_{23}$, $\NN_{24}$ commute with each other.
Since each of them is nilpotent on $a\otimes b\otimes c\otimes d$, it follows that their sum is also nilpotent on $a\otimes b\otimes c\otimes d$.
Thus, $\NN$ is nilpotent on $a_{(m+\NN)}b \otimes c_{(n+\NN)}d$.

\medskip
\eqref{2.5-3} 
Now we will prove that all fields from $\V^{1}$ are local in two steps:

\begin{enumerate}
\smallskip
\item[(i)] 
For all $a,b\in \V$, the field $a_{(n+\NN)}b$ is local with respect to any $c\in \V$.

\smallskip
\item[(ii)] 
For all $a,b,c,d\in \V$, the fields $a_{(n+\NN)}b$ and $c_{(m+\NN)}d$ are local.
\end{enumerate}

\smallskip
To prove claim (i), notice that because $\V$ is a local space we can find a sufficiently large $N$ such that:
\begin{equation}\label{dong1}
\begin{split}
\z_{12}^{N+\NN}a(\z_{1})b(\z_2) &=(-1)^{p(a)p(b)}\z_{21}^{N+\NN}b(\z_{2})a(\z_1) \,, \\
\z_{13}^{N+\NN} \, a(\z_1) c(\z_3) &=(-1)^{p(a)p(c)} \z_{31}^{N+\NN} \, c(\z_3) a(\z_1)\, , \\
 \z_{23}^{N+\NN} \, b(\z_2) c(\z_3) &=(-1)^{p(b)p(c)} \z_{32}^{N+\NN} \, c(\z_3) b(\z_2) \,.
 \end{split}
 \end{equation}
 Here we used notation \eqref{z12N}, namely $\z_{ij}^{N+\NN}=z_{ij}^{N}e^{\vartheta_{ij}\NN}$ and $\z_{ji}^{N+\NN}=z_{ij}^{N}e^{\vartheta_{ji}\NN}$ for $1\le i<j\le 3$. 

As in the proof of \cite[Lemma 3.4]{B}, we compute for $n'=N-1-n\geq0$ and $v\in V$:
\begin{align*}
&z_{23}^{2N+n'+\NN} (a_{(n+\NN)}b)(z_2) \, c(z_3) v \\
&=\mu\Bigl(z_{23}^{2N+n'+\NN}D_{z_{1}}^{(n')}\mu\bigl(z_{12}^{N+\NN}a(z_{1})\otimes b(z_{2})\bigr)\Big|_{z_{1}=z_{2}}
\otimes c(z_{3})\Bigr) v \\
&=\mu(\mu\otimes I)\Bigl(z_{23}^{2N+n'}D_{z_{1}}^{(n')}e^{\vartheta_{23}\NN_{13}}e^{\vartheta_{23}\NN_{23}}z_{12}^{N+\NN_{12}}a(z_{1})\otimes b(z_{2})\otimes c(z_{3})\Bigr)\Big|_{z_{1}=z_{2}} v \,.
\end{align*}
For the last equality we used that, by Definition \ref{lm}\eqref{2.4-3} and \eqref{logf18b}, we have
\begin{equation*}
e^{\ze \NN}(\mu\otimes I)=(\mu\otimes I)e^{\ze \NN_{13}}e^{\ze\NN_{23}} \,.
\end{equation*}
Moreover, $\vartheta_{13} = \vartheta_{23}$ after setting $z_1=z_2$ and $\ze_1=\ze_2$. 
Similarly, we can replace $z_{23}^{2N+n'}$ with $z_{13}^{N+n'} z_{23}^{N}$ and obtain:
\begin{equation*}
z_{13}^{N+n'+\NN_{13}} z_{23}^{N+\NN_{23}} D_{z_{1}}^{(n')} z_{12}^{N+\NN_{12}} a(z_{1}) b(z_{2}) c(z_{3}) v \big|_{z_{1}=z_{2}} \,.
\end{equation*}
Next, we apply the Leibniz rule in the form
\begin{equation*}
f(z,\ze) D_z^{(n)} g(z,\ze) = \sum_{i=0}^n (-1)^{n-i} D^{(i)}_z \Bigl(\bigl( D_z^{(n-i)} f(z,\ze) \bigr) g(z,\ze)\Bigr),
\end{equation*}
which holds for arbitrary functions $f$ and $g$.
Using that $D_{z_{1}}^{(k)} z_{12}^{m+\NN} = \binom{m+\NN}{k} z_{12}^{m-k+\NN}$,
we obtain
\begin{align*}
&z_{23}^{2N+n'+\NN} (a_{(n+\NN)}b)(z_2) \, c(z_3) v \\
&\;\;=\sum_{i=0}^{n'} (-1)^{n'-i} \binom{N+n'+\NN_{13}}{n'-i} \\
&\qquad\times D^{(i)}_{z_{1}} \Bigl( 
z_{13}^{N+i+\NN_{13}} z_{23}^{N+\NN_{23}} z_{12}^{N+\NN_{12}} a(z_{1}) b(z_{2}) c(z_{3}) v 
\Bigr)\Big|_{z_{1}=z_{2}} \,.
 \end{align*}
 Repeating the same calculation, we also get
 \begin{align*}
&z_{32}^{2N+n'+\NN} c(z_3) \, (a_{(n+\NN)}b)(z_2) v \\
&\;\;=\sum_{i=0}^{n'} (-1)^{n'-i} \binom{N+n'+\NN_{13}}{n'-i} \\
&\qquad\times D^{(i)}_{z_{1}} \Bigl( 
z_{31}^{N+i+\NN_{12}} z_{32}^{N+\NN_{13}} z_{12}^{N+\NN_{23}} c(z_{3}) a(z_{1}) b(z_{2}) v 
\Bigr)\Big|_{z_{1}=z_{2}} \,.
 \end{align*}
Thus, the locality of $a_{(n+\NN)}b$ and $c$ will follow from the equality
\begin{equation}\label{dong2}
\begin{split}
z_{13}^{N+\NN_{13}} & z_{23}^{N+\NN_{23}} z_{12}^{\NN_{12}} a(z_{1}) b(z_{2}) c(z_{3}) \\
&=
z_{31}^{N+\NN_{12}} z_{32}^{N+\NN_{13}} z_{12}^{\NN_{23}} c(z_{3}) a(z_{1}) b(z_{2}) \,.
\end{split}
\end{equation}

We can derive \eqref{dong2} from \eqref{dong1}, but the derivation is subtle because we first
need to apply the operator $z_{12}^{\NN} = e^{\vartheta_{12} \NN}$ to $a(z_{1}) \otimes b(z_{2})$ before we can use the locality
of the pairs $a,c$ and $b,c$. This is where we use that $\V$ is a local space. Suppose that $\NN^r (a \otimes b) = 0$,
then we get from \eqref{logf14}:
\begin{equation}\label{Nexp}
\begin{split}
z_{12}^{\NN} \, & a(z_{1}) b(z_{2}) \\
&= \sum_{s=0}^{r-1} \vartheta_{12}^{(s)} \!\sum_{\substack{1\le i_1,\dots,i_s \le L \\ 1\le j_1,\dots,j_s \le L}}
\bigl(\Phi_{i_1} \cdots \Phi_{i_s} a(z_{1})\bigr) \bigl(\Psi_{j_1} \cdots \Psi_{j_s} b(z_{2})\bigr) \,.
\end{split}
\end{equation}
We have similar expressions for $z_{13}^{\NN} a(z_{1}) c(z_{3})$ and $z_{23}^{\NN} b(z_{2}) c(z_{3})$.
Now we can use the locality of all $\Psi_{j_1} \cdots \Psi_{j_s} b\in \V$
with $c$, increasing if necessary $N$ so that equations \eqref{dong1} hold for all of them.
This implies
\begin{equation*}
z_{13}^{N+\NN_{13}} z_{23}^{N+\NN_{23}} z_{12}^{\NN_{12}} a(z_{1}) b(z_{2}) c(z_{3}) \\
=
z_{13}^{N+\NN_{12}} z_{32}^{N+\NN_{23}} z_{12}^{\NN_{13}} a(z_{1}) c(z_{3}) b(z_{2}) \,.
\end{equation*}
Then we use the locality of all $\Phi_{i_1} \cdots \Phi_{i_s} a\in \V$ with all $\Psi_{k_1} \cdots \Psi_{k_t} c\in\V$,
again increasing $N$ as needed. We obtain
\begin{equation*}
z_{13}^{N+\NN_{12}} z_{32}^{N+\NN_{23}} z_{12}^{\NN_{13}} a(z_{1}) c(z_{3}) b(z_{2}) \\
=
z_{31}^{N+\NN_{12}} z_{32}^{N+\NN_{13}} z_{12}^{\NN_{23}} c(z_{3}) a(z_{1}) b(z_{2}) \,,
\end{equation*}
which proves \eqref{dong2} and completes the proof that $a_{(n+\NN)}b$ and $c$ are $\NN$-local.
 
Finally, we prove step (ii). By step (i) above, we have that any field in $\V$ is local with respect to $c_{(m+\NN)}d$.
In particular, $a$ and $c_{(m+\NN)}d$ are local, and $b$ and $c_{(m+\NN)}d$ are local. Using step (i) again with $c_{(m+\NN)}d$
in place of $c$, we see now that $a_{(n+\NN)}b$ is local with $c_{(m+\NN)}d$. This completes the proof of the lemma.
\end{proof}

\begin{remark}
It is not hard to prove that, if $\NN$ satisfies \eqref{logf18b} and $[\N_{12},\N_{23}]$ $=[\N_{13},\N_{23}]=[\N_{12},\N_{13}]=0$ on $\V^{\otimes 3}$, then the latter holds also on
$(\V^{1})^{\otimes 3}$. However, in all our examples, the last condition is satisfied on the whole $\LF(V)^{\otimes 3}$.
\end{remark}

In the next lemma, we make a stronger assumption on $\NN$, which is nevertheless sufficient for our purposes.

\begin{lemma}\label{l2.12}
Let\/ $\NN$ be a braiding map on\/ $\LF(V)$, 
such that\/ \eqref{logf18b} holds for any\/ $\NN$-local subspace\/ $\V\subset\LF(V)$.
Then, for any\/ $\NN$-local subspace\/ $\V$ containing $I$,
there exists an\/ $\NN$-local subspace\/ $\overline{\V} \subset\LF(V)$ containing\/ $\V$, 
which is closed under all\/ $(n+\NN)$-th products $(n\in\ZZ)$.
\end{lemma}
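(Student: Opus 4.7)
The plan is to construct $\overline{\V}$ as the union of an increasing chain of local subspaces obtained by iteratively applying the operation $\V \mapsto \V^1$ from \eqref{logf-v1}, and then verify that this union has the desired closure and locality properties.

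More precisely, I would set $\V^0 = \V$ and recursively define
\begin{equation*}
\V^{k+1} = \sum_{n\in\ZZ} \hat{\mu}_{(n)}(\V^k\otimes\V^k) \subset \LF(V) \,, \qquad k\ge 0\,,
\end{equation*}
and then let $\overline{\V} = \bigcup_{k\ge 0} \V^k$. First I would check that the chain is increasing: since $I\in\V^0$ and, by \eqref{2.31}, $a = I_{(-1+\NN)}a = \hat{\mu}_{(-1)}(I\otimes a)$ for every $a\in\V^k$, it follows inductively that $I\in\V^k$ and $\V^k\subset\V^{k+1}$ for all $k$.

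The key step is to show, by induction on $k$, that each $\V^k$ is $\NN$-local. The base case is the hypothesis on $\V$. For the inductive step, I would use that the assumption of the lemma guarantees that \eqref{logf18b} holds on $(\V^k)^{\otimes 3}$, so Dong's Lemma (\leref{l2.12b}) applies and yields that $\V^{k+1} = (\V^k)^1$ is $\NN$-local. This is the main content of the proof, and the only delicate point is verifying that the strong form of the hypothesis — that \eqref{logf18b} holds for \emph{any} $\NN$-local subspace — really does let us reapply Dong's Lemma at every stage without further work; this is precisely why the hypothesis is stated in this uniform way.

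Finally I would verify that $\overline{\V}$ satisfies the three conditions of \deref{d2.5} together with closure under $(n+\NN)$-th products. Each of these is a statement about finitely many elements $a,b,\dots\in\overline{\V}$; any finite subset lies in some single $\V^k$, so component-wise $\NN$-invariance, local nilpotency of $\NN$ on $\overline{\V}\otimes\overline{\V}$, and $\NN$-locality of any two fields all follow immediately from the corresponding properties of $\V^k$. Closure under $(n+\NN)$-th products follows because $a,b\in\V^k$ implies $a_{(n+\NN)}b\in\V^{k+1}\subset\overline{\V}$ for every $n\in\ZZ$. The main (and essentially only) obstacle is the inductive application of Dong's Lemma; everything else is formal bookkeeping about the nested union.
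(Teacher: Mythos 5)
Your proposal is correct and follows essentially the same route as the paper: iterate the construction $\V\mapsto\V^1$ of \eqref{logf-v1}, apply Dong's Lemma (\leref{l2.12b}) at each stage using the uniform hypothesis on \eqref{logf18b}, and take the union of the resulting increasing chain. Your explicit verification that the chain is increasing (via $I\in\V$ and \eqref{2.31}) and that locality and closure pass to the union are details the paper leaves implicit, but the argument is the same.
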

 \begin{proof}
Starting from the local space $\V$, let $\V^1\supset\V$ be given by \eqref{logf-v1}. Then $\V^{1}$ is again a local space, by \leref{l2.12b}. 
By assumption, \eqref{logf18b} now holds for $\V^1$ in place of $\V$; hence, we can apply \leref{l2.12b} for $\V^1$ and construct the local space $\V^{2}=(\V^{1})^{1}$.
Continuing by induction, we let $\V^{m+1} = (\V^{m})^1$, and we obtain a sequence of $\NN$-local subspaces
\[\V=\V^0 \subset \V^{1}\subset \V^2 \subset\cdots \,.\]
Their union $\overline{\V}$ is $\NN$-local and closed under all $(n+\NN)$-th products.
 \end{proof}

Notice that the space $\overline{\V}$ constructed in the proof of the above lemma is the unique minimal such subspace.

\subsection{Braiding maps compatible with an algebra}\label{slocmap}

The results of Lemmas \ref{lem2.6}, \ref{l2.9}, \ref{l2.12b} suggest the following notion.

\begin{definition}\label{def1}
Let $U$ be a unital differential algebra, with a (not necessarily associative) product $\mu\colon U\otimes U\to U$,
a unit element $1\in U$, and a derivation $D\in\Der(U)$.
We say that a braiding map $\N$ on $U$ is \emph{compatible} with the algebra structure if it has the following three properties:

\begin{enumerate}
\item\label{2.16-1}
$\N(a \otimes 1) = 0$ for all $a\in U$;

\smallskip
\item\label{2.16-2}
$[D \otimes I, \N]=0$;

\smallskip
\item\label{2.16-3}
$\N (\mu\otimes I) = (\mu\otimes I) (\N_{13}+\N_{23})$.
\end{enumerate}
Sometimes we will denote a unital differential algebra with a compatible braiding map as a quintuple $(U,1,D,\mu,\N)$.
\end{definition}

As before, if we write
\begin{equation}\label{logf-u1}
\N=\sum_{i=1}^{L}\Phi_{i}\otimes  \Psi_{i}\,, \qquad \Phi_{i}, \Psi_{i}\in \End(U) \,, \quad p(\Phi_i)=p(\Psi_i) \,,
\end{equation}
then, combined with the symmetry of $\N$, conditions \eqref{2.16-1}--\eqref{2.16-3} above are equivalent to the following:
\begin{align}
\label{logf-u2}
\Phi_i 1 = \Psi_i 1 = 0 &\,, \\
\label{logf-u3}
[D,\Phi_i] = [D,\Psi_i] = 0 &\,, \\
\label{logf-u4}
\Phi_i, \Psi_i \in\Der(U) &\,, \qquad 1\le i\le L \,.
\end{align}

\begin{remark}
The identities in Definition \ref{def1} are similar to the identities satisfied by \emph{braiding maps} in quantum vertex algebras \cite{EK, FR,DGK}.
We describe the similarities between logarithmic vertex algebras and quantum vertex algebras in Remark \ref{re3.8} below.
\end{remark}

Lemma \ref{l2.12} provides examples of algebras with compatible braiding maps as follows.
 
\begin{corollary}\label{cor3.10-1}  
Let\/ $\NN$ be a braiding map on\/ $\LF(V)$, 
such that\/ \eqref{logf18b} holds for any\/ $\NN$-local subspace\/ $\V\subset\LF(V)$.
Then, for every fixed $n\in\ZZ$, we have the unital differential algebra with a compatible braiding map\/
 $(\overline{\V}, I, D_{z}, \hat{\mu}_{n},\NN)$.
\end{corollary}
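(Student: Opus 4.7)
The plan is to recognize this corollary as a direct repackaging of Lemma \ref{l2.12}: once the space $\overline{\V}$ has been constructed and shown to be $\NN$-local, the three compatibility clauses of Definition \ref{def1} follow immediately from the hypotheses on $\NN$ together with the basic algebraic identities for $(n+\NN)$-th products derived in Sections \ref{s2.2} and \ref{sdong}. There is no serious obstacle; the real work was done in the generalized Dong's Lemma (Lemma \ref{l2.12b}).

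First I invoke Lemma \ref{l2.12} to produce an $\NN$-local subspace $\overline{\V}\subset\LF(V)$ containing $\V$ (and hence $I$) that is closed under $\hat{\mu}_m$ for every $m\in\ZZ$. Fixing the integer $n$ in the statement, the $(n+\NN)$-th product thus restricts to a map $\hat{\mu}_n\colon\overline{\V}\otimes\overline{\V}\to\overline{\V}$. Using \eqref{2.30} and the closure of $\overline{\V}$ under $\hat{\mu}_{-2}$, we have $D_z a = a_{(-2+\NN)}I\in\overline{\V}$ for every $a\in\overline{\V}$, so $D_z$ preserves $\overline{\V}$; the Leibniz-type identity $D_z(a_{(n+\NN)}b) = (D_z a)_{(n+\NN)}b + a_{(n+\NN)}(D_z b)$ from Section \ref{s2.2} then certifies that $D_z\in\Der(\overline{\V})$ with respect to $\hat{\mu}_n$.

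It remains to verify the three compatibility clauses of Definition \ref{def1}. Property (1), $\NN(a\otimes I)=0$ for $a\in\overline{\V}$, is inherited from the standing assumption \eqref{1.2} on all of $\LF(V)$. Property (2), $[D_z\otimes I,\NN]=0$ on $\overline{\V}\otimes\overline{\V}$, is inherited from \eqref{logfdn}. Property (3), $\NN(\hat{\mu}_n\otimes I) = (\hat{\mu}_n\otimes I)(\NN_{13}+\NN_{23})$ on $\overline{\V}^{\otimes 3}$, is exactly the hypothesis of the corollary applied to the $\NN$-local subspace $\overline{\V}$. Finally, $\NN$ retains its status as a braiding map when restricted to $\overline{\V}$: symmetry and the commutation identities from Definition \ref{lm} are inherited from $\LF(V)$, and local nilpotency on $\overline{\V}\otimes\overline{\V}$ is part of the conclusion of Lemma \ref{l2.12}. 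The one place where genuine care was needed, namely establishing the $\NN$-locality of $\overline{\V}$, was already handled inductively in the proof of Lemma \ref{l2.12b}, so no new calculation is required at the level of the corollary itself.
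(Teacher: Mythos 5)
Your proposal is correct and matches the paper's intent: the paper gives no separate proof of this corollary, presenting it as an immediate consequence of Lemma \ref{l2.12} together with the standing assumptions \eqref{1.2} and \eqref{logfdn} and the product identities of Section \ref{s2.2}, which is exactly the route you spell out (including the useful detail that $D_z$-invariance of $\overline{\V}$ follows from \eqref{2.30} and closure under the $(n+\NN)$-th products, and that hypothesis \eqref{logf18b} applied to the local space $\overline{\V}$ itself gives Definition \ref{def1}\eqref{2.16-3}). No gaps.
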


Now we will provide a construction of such compatible braiding maps, which appears in all examples from Section \ref{s4} below.
Fix a vector superspace $V$, and consider the associative algebra $U=\End(V)$ with the product $\mu$ given by composition,
the unit element $I$ the identity operator, and the trivial derivation $D=0$. Let $\NN$ be a braiding map on $U$, compatible
with the algebra structure. 
We extend the action of $\Phi_i$ and $\Psi_i$ to $\LF(V)$ by acting only on the coefficients in front
of powers of $z^{\pm1}$ and $\ze$ in the logarithmic fields. Then this action clearly commutes with the derivative $D_z$.
The remaining properties are also easy to check, leading to the following result.

\begin{lemma}\label{loccomp}
Let\/ $(\End(V),I,0,\mu,\NN)$ be a compatible structure as above, and extend the action of\/ $\NN$ to\/ $\LF(V)\otimes\LF(V)$
by acting on the coefficients of logarithmic fields. 
Then $\NN$ is a braiding map on\/ $\LF(V)$, which satisfies\/ \eqref{1.2}, \eqref{logfdn}, and \eqref{logf18b} for any\/ $\NN$-local subspace\/ $\V\subset\LF(V)$.
\end{lemma}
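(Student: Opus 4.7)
The plan is to verify each of the four assertions---that $\NN$ is a braiding map on $\LF(V)$ and that it satisfies \eqref{1.2}, \eqref{logfdn}, and \eqref{logf18b}---by reducing them to the corresponding properties of the components $\Phi_i,\Psi_i$ on $\End(V)$, which are encoded by \eqref{logf-u2}--\eqref{logf-u4} together with \eqref{logf18e}.

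First I extend each $\Phi_i,\Psi_i$ from $\End(V)$ to $\LF(V)$ by letting them act coefficient-wise on the $\End(V)$-valued coefficients of the Laurent polynomials in $z^{\pm 1}$ and $\ze$. Since the symmetry identity \eqref{logf18s} and the commutation relations \eqref{logf18e} are linear in the $\Phi_i,\Psi_i$, they lift verbatim, so the extended $\NN$ is symmetric on $\LF(V)\otimes\LF(V)$ and $\NN_{12},\NN_{13},\NN_{23}$ pairwise commute on $\LF(V)^{\otimes 3}$; hence $\NN$ is a braiding map on $\LF(V)$. For \eqref{1.2}, note that the identity field has only one nonzero coefficient, namely $I\in\End(V)$, which is killed by each $\Phi_i$ and each $\Psi_i$ by \eqref{logf-u2}, so $\NN(a\otimes I)=\NN(I\otimes a)=0$. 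For \eqref{logfdn}, the operator $D_z=\d_z+z^{-1}\d_\ze$ acts only on the formal variables, while $\Phi_i,\Psi_i$ act only on the $\End(V)$-coefficients, so they commute on the nose.

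The heart of the argument is \eqref{logf18b}, which is where an arbitrary $\NN$-local subspace $\V$ enters. By the symmetry of $\NN$, this identity is equivalent to the pair of derivation properties \eqref{logf18d}, \eqref{logf18dps}, stating that each $\Phi_i$ and each $\Psi_i$ is a derivation of every $(n+\NN)$-th product on $\V$. I would prove \eqref{logf18d} by applying $\Phi_i$ directly to the defining formula \eqref{logf16}. Since $\Phi_i$ acts only on $\End(V)$-coefficients, it commutes with $D_{z_1}^{(N-1-n)}$, with the scalar factor $z_{12}^N$, and with the specialization $z_1=z_2=z$. It then remains to commute $\Phi_i$ past the composition product $\mu$ and past $e^{\vartheta_{12}\NN}$. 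The first is \eqref{logf-u4}---that $\Phi_i$ is a derivation of $\mu$ on $\End(V)$---extended coefficient-wise, giving $\Phi_i\mu=\mu(\Phi_i\otimes I+I\otimes\Phi_i)$ up to Koszul signs. The second uses \eqref{logf18e}: the relations $[\Phi_i,\Phi_j]=[\Phi_i,\Psi_j]=0$ imply that $\Phi_i\otimes I+I\otimes\Phi_i$ commutes with $\NN=\sum_j\Phi_j\otimes\Psi_j$, and hence with $e^{\vartheta_{12}\NN}$. Combining these yields \eqref{logf18d}; the identical argument with $\Psi_i$ in place of $\Phi_i$ gives \eqref{logf18dps}, and the two together are the component form of \eqref{logf18b}.

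I expect the main obstacle to be the careful bookkeeping of Koszul signs arising from $p(\Phi_i)$: although $\NN$ is even by assumption, the individual $\Phi_i$ and $\Psi_i$ may be odd, and one must check that the signs produced by commuting $\Phi_i$ across $\otimes$ inside $\mu$ and inside $e^{\vartheta_{12}\NN}$ agree with those built into the definition \eqref{logf-nact} of the $\NN$-action on tensor products. Once this sign consistency is verified, the proof requires no further computation beyond the formal manipulations described above.
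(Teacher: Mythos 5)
Your proposal is correct and follows essentially the same route as the paper, which treats this lemma as a routine component-wise verification (the paper only remarks that the coefficient-wise extension "clearly commutes with $D_z$" and that "the remaining properties are also easy to check"); your reduction of \eqref{logf18b} to the derivation property of $\Phi_i,\Psi_i$ on $\mu$ together with $[\Phi_i\otimes I+I\otimes\Phi_i,\NN]=0$ via \eqref{logf18e} is exactly the intended argument. The only detail worth making explicit is that, when applying $\Phi_i$ to \eqref{logf16}, one should choose $N$ large enough to serve as a locality bound simultaneously for $a\otimes b$, $\Phi_i a\otimes b$, and $a\otimes\Phi_i b$ (possible since the $(n+\NN)$-th product is independent of the choice of admissible $N$), which is a minor bookkeeping point rather than a gap.
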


In particular, such braiding maps can be constructed as in \eqref{eq2.34}.

\begin{corollary}\label{cor3.10-2}  
Let\/ $V$ be a vector superspace, $\N$ be a braiding map on\/ $V$, and let\/ $\NN=(\ad\otimes\ad)(\N);$ cf.\ \eqref{logf3}, \eqref{logf-n}, \eqref{eq2.34}.
Then\/ $\NN$ is a braiding map on\/ $\LF(V)$ satisfying\/ \eqref{1.2}, \eqref{logfdn}, and \eqref{logf18b} for any\/ $\NN$-local subspace\/ $\V\subset\LF(V)$.
\end{corollary}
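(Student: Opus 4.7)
The plan is to reduce the statement to Lemma \ref{loccomp} by showing that $\NN := (\ad \otimes \ad)(\N)$ is a braiding map on the associative superalgebra $\End(V)$ which is compatible with the algebra structure $(\End(V), I, 0, \mu, \NN)$ in the sense of Definition \ref{def1} (trivial derivation, unit the identity operator, product given by composition). Once this is in place, Lemma \ref{loccomp} directly transports $\NN$ to a braiding map on $\LF(V)$ together with the three desired properties \eqref{1.2}, \eqref{logfdn}, and \eqref{logf18b}.

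Write $\N = \sum_i \phi_i \otimes \psi_i$ with $p(\phi_i) = p(\psi_i)$. The key observation I would use is that $\ad \colon \End(V) \to \End(\End(V))$ is a linear Lie superalgebra homomorphism, so the map $(\ad)^{\otimes k}$ is linear for every $k$. Applying $(\ad)^{\otimes 2}$ to the symmetry identity \eqref{logf18s} for $\N$ yields the symmetry of $\NN$. For the commutativity axiom, applying $(\ad)^{\otimes 3}$ to $[\N_{12},\N_{13}] = 0$ (viewed as an equality in $\End(V)^{\otimes 3}$) gives
\[
[\NN_{12}, \NN_{13}] = \sum_{i,j} \ad([\phi_i, \phi_j]) \otimes \ad(\psi_i) \otimes \ad(\psi_j) = (\ad)^{\otimes 3}\bigl([\N_{12}, \N_{13}]\bigr) = 0,
\]
and the other two commutators vanish analogously. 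Compatibility with the algebra structure then follows from the component criterion \eqref{logf-u2}--\eqref{logf-u4}: condition \eqref{logf-u2} is immediate since $\ad(\phi_i)(I) = [\phi_i, I] = 0$ and likewise for $\Psi_i$; condition \eqref{logf-u3} is vacuous because $D = 0$; and condition \eqref{logf-u4} is the standard fact that each inner (super)derivation $\ad(\phi_i)$, $\ad(\psi_i)$ is a derivation of composition on $\End(V)$.

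The only subtle point, and the one I would verify most carefully, is that the commutativity conditions in Definition \ref{lm}\eqref{2.4-3} are genuine identities in $\End(V)^{\otimes 3}$, as opposed to mere identities of operators on $V^{\otimes 3}$; this is needed for $(\ad)^{\otimes 3}$ to be applicable termwise and to transport the vanishing. Since $\N$ is defined in the paper as an element of $\End(V) \otimes \End(V)$ (see \eqref{logf-n}) and the operators $\N_{ij}$ are likewise defined at the level of $\End(V)^{\otimes 3}$ in \eqref{logf-1}, this interpretation is built into the formalism, so the argument goes through and Lemma \ref{loccomp} closes the proof.
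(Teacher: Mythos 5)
Your proposal is correct and takes essentially the same route as the paper: the corollary is stated there as an immediate consequence of Lemma \ref{loccomp}, obtained by observing that $\NN=(\ad\otimes\ad)(\N)$ is a braiding map on $\End(V)$ compatible with the structure $(\End(V),I,0,\mu)$ in the sense of Definition \ref{def1}, which is exactly the reduction you carry out (you merely supply the componentwise verification that the paper leaves implicit). One cosmetic remark: in the super case your displayed formula for $[\NN_{12},\NN_{13}]$ omits the Koszul signs $(-1)^{p(\phi_i)p(\phi_j)}$ in front of $\ad([\phi_i,\phi_j])\otimes\ad(\psi_i)\otimes\ad(\psi_j)$, but since the identical signs occur in the expansion of $[\N_{12},\N_{13}]$, the identity $[\NN_{12},\NN_{13}]=(\ad)^{\otimes3}\bigl([\N_{12},\N_{13}]\bigr)$ on which your argument rests is unaffected.
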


\subsection{Operator product expansion}\label{OPE}

As before, we denote by $\z_{1},\z_{2},\dots$ and $\ze_{1},\ze_{2},\dots$ commuting even formal variables, where we think of $\ze_i$ as $\log z_i$,
and we continue to use the notation $z_{ij} = z_i-z_j$. We introduce additional commuting even formal variables $\ze_{ij}=\ze_{ji}$, which are
thought of as $\log |z_{ij}|$.

In order to define the operator product expansion of logarithmic fields, we need the following definitions motivated by the geometric and logarithmic series expansions in complex analysis.
Let
\begin{equation}\label{iota}
\iota_{z_1,z_2}z_{12}^{-1}= D_{z_1} \vartheta_{12} = \sum_{j\geq 0}z_{1}^{-j-1}z_{2}^{j}
\end{equation}
be the series expansion of $z_{12}^{-1} = 1/(z_1-z_2)$ in the domain $|z_1|>|z_2|$, 
and similarly
\begin{equation}\label{iota2}
\iota_{z_1,z_2}\ze_{12} = \vartheta_{12} = \ze_{1}-\sum_{j\geq 1}\frac{1}{j}z_{1}^{-j}z_{2}^{j}
\end{equation}
be the expansion of $\log z_{12}$ in the same domain
(cf.\ \eqref{logf7} and \reref{r-ze}).
The map $\iota_{z_1,z_2}$ can be extended uniquely to a homomorphism of associative algebras
\begin{equation*}
\iota_{z_1,z_2} \colon V[\![z_{1},z_{2}]\!][z_{1}^{-1}, z_{2}^{-1}, z_{12}^{-1},  \ze_{1}, \ze_{2}, \ze_{12}]\rightarrow V(\!(z_1)\!)(\!(z_2)\!)[\ze_1,\ze_2] \,,
\end{equation*}
by defining it on the generators $z_{12}^{-1}$ and $\ze_{12}$ as above and as the identity on all others:
\begin{equation}\label{iota3}
\iota_{z_1,z_2}(z_i^{\pm1}) = z_i^{\pm1} \,, \qquad \iota_{z_1,z_2}(\ze_i) = \ze_i \,, \qquad i=1,2 \,.
\end{equation}
Analogously, we define the map 
\begin{equation*}
\iota_{z_2,z_1} \colon V[\![z_{1},z_{2}]\!][z_{1}^{-1}, z_{2}^{-1}, z_{12}^{-1},  \ze_{1}, \ze_{2}, \ze_{12}]\rightarrow V(\!(z_2)\!)(\!(z_1)\!)[\ze_1,\ze_2]\,,
\end{equation*}
which acts as the expansion in the domain $|z_2|>|z_1|$. Explicitly, it is given by (cf.\ \eqref{logf8}):
\begin{align}
\label{iota4}
\iota_{z_2,z_1}\ze_{12} &=\vartheta_{21} =\ze_{2}-\sum_{j\geq 1}\frac{1}{j} z_{1}^{j} z_{2}^{-j} \,, \\
\label{iota5}
\iota_{z_2,z_1}z_{12}^{-1} &= D_{z_1} \vartheta_{21} = -D_{z_2} \vartheta_{21} = -\sum_{j\geq 0} z_{1}^{j} z_{2}^{-j-1} \, , \\
\label{iota6}
\iota_{z_2,z_1}(z_i^{\pm1}) &= z_i^{\pm1} \,, \qquad \iota_{z_2,z_1}(\ze_i) = \ze_i \,, \qquad i=1,2 \,.
\end{align}
Observe that here $\iota_{z_2,z_1}\ze_{12}=\iota_{z_2,z_1}\ze_{21}$ is the expansion of $\log z_{21}$ for $|z_2|>|z_1|$. 
Finally, define the map
\begin{equation*}
\begin{split}
\iota_{z_2,z_{12}} \colon V[\![z_{1},z_{2}]\!][z_{1}^{-1}, z_{2}^{-1}, z_{12}^{-1},  \ze_{1}, \ze_{2}, \ze_{12}]\rightarrow V(\!(z_2)\!)(\!(z_{12})\!)[\ze_{2},\ze_{12}]
\end{split}
\end{equation*}
as the expansion in the domain $|z_2|>|z_{12}|$, where we use the relation $z_1=z_2+z_{12}$ and we think of $\ze_1$ as $\log z_1 = \log(z_2+z_{12})$.
Namely,
\begin{align}
\label{iota7}
\iota_{z_2,z_{12}}\ze_{1} &=\ze_{2}-\sum_{j\geq 1 }\frac{(-1)^{j}}{j} z_{12}^{j} z_{2}^{-j} \,,\\
\label{iota8}
\iota_{z_2,z_{12}}z^{-1}_{1} &=D_{z_1} \iota_{z_2,z_{12}}\ze_{1}  = \sum_{j\geq 0}(-1)^{j} z_{12}^{j} z_{2}^{-j-1}\, , 
\end{align}
and $\iota_{z_2,z_{12}}$ is the identity on the remaining generators $z_1$, $z_{2}$, $z_2^{-1}$, $z_{12}^{-1}$, $\ze_2$, $\ze_{12}$.
Note that, by construction, all $\iota$-maps are homomorphisms of associative algebras and commute with the actions of the derivatives
$D_{z_1}$, $D_{z_2}$ and with the multiplication by elements of $\CC[z_1,z_2]$.

With the above notation, we can formulate the following theorem, which is similar to corresponding results for ordinary vertex algebras and generalized vertex algebras
(see \cite{DL,FB,LL,BK2}).

\begin{theorem}\label{th2.16}
Let\/ $V$ be a superspace and\/ $\V\subset\LF(V)$ be a space of local logarithmic fields for some braiding map\/ $\NN$.
Then, for every fixed $a,b\in \V$ and\/ $v\in V$, there exists a formal power series 
\begin{equation}\label{iota9}
\Y_{a,b,v}(z_1,z_2)\in V[\![z_{1},z_{2}]\!][z_{1}^{-1}, z_{2}^{-1}, z_{12}^{-1},  \ze_{1}, \ze_{2}, \ze_{12}] \,,
\end{equation}
with the following properties$:$
\begin{align}
\label{iota10}
a(z_1)b(z_2)v&=\iota_{z_1,z_2}\Y_{a,b,v}(z_1,z_2)\,, \\
b(z_2)a(z_1)v&=(-1)^{p(a)p(b)} \iota_{z_2,z_1}\Y_{a,b,v}(z_1,z_2)\,,
\label{iota11}
\end{align}
and
\begin{equation}\label{iota12}
\sum_{i\in\ZZ_{+}, \, n\in\ZZ }\frac{(-1)^{i}}{i!}\, \ze_{12}^{i}z_{12}^{-n-1}\bigl(\NN^{i }a_{(n+\NN)}b\bigr)(z_{2})v=\iota_{z_2,z_{12}}\Y_{a,b,v}(z_1,z_2)\, .
\end{equation}
Furthermore, there exists\/ $N\in\ZZ_+$, depending only on\/ $a$ and\/ $b$, such that
\begin{equation}\label{iota13}
z_{12}^N \Y_{a,b,v}(z_1,z_2)\in V[\![z_{1},z_{2}]\!][z_{1}^{-1}, z_{2}^{-1}, \ze_{1}, \ze_{2}, \ze_{12}] \,, \qquad v\in V\,.
\end{equation}
\end{theorem}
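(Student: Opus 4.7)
The plan is to construct $\Y_{a,b,v}$ explicitly by lifting the per-$\NN^i$ locality identities and assembling them with appropriate binomial weights. I first pick $N\in 2\ZZ_+$ large enough that \eqref{logf15} holds for $\al=a\otimes b$ and all relevant $i\in\ZZ_+$; only finitely many $i$ matter since $\NN^r(a\otimes b)=0$ for some $r$. Writing $A_k:=z_{12}^N\mu(\NN^k(a\otimes b))(z_1,z_2)v$ (expansion in $|z_1|>|z_2|$) and $B_k$ for the swapped analog in $|z_2|>|z_1|$, the $\NN^i$-locality identity reads
\[
\sum_{j\ge0}\frac{\vartheta_{12}^j}{j!}\,A_{i+j}=(-1)^{p(a)p(b)}\sum_{j\ge0}\frac{\vartheta_{21}^j}{j!}\,B_{i+j}.
\]
Its left-hand side lies in $V(\!(z_1)\!)(\!(z_2)\!)[\ze_1,\ze_2]$ and the right-hand side in $V(\!(z_2)\!)(\!(z_1)\!)[\ze_1,\ze_2]$; by the standard intersection computation for half-plane series rings, the common value $F_i$ belongs to $V[\![z_1,z_2]\!][z_1^{-1},z_2^{-1},\ze_1,\ze_2]$.

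Next, I set
\[
\Y_{a,b,v}(z_1,z_2):=z_{12}^{-N}\sum_{i\ge0}\frac{(-\ze_{12})^i}{i!}\,F_i,
\]
a finite sum in $V[\![z_1,z_2]\!][z_1^{-1},z_2^{-1},z_{12}^{-1},\ze_1,\ze_2,\ze_{12}]$, which makes \eqref{iota13} immediate. For \eqref{iota10}, I use $\iota_{z_1,z_2}\ze_{12}=\vartheta_{12}$ and $\iota_{z_1,z_2}F_i=\sum_j\vartheta_{12}^j A_{i+j}/j!$ to compute
\[
\iota_{z_1,z_2}(z_{12}^N\Y_{a,b,v})=\sum_{k\ge0}\vartheta_{12}^k A_k\sum_{i+j=k}\frac{(-1)^i}{i!\,j!}=A_0
\]
by the binomial identity $\sum_{i=0}^k(-1)^i\binom{k}{i}=\de_{k,0}$; dividing by $z_{12}^N$ gives $a(z_1)b(z_2)v$, and the symmetric calculation with $\iota_{z_2,z_1}\ze_{12}=\vartheta_{21}$ yields \eqref{iota11}. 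For \eqref{iota12}, I check that $\iota_{z_2,z_{12}}F_i$ coincides with the Taylor expansion of $\mu(z_{12}^N e^{\vartheta_{12}\NN}\NN^i(a\otimes b))(z_1,z_2)v$ in $z_{12}$ around $z_1=z_2$ — here one uses that in the $(z_2,z_{12})$ coordinates $\vartheta_{12}$ matches the formal symbol $\ze_{12}$. By the defining formula \eqref{logf16} of the $(n+\NN)$-th products, this Taylor expansion equals $\sum_{n\in\ZZ}z_{12}^{N-1-n}(\NN^i a_{(n+\NN)}b)(z_2)v$, and assembling with the weights $(-\ze_{12})^i/i!$ then produces exactly \eqref{iota12}.

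The main technical step is the first one: simultaneously producing the lifts $F_i$. It rests on the standard identity $V(\!(z_1)\!)(\!(z_2)\!)\cap V(\!(z_2)\!)(\!(z_1)\!)=V[\![z_1,z_2]\!][z_1^{-1},z_2^{-1}]$ applied at every level of the nilpotent tower of $\NN$ acting on $a\otimes b$; the crucial enabling fact is that local nilpotency of $\NN$ truncates both $e^{\vartheta_{12}\NN}$ and $e^{\vartheta_{21}\NN}$ to polynomials of bounded degree, so each $\NN^i$-locality equation is genuinely a finite equality to which the intersection argument applies.
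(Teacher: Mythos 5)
Your proposal is correct and follows essentially the same route as the paper: you define $\Y_{a,b,v}=z_{12}^{-N}\,\mu\bigl(z_{12}^{N}e^{\vartheta_{12}\NN}e^{-\ze_{12}\NN}a(z_1)\otimes b(z_2)\bigr)v$ (your $F_i$ are exactly the paper's terms $\mu(z_{12}^{N}e^{\vartheta_{12}\NN}\NN^i a(z_1)\otimes b(z_2))v$, shown to lie in $V[\![z_1,z_2]\!][z_1^{-1},z_2^{-1},\ze_1,\ze_2]$ by the same locality-plus-intersection argument), and you recover \eqref{iota10}--\eqref{iota12} by the same substitutions $\iota_{z_1,z_2}\ze_{12}=\vartheta_{12}$, $\iota_{z_2,z_1}\ze_{12}=\vartheta_{21}$ and the Taylor/$(n+\NN)$-product identification, merely writing the exponential cancellations as explicit binomial sums. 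No gaps; only the remark that ``$\vartheta_{12}$ matches $\ze_{12}$'' in the $(z_2,z_{12})$ step is loosely phrased, the actual mechanism being that $\ze_{12}$ passes through $\iota_{z_2,z_{12}}$ untouched while $F_i$ is Taylor-expanded via \eqref{logf16}.
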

\begin{proof}
Let $N$ be a positive integer large enough so that \eqref{logf15} is satisfied for all $i\in \ZZ_{+}$. Then we define
\begin{equation}\label{yabv}
\begin{split}
\Y_{a,b,v}(z_1,z_2)
&= z_{12}^{-N} \mu\bigl(z_{12}^{N}e^{\vartheta_{12} \NN} e^{-\ze_{12}\NN} a(z_1)\otimes b(z_2)\bigr)v \\
&=\sum_{i\geq 0}\frac{(-1)^{i}}{i!}\ze_{12}^{i}z_{12}^{-N} \mu\bigl(z_{12}^{N}e^{\vartheta_{12} \NN}\NN^{i}a(z_1)\otimes b(z_2)\bigr)v\, .
\end{split}
\end{equation}
Notice that the above sum is in fact finite, because $\NN$ is nilpotent on $a\otimes b$. Moreover, from locality \eqref{logf15}, we have
\[ \mu\bigl(z_{12}^{N}e^{\vartheta_{12} \NN}\NN^{i}a(z_1)\otimes b(z_2)\bigr)v\in V[\![z_1,z_2]\!][z_1^{-1},z_2^{-1}, \ze_1, \ze_2] \,,\]
which proves \eqref{iota13}.
Then using \eqref{iota} and \eqref{iota2} we get 
\begin{equation*}
\begin{split}
a(z_1)b(z_2)v
&=z_{12}^{-N} \mu\bigl(z_{12}^{N} e^{\vartheta_{12}\NN}e^{-\vartheta_{12}\NN}a(z_1)\otimes b(z_2)\bigr)v\\
&=\iota_{z_1,z_2} z_{12}^{-N} \mu\bigl(z_{12}^{N} e^{\vartheta_{12}\NN}e^{-\ze_{12}\NN}a(z_1)\otimes b(z_2)\bigr)v \\
&=\iota_{z_1,z_2} \Y_{a,b,v}(z_1,z_2)\,.
\end{split}
\end{equation*}
Similarly, from \eqref{logf15}, \eqref{iota4} and \eqref{iota5} we have
\begin{align*}
b(z_2)a(z_1)v
&= z_{12}^{-N} \mu\bigl(z_{12}^{N} e^{\vartheta_{21}\NN}e^{-\vartheta_{21}\NN} b(z_2)\otimes a(z_1)\bigr)v\\
&= \iota_{z_2,z_1} z_{12}^{-N} \mu\bigl(z_{12}^{N} e^{\vartheta_{21}\NN}e^{-\ze_{12}\NN}b(z_2)\otimes a(z_1)\bigr)v \\
&=(-1)^{p(a)p(b)} \iota_{z_2,z_1} z_{12}^{-N} \mu\bigl(z_{12}^{N} e^{\vartheta_{12}\NN}e^{-\ze_{12}\NN}a(z_1)\otimes b(z_2)\bigr)v \\
&=(-1)^{p(a)p(b)} \iota_{z_2,z_1} \Y_{a,b,v}(z_1,z_2)\,.
\end{align*}
Finally, from \eqref{logf17-al}, \eqref{logf17} and Taylor's formula, we have for every $i\in\ZZ_+$:
\begin{align*}
\sum_{n\in\ZZ} &z_{12}^{-n-1}(\NN^{i }a_{(n+\NN)}b)(z_{2})v \\
&= \sum_{n\in\ZZ} z_{12}^{-n-1} D_{\z_1}^{(N-1-n)}\mu \Bigl(\z_{12}^Ne^{\vartheta_{12}\NN} \NN^{i } a(z_1) \otimes b(z_2) \Bigr)v\Big|_{\z_1=\z_2} \\
&= z_{12}^{-N} \iota_{z_2,z_{12}} \mu \bigl(\z_{12}^Ne^{\vartheta_{12}\NN} \NN^{i } a(z_1) \otimes b(z_2) \bigr) \,.
\end{align*}
Multiplying this by $(-1)^{i} \ze_{12}^{i} / i!$ and summing over $i\in\ZZ_+$ proves \eqref{iota12}.
\end{proof}

\begin{remark}\label{re2.17}
In the setting of \thref{th2.16} and its proof, consider the linear map
\begin{equation*}
\Y\colon\V\otimes\V \to \Hom\bigl(V, V[\![z_{1},z_{2}]\!][z_{1}^{-1}, z_{2}^{-1}, z_{12}^{-1},  \ze_{1}, \ze_{2}, \ze_{12}]\bigr) \,,
\end{equation*}
defined by
\begin{equation*}
\Y(a\otimes b)v =  \Y_{a,b,v}(z_1,z_2)\,.
\end{equation*}
Then $\Y$ has the property that
\begin{equation*}
\Y\bigl(e^{\ze_{12}\NN}(a\otimes b)\bigr) = z_{12}^{-N} \mu\bigl(z_{12}^{N}e^{\vartheta_{12} \NN} a(z_1)\otimes b(z_2)\bigr)
\end{equation*}
is independent of $\ze_{12}$. Conversely, this property together with \eqref{iota10}, \eqref{iota11} and \eqref{iota13}
implies the locality of $\V$.
\end{remark}

Let again $\V\subset\LF(V)$ be a local space for some braiding map $\N$.
For $a,b\in\V$, their \emph{operator product expansion} (OPE) is defined as the formal series
\begin{equation*}
\sum_{ i\in\ZZ_{+}, \, n\in\ZZ } \frac{(-1)^{i}}{i!}\, \ze_{12}^{i}z_{12}^{-n-1} (\NN^{i }a_{(n+\NN)}b)(z_{2})\, .
\end{equation*}
Note that this is not equal to $a(z_1)b(z_2)$ as it is an expansion in a different domain; compare \eqref{iota10} and \eqref{iota12}.
The OPE is denoted as
\begin{equation*}
a(z_1)b(z_2)\sim \sum_{ i\in\ZZ_{+}, \, n\in\ZZ } \frac{(-1)^{i}}{i!}\, \ze_{12}^{i}z_{12}^{-n-1} (\NN^{i }a_{(n+\NN)}b)(z_{2})\, .
\end{equation*}
In the physics literature, it is customary to write the OPE as follows:
\begin{equation*}
a(z_1)b(z_2)\sim \sum_{ i\in\ZZ_{+}, \, n\in\ZZ } \frac{\log^{i}(z_1-z_2)}{(z_{1}-z_{2})^{n+1}}c_{i,n}(z_{2})\, , 
\end{equation*}
where 
\begin{equation*}
c_{i,n}(z)=\frac{(-1)^i}{i!} (\NN^{i }a_{(n+\NN)}b)(z) \in \LF(V) \,,
\end{equation*}
or at $z_2=0$ in the form
\begin{equation*}
a(z)b(0)\sim \sum_{ i\in\ZZ_{+}, \, n\in\ZZ } \frac{\log ^iz}{z^{n+1}}c_{i,n}(0) \,;
\end{equation*}
see also \eqref{ope5}, \eqref{ope6} below.

\subsection{Translation covariant logarithmic fields}\label{s2.3}
In this subsection, as with ordinary vertex algebras, we will assume that our superspace $V$ is equipped with an even vector $\vac\in V_{\bar 0}$ (called the \emph{vacuum vector})
and an even linear operator $T\in\End(V)_{\bar 0}$ (called the \emph{translation operator}) such that $T\vac=0$. 
As before, let $\NN$ be a braiding map on $\LF(V)$ satisfying \eqref{1.2} and \eqref{logfdn}.

\begin{definition}\label{dtrcov}
A logarithmic field $a\in\LF(V)$ is called \emph{translation covariant} if 
 \begin{equation}\label{logf20}
 [T,a(z)]=D_{z}a(z)\, .
 \end{equation}  
 We denote by $\LFT(V)$ the space of translation covariant logarithmic fields.  
 \end{definition}
 
Note that $\LFT(V)$ is a $D_{z}$-invariant subspace of $\LF(V)$. To ensure it is invariant under the operators $\Phi_i$ and $\Psi_i$ from \eqref{logf14},
we will impose the condition that
 \begin{equation}\label{logf21}
 \bigl[\ad(T)\otimes I, \NN\bigr] = \bigl[I\otimes \ad(T), \NN\bigr]=0\, . 
 \end{equation}
Notice that the second equality follows from the first and the symmetry of $\NN$.
As a consequence of \eqref{logfdn} and \eqref{logf21}, $\NN$ restricts to a braiding map on $\LFT(V)$.

\begin{lemma}\label{l2.16} 
Let\/ $\NN$ be a braiding map on\/ $\LF(V)$ satisfying\/ \eqref{logfdn} and\/ \eqref{logf21}. Then for every\/ local subspace\/ $\V\subset\LFT(V)$ and any\/ $a,b\in\V$,
we have\/ $a_{(n+\NN)}b \in\LFT(V)$ for all\/ $n\in\ZZ$.
\end{lemma}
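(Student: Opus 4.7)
The plan is to apply $\ad(T)$ to the defining expression \eqref{logf16} for $(a_{(n+\NN)}b)(z)$ and show that the result equals $D_z(a_{(n+\NN)}b)(z)$. Since $T$ is even, $\ad(T)$ acts as a derivation of the composition product $\mu$, so for any $\alpha\in\LF(V)\otimes\LF(V)$ we have
\begin{equation*}
[T,\mu(\alpha)] = \mu\bigl((\ad(T)\otimes I + I\otimes\ad(T))\alpha\bigr).
\end{equation*}
Set $\alpha = z_{12}^{N}e^{\vartheta_{12}\NN}(a(z_1)\otimes b(z_2))$. The first step is to push the derivation $\ad(T)\otimes I + I\otimes\ad(T)$ through the prefactor $z_{12}^{N}e^{\vartheta_{12}\NN}$ onto $a(z_1)\otimes b(z_2)$: this is allowed because the derivation commutes trivially with the scalars $z_{12}^{N}$ and $\vartheta_{12}$, and commutes with $\NN$ (hence with $e^{\vartheta_{12}\NN}$) by hypothesis \eqref{logf21}. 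Using the translation covariance of $a$ and $b$, its action on the tensor becomes
\begin{equation*}
(\ad(T)\otimes I + I\otimes\ad(T))(a(z_1)\otimes b(z_2)) = (D_{z_1}+D_{z_2})(a(z_1)\otimes b(z_2)).
\end{equation*}

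The second step is the reverse: push $D_{z_1}+D_{z_2}$ back out past $z_{12}^{N}e^{\vartheta_{12}\NN}$. This requires $(D_{z_1}+D_{z_2})z_{12}^{N}=0$, which is immediate, and $(D_{z_1}+D_{z_2})\vartheta_{12}=0$, which is the elementary identity
\begin{equation*}
D_{z_1}\vartheta_{12} = \sum_{j\geq 0}z_1^{-j-1}z_2^{j} = -D_{z_2}\vartheta_{12}
\end{equation*}
obtained directly from \eqref{logf7}; together with $[D_{z_1}+D_{z_2},\NN]=0$ from \eqref{logfdn}, this shows that $D_{z_1}+D_{z_2}$ commutes with the operator $z_{12}^{N}e^{\vartheta_{12}\NN}$. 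Hence
\begin{equation*}
[T,\mu(z_{12}^{N}e^{\vartheta_{12}\NN}(a(z_1)\otimes b(z_2)))] = (D_{z_1}+D_{z_2})\mu(z_{12}^{N}e^{\vartheta_{12}\NN}(a(z_1)\otimes b(z_2))).
\end{equation*}

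Finally, apply $D_{z_1}^{(N-1-n)}$ to both sides (it commutes with $D_{z_1}+D_{z_2}$) and restrict to $z_1=z_2=z$ with $\ze_1=\ze_2=\ze$. By the chain rule, the restriction of $D_{z_1}+D_{z_2}$ to the diagonal equals $D_z$, yielding $[T,(a_{(n+\NN)}b)(z)] = D_z(a_{(n+\NN)}b)(z)$, i.e.\ $a_{(n+\NN)}b\in\LFT(V)$. No step presents a serious obstacle: the two compatibility conditions \eqref{logfdn} and \eqref{logf21} were designed precisely so that $D_z$ and $\ad(T)$ pass cleanly through the braiding factor $e^{\vartheta_{12}\NN}$, while the locality of $\V$ is used only to ensure that \eqref{logf16} gives a well-defined logarithmic field to begin with.
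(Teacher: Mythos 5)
Your proposal is correct and follows essentially the same route as the paper: apply $\ad(T)$ as a derivation of $\mu$, use translation covariance of $a,b$ together with $(D_{z_1}+D_{z_2})\vartheta_{12}=0$ and the compatibility of $\NN$ with $\ad(T)$ and $D_z$ (conditions \eqref{logf21}, \eqref{logfdn}) to convert $\ad(T)$ into $D_{z_1}+D_{z_2}$, and then restrict to the diagonal to obtain $D_z$. The only cosmetic difference is that you push the derivation through the factor $z_{12}^{N}e^{\vartheta_{12}\NN}$ and back in two explicit passes, whereas the paper does it in one step; the underlying identities are identical.
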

\begin{proof} 
Since $\ad(T)$ is a derivation of the product $\mu$ in $\End(V)$, we have:
\begin{align*}
\ad(T) \, & \mu\bigl( e^{\vartheta_{12} \NN} a(z_1)\otimes b(z_2) \bigr) \\
&=\mu \bigl(\ad(T\otimes I+I\otimes T) e^{\vartheta_{12} \NN} a(z_1)\otimes b(z_2)\bigr)\\
&=(D_{z_1}+D_{z_2})\mu\bigl( e^{\vartheta_{12} \NN} a(z_1)\otimes b(z_2)\bigr),
\end{align*}
where in the last identity we used the translation covariance of $a,b$ and $(D_{z_1}+D_{z_2})\vartheta_{12}=0$. Note that for an arbitrary function $f$ in two variables,
\[D_{z}f(z,z)=(D_{z_1}+D_{z_2})f(z_1,z_2)\big|_{z_1=z_2=z}\, .\]
Applying Definition \ref{d2.10}, we thus obtain  
\begin{equation*}
\ad(T) (a(z)_{(n+\NN)}b(z))=D_{z}(a(z)_{(n+\NN)}b(z))\,,
\end{equation*}
as claimed.
\end{proof}

\begin{lemma}\label{ltrcvac} 
For every translation covariant field\/ $a(z)\in\LFT(V)$, we have\/ $a(z)\vac\in V[\![z]\!]$.
\end{lemma}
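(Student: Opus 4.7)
The plan is to use translation covariance applied to the vacuum to derive a recurrence on the coefficients of $a(z)\vac$ and then argue that the equations together with the polynomial nature of logarithmic fields in $\ze$ force $a(z)\vac$ to have no negative powers of $z$ and no $\ze$ dependence.

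First I expand $a(z)\vac = \sum_{n\in\ZZ} b_n(\ze)\, z^{-n-1}$ with $b_n(\ze) := a_n(\ze)\vac \in V[\ze]$, and note that for each fixed $n$ the polynomial $b_n(\ze)$ has some bounded degree, while for each fixed power of $\ze$ the coefficient vanishes for $n\gg0$. Applying the translation covariance \eqref{logf20} to $\vac$ and using $T\vac=0$, I get $T\,a(z)\vac = D_z\, a(z)\vac = (\d_z+z^{-1}\d_\ze)\, a(z)\vac$. Equating coefficients of $z^{-n-1}$ yields the recurrence
\begin{equation*}
T\, b_n(\ze) = (\d_\ze - n)\, b_{n-1}(\ze) \,, \qquad n\in\ZZ \,.
\end{equation*}

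The key observation is the following fact about the operator $\d_\ze - n$ acting on the polynomial space $V[\ze]$: for $n\neq 0$ it is injective (if $(k+1)c_{k+1}=nc_k$ for all $k\ge 0$ with $c_k=0$ eventually, then backward substitution forces $c_0=0$), and it maps the constants $V\subset V[\ze]$ bijectively onto $V$ (since $(\d_\ze-n)c = -nc$ for constant $c$). For $n=0$, the kernel of $\d_\ze$ is exactly the constants $V$.

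Now I proceed by descending induction on $n$, starting from the fact that $b_n(\ze)=0$ for $n$ sufficiently large. For the inductive step with $n\ge 1$: if $b_n(\ze)=0$ then $(\d_\ze-n)b_{n-1}(\ze)=0$, and injectivity of $\d_\ze-n$ gives $b_{n-1}(\ze)=0$. This proves $b_n(\ze)=0$ for all $n\ge 0$. The case $n=0$ then gives $\d_\ze\, b_{-1}(\ze)=0$, so $b_{-1}\in V$ is a constant. Finally, for $n\le -1$, assume inductively that $b_n\in V$ is constant; then the recurrence reads $(\d_\ze-n)b_{n-1}(\ze) = T\,b_n \in V$, and by the surjectivity of $\d_\ze-n$ onto $V$ together with injectivity on $V[\ze]$, the unique solution lies in $V$, namely $b_{n-1}= -T\,b_n/n$.

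Combining these three ranges, $a(z)\vac = \sum_{n\le -1} b_n\, z^{-n-1}$ with each $b_n\in V$, so $a(z)\vac \in V[\![z]\!]$. The main obstacle I anticipate is bookkeeping the recurrence carefully at the boundary case $n=0$ and ensuring the polynomiality in $\ze$ is genuinely used (via the injectivity argument), which is exactly what forces higher $\ze$-powers to vanish despite the apparent underdetermination of the recurrence.
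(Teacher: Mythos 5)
Your proof is correct and follows essentially the same route as the paper: derive the recurrence $(\d_\ze-n)\,b_{n-1}(\ze)=T\,b_n(\ze)$ from translation covariance and $T\vac=0$, use injectivity of $\d_\ze-n$ on $V[\ze]$ for $n\ne0$ to kill all $b_n$ with $n\ge0$, and then descend to show each $b_n$ with $n<0$ is a constant in $V$. The only cosmetic difference is that the paper phrases the first step as choosing the largest $N$ with $a_{N-1}\ne0$ rather than as a descending induction, but the content is identical.
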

\begin{proof} 
From \eqref{logf20} and $T\vac=0$, we derive $D_{z}a(z)\vac = T a(z)\vac$. Let us write
\begin{equation*}
a(z)\vac = \sum_{n\in\ZZ} a_n(\ze) \z^{-n-1} \,, \qquad a_n(\ze) \in V[\ze] \,,
\end{equation*}
where $a_n(\ze) = 0$ for $n\gg0$. Then we obtain the equations
\begin{equation*}
(\partial_\ze-n) a_{n-1}(\ze) = T a_n(\ze) \,, \qquad n\in\ZZ \,.
\end{equation*}
Pick the largest $N\in\ZZ$ such that $a_{N-1}(\ze) \ne 0$ and $a_n(\ze) = 0$ for $n\ge N$. 
Hence,
\begin{equation*}
(\partial_\ze-N) a_{N-1}(\ze) = 0 \,,
\end{equation*}
which is impossible for a polynomial, unless $N=0$ and $a_{-1}(\ze) \in V$ is a constant.
Thus, $a_n(\ze)=0$ for $n\ge0$.
Next, from
\begin{equation*}
(\partial_\ze+1) a_{-2}(\ze) = T a_{-1}(\ze) \in V \,,
\end{equation*}
we see that $a_{-2}(\ze) \in V$. In the same way, by induction, we get $a_n(\ze) \in V$ for all $n<0$.
\end{proof}

As a consequence of \leref{ltrcvac}, we get an even linear map from $\LFT(V)$ to $V$ defined by
\begin{equation}\label{logf22b}
\Theta\colon \LFT(V) \to V \,, \qquad
\Theta(a) = a(z)\vac\big|_{z=0} \,.
\end{equation}
Then from the equation $D_{z}a(z)\vac = T a(z)\vac$, we obtain
\begin{equation}\label{logf22c}
a(z)\vac = e^{zT} \Theta(a) \,,\qquad a\in\LFT(V) \,,
\end{equation}
like for ordinary vertex algebras.

\begin{lemma}\label{l2.18} 
Let\/ $\NN$ be a braiding map on\/ $\LF(V)$ satisfying\/ \eqref{logfdn} and\/ \eqref{logf21}. 
Then for every\/ local subspace\/ $\V\subset\LFT(V)$ and\/ $a,b\in\V$, we have
\begin{equation}\label{aThetab}
a(z) \Theta(b) = \sum_{i\in \ZZ_{+}, \, n\in \ZZ}\frac{(-1)^{i}}{i!}\ze^{i} z^{-n-1} \Theta\bigl( \NN^{i}a_{(n+\NN)}b \bigr).
\end{equation}
Furthermore, if\/ $\Theta(a)=0$, then\/ $a(z) \Theta(b) = 0$ for all\/ $b\in\V$.
\end{lemma}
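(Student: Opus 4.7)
My plan is to apply \thref{th2.16} with $v = \vac$ and then specialize both of its resulting identities at $z_2 = 0$. The hypotheses \eqref{logfdn} and \eqref{logf21} imply that each $\Phi_i, \Psi_i$ preserves $\LFT(V)$; combined with the component-wise $\NN$-invariance of $\V$ and \leref{l2.16}, this yields $\NN^i a_{(n+\NN)}b \in \LFT(V)$, and then \leref{ltrcvac} gives $(\NN^i a_{(n+\NN)}b)(z)\vac \in V[\![z]\!]$. Thus $b(z_2)\vac$ and every $(\NN^i a_{(n+\NN)}b)(z_2)\vac$ contain no negative powers of $z_2$ and no $\ze_2$, so the specialization $z_2=0$ is meaningful on both sides of \eqref{iota10} and of \eqref{iota12}.

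For \eqref{aThetab}: setting $z_2=0$ in the left-hand side of \eqref{iota12} gives $\sum_{i,n}\frac{(-1)^i}{i!}\ze_{12}^i z_{12}^{-n-1}\Theta(\NN^i a_{(n+\NN)}b)$, while setting $z_2=0$ in \eqref{iota10} gives $a(z_1)\Theta(b)$. To reconcile these, I invoke the explicit formula \eqref{yabv}: the quantity
$z_{12}^N \Y_{a,b,\vac}(z_1,z_2) = \mu(z_{12}^N e^{\vartheta_{12}\NN}e^{-\ze_{12}\NN}a(z_1)\otimes b(z_2))\vac$
lies in $V[\![z_1,z_2]\!][z_1^{-1},\ze_1,\ze_{12}]$, hence admits the clean substitution $z_2=0$. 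Using $\vartheta_{12}|_{z_2=0}=\ze_1$ and $z_{12}|_{z_2=0}=z_1$, the identification $\ze_{12}=\ze_1$ collapses the factors $e^{\vartheta_{12}\NN}$ and $e^{-\ze_{12}\NN}$ to the identity, so the two specializations of $\Y_{a,b,\vac}(z_1,z_2)|_{z_2=0}$ (obtained from $\iota_{z_1,z_2}$ and $\iota_{z_2,z_{12}}$ respectively) coincide, yielding \eqref{aThetab} upon writing $z_1 = z$, $\ze_1 = \ze$.

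For the second claim, $\Theta(a)=0$ combined with \eqref{logf22c} gives $a(z_1)\vac=0$, hence $b(z_2)a(z_1)\vac=0$ as a composition of operators; by \eqref{iota11}, $\iota_{z_2,z_1}\Y_{a,b,\vac}(z_1,z_2)=0$. The main step is then the injectivity of $\iota_{z_2,z_1}$ on the subring where $\Y_{a,b,\vac}$ lives: writing $z_{12}^N \Y_{a,b,\vac}=\sum_{j,l} c_{j,l}\,\ze_{12}^j\ze_1^l$ with $c_{j,l}\in V[\![z_1,z_2]\!][z_1^{-1}]$, the substitution $\ze_{12}\mapsto\vartheta_{21}=\ze_2+r$ (with $r\in z_1\CC[z_1,z_2^{-1}]$) places $c_{J,L}$ at the top-degree monomial $\ze_2^J\ze_1^L$ (for $J,L$ maximal with $c_{J,L}\ne0$), inductively forcing all $c_{j,l}=0$. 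Hence $\Y_{a,b,\vac}=0$, so $a(z_1)b(z_2)\vac=\iota_{z_1,z_2}\Y_{a,b,\vac}=0$, and setting $z_2=0$ yields $a(z)\Theta(b)=0$. The main obstacle is the careful bookkeeping of formal variables at $z_2=0$ in the first part (the collapse of the exponential factors under $\ze_{12}=\ze_1$), and the leading-order injectivity argument in the second.
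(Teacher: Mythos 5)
Your ``furthermore'' part is essentially the paper's own argument (extract the top power of $\ze_2$ after substituting $\vartheta_{21}$, conclude all coefficients vanish), and it is correct. The gap is in the proof of the main identity \eqref{aThetab}. Your one-sided regularity claim, $z_{12}^{N}\Y_{a,b,\vac}\in V[\![z_1,z_2]\!][z_1^{-1},\ze_1,\ze_{12}]$, suffices to set $z_2=0$ in the $\iota_{z_1,z_2}$-expansion \eqref{iota10}, but it does \emph{not} make the substitution $z_2=0$ legitimate in the $\iota_{z_2,z_{12}}$-expansion occurring in \eqref{iota12}: since $\iota_{z_2,z_{12}}z_1^{-1}=\sum_{j\ge0}(-1)^{j}z_{12}^{j}z_2^{-j-1}$ and $\iota_{z_2,z_{12}}\ze_1=\ze_2-\sum_{j\ge1}\frac{(-1)^{j}}{j}z_{12}^{j}z_2^{-j}$, any $z_1^{-1}$ or $\ze_1$ left in $\Y_{a,b,\vac}$ produces unboundedly negative powers of $z_2$ (and a stray $\ze_2$). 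So the assertion that ``the two specializations coincide'' is exactly the content of \eqref{aThetab}, not a consequence of the collapse of $e^{\vartheta_{12}\NN}e^{-\ze_{12}\NN}$ at $z_2=0$, $\ze_{12}=\ze_1$ --- that collapse only reproduces the $\iota_{z_1,z_2}$-side computation a second time. Concretely, if $c_i(z_1,z_2):=\mu\bigl(z_{12}^{N}e^{\vartheta_{12}\NN}\NN^{i}a(z_1)\otimes b(z_2)\bigr)\vac$ contained a cross term $u\,z_1^{-m}z_2^{m}$ or $u\,\ze_1 z_2^{m}$ with $m>0$, it would contribute to $\Theta\bigl(\NN^{i}a_{(n+\NN)}b\bigr)$ (which is a diagonal evaluation $z_1=z_2$, $\ze_1=\ze_2$, then $z=0$) but not to $z_1^{-N}c_i(z_1,0)$, i.e.\ to one side of \eqref{aThetab} and not the other; your argument never rules such terms out.

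The missing ingredient is precisely the paper's second use of locality: by \eqref{logf15}, $c_i$ also equals $\pm\,\mu\bigl(z_{12}^{N}e^{\vartheta_{21}\NN}\NN^{i}b(z_2)\otimes a(z_1)\bigr)\vac$, and since the components of $\NN$ preserve $\LFT(V)$ and $\V$ is component-wise invariant, \leref{ltrcvac} applied to the fields acting on the $a$-side shows $c_i$ has no $\ze_1$ and no negative powers of $z_1$; hence $c_i\in V[\![z_1,z_2]\!]$. With this two-sided regularity, the coefficient of $z_1^{N-n-1}$ in $c_i(z_1,0)$ equals $\bigl(D_{z_1}^{(N-1-n)}c_i(z_1,z_2)\big|_{z_1=z_2=z}\bigr)\big|_{z=0}=\Theta\bigl(\NN^{i}a_{(n+\NN)}b\bigr)$, which is how the paper deduces \eqref{aThetab} from \eqref{iota10} without ever specializing the $\iota_{z_2,z_{12}}$-expansion. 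If you insert this step, your proof becomes essentially the paper's; as written, the reconciliation of the two expansions --- the heart of the lemma --- is asserted rather than proved.
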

\begin{proof} 
We will use \thref{th2.16}. 
Let again $N$ be a positive integer large enough so that \eqref{logf15} is satisfied for all $i\in \ZZ_{+}$. 
If we expand $z_{12}^{\NN} \NN^i a(z_1)b(z_2)$ as in the right-hand side of \eqref{Nexp}, it follows from \leref{ltrcvac} that
\begin{equation*}
\bigl( z_{12}^{N+\NN} \NN^i a(z_1)b(z_2) \bigr)\vac \in V[\![z_{1},z_{2}]\!][z_{1}^{-1}, \ze_{1}]
\end{equation*}
is independent of $\ze_2$ and has only non-negative powers of $z_2$. Due to locality \eqref{logf15}, we conclude that
\begin{equation*}
c_i(z_1,z_2) := \bigl( z_{12}^{N+\NN} \NN^i a(z_1)b(z_2) \bigr)\vac \in V[\![z_{1},z_{2}]\!] \,.
\end{equation*}
Then from \eqref{yabv} we find
\begin{equation*}
\Y_{a,b,\vac}(z_1,z_2)
=\sum_{i\geq 0}\frac{(-1)^{i}}{i!}\ze_{12}^{i}z_{12}^{-N} c_i(z_1,z_2) \, .
\end{equation*}
By \eqref{iota10}, we have
\begin{equation*}
a(z_1) \Theta(b) = \iota_{z_1,z_2} \Y_{a,b,\vac}(z_1,z_2) \big|_{z_2=0}
=\sum_{i\geq 0}\frac{(-1)^{i}}{i!}\ze_1^i z_{1}^{-N} c_i(z_1,0) \,,
\end{equation*}
using that $\iota_{z_1,z_2} \ze_{12}|_{z_2=0} = \ze_1$ and $\iota_{z_1,z_2} z_{12}^{-N}|_{z_2=0} = z_1^{-N}$
by \eqref{iota}, \eqref{iota2}.
By definition, the coefficient in front of $z_1^{N-n-1}$ in the Taylor expansion of $c_i(z_1,0)$ is
\begin{align*}
D_{z_1}^{(N-n-1)} c_i(z_1,0) \big|_{z_1=0}
&= \bigl( D_{z_1}^{(N-n-1)} c_i(z_1,z_2) \big|_{z_1=z_2=z} \bigr) \big|_{z=0} \\
&= \Theta\bigl( \NN^{i}a_{(n+\NN)}b \bigr),
\end{align*}
which proves \eqref{aThetab}.

For the second claim in the lemma, suppose that $\Theta(a)=0$. Then $a(z)\vac=0$ by \eqref{logf22c}. 
Applying \eqref{iota11} and using \eqref{iota4}, we get
\begin{align*}
0 &= (-1)^{p(a)p(b)} z_{12}^{N} b(z_2) a(z_1)\vac \\
&= \iota_{z_2,z_1} z_{12}^{N} \Y_{a,b,\vac}(z_1,z_2) 
=\sum_{i\geq 0}\frac{(-1)^{i}}{i!} \vartheta_{21}^i c_i(z_1,z_2) \,.
\end{align*}
This sum is finite because $\NN$ is nilpotent on $a\otimes b$.
We want to show that all $c_i(z_1,z_2) = 0$. Assume, on the contrary, that some $c_r \ne 0$
and $c_i=0$ for all $i>r$. Then the highest power of $\ze_2$ in the above sum is $\ze_2^r$
and its coefficient is non-zero, a contradiction. Therefore, all $c_i=0$, and as a consequence
$a(z) \Theta(b) = 0$.
\end{proof}

For any field $a\in \LF(V)$, we have the sequence of linear maps $a_{(n+\N)}\in\End(V)$, $n\in\ZZ$, given by
\begin{equation}\label{logf24}
a(z)\big|_{\ze=0}=\sum_{n\in \ZZ} z^{-n-1} a_{(n+\N)} \,.
 \end{equation}
We remark that the notation $a_{(n+\NN)}b\in \LF(V)$ means the $(n+\NN)$-th product of two logarithmic fields $a,b$ (Definition \ref{d2.10}), while the notation $a_{(n+\N)}v\in V$ 
means the vector obtained from the evaluation of the linear operator $a_{(n+\N)}$ on $v\in V$. In the case of logarithmic vertex algebras, these two notions will be related in the next section.
We can already see this relationship in the following corollary of \eqref{aThetab}.

\begin{corollary}\label{l2.18b} 
In the setting of \leref{l2.18}, we have
\begin{equation}\label{Thanb}
\Theta(a_{(n+\NN)}b) = a_{(n+\N)} \Theta(b) \,, \qquad a,b\in\V \,, \;\; n\in\ZZ \,.
\end{equation}
\end{corollary}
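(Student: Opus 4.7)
The proof should be a direct consequence of Lemma \ref{l2.18} combined with the definition \eqref{logf24} of the operators $a_{(n+\N)}$. My plan is to compare two expressions for $a(z)\Theta(b)$ with $\ze$ set to zero and match coefficients of $z^{-n-1}$.

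First, I would take the formula \eqref{aThetab} from Lemma \ref{l2.18},
\begin{equation*}
a(z) \Theta(b) = \sum_{i\in \ZZ_{+}, \, n\in \ZZ}\frac{(-1)^{i}}{i!}\ze^{i} z^{-n-1} \Theta\bigl( \NN^{i}a_{(n+\NN)}b \bigr),
\end{equation*}
and set $\ze=0$. Since only the $i=0$ terms survive, this yields
\begin{equation*}
a(z)\big|_{\ze=0}\,\Theta(b) = \sum_{n\in \ZZ} z^{-n-1}\, \Theta\bigl(a_{(n+\NN)}b\bigr).
\end{equation*}
Here I am implicitly using that $\Theta(b)\in V$ is a constant (independent of $\ze$), so substituting $\ze=0$ on the left is unambiguous.

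On the other hand, by the defining expansion \eqref{logf24} of $a_{(n+\N)}$, we have
\begin{equation*}
a(z)\big|_{\ze=0}\,\Theta(b) = \sum_{n\in \ZZ} z^{-n-1}\, a_{(n+\N)}\Theta(b).
\end{equation*}
Equating the two displays and comparing coefficients of $z^{-n-1}$ gives \eqref{Thanb}.

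There is no real obstacle here; the statement is essentially the $i=0$ component of \eqref{aThetab}. The only subtlety is noticing that the notation $a_{(n+\NN)}$ on the left side of \eqref{Thanb} refers to a product of logarithmic fields (producing a new field, whose $\Theta$ is then taken), while $a_{(n+\N)}$ on the right side is the mode extracted from the $\ze=0$ specialization of $a(z)$; the content of the corollary is precisely that these two a priori different constructions agree after evaluation on $\Theta(b)$.
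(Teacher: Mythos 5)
Your proof is correct and is exactly the argument the paper intends: the corollary is the $i=0$ part of \eqref{aThetab}, read off by setting $\ze=0$ and matching coefficients of $z^{-n-1}$ against the mode expansion \eqref{logf24} applied to the vector $\Theta(b)$. The paper gives no separate proof, treating this as immediate from \eqref{aThetab}, so your write-up fills in precisely the intended steps.
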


The following version of the Kac Existence Theorem (see \cite{K,DK}) will be useful to generate examples of logarithmic vertex algebras. 
In it we use the definition of the space $\overline{\V}$ from Lemma \ref{l2.12}. In addition, $\NN$ will be a braiding map on $\LF(V)$ satisfying 
\eqref{1.2}, \eqref{logfdn}, and \eqref{logf18b} for any $\NN$-local subspace $\V\subset\LF(V)$. For simplicity, we will assume that
$\NN=(\ad\otimes\ad)(\N)$ is constructed as in \coref{cor3.10-2}, where $\N$ is a braiding map on $V$ as in \eqref{logf-n}.
Then conditions \eqref{logf21} hold provided that $[T,\phi_i]=[T,\psi_i]=0$ for all $i$.
As we will see in the next section, for a logarithmic vertex algebra, the map $\NN$ always has this form.

\begin{theorem}[Existence Theorem]\label{l2.19} 
Let\/ $V$ be a superspace equipped with an even vector\/ $\vac \in V_{\bar 0}$ and an even linear operator\/ $T\in \End(V)_{\bar 0}$ such that\/ $T\vac=0$,
and\/ $\N$ be a braiding map on\/ $V$ as in\/ \eqref{logf-n} such that
\begin{equation}\label{Tphipsi}
[T,\phi_i]=[T,\psi_i]=0\,, \qquad i=1,\dots,L \,. 
\end{equation}
Set\/ $\NN=(\ad\otimes\ad)(\N)$ and extend
it to a braiding map on\/ $\LF(V)$ by acting only on the coefficients in front of powers of\/ $z^{\pm1}$ and\/ $\ze$ in the logarithmic fields. 
Fix an\/ $\NN$-local subspace
\begin{equation*}
\V=\Span\bigl\{I,a^{i}(z) \,\big|\, i\in J \bigr\} \subset \LFT(V)
\end{equation*}
of translation covariant logarithmic fields, where\/ $J$ is an index set.
Suppose that\/ $\V$ is \emph{complete} in the sense that
\begin{equation*}
V=\Span\bigl\{\vac, \, a^{i_{1}}_{(n_1+\N)}\cdots a^{i_{k}}_{(n_k+\N)}\vac \,\big|\, k\ge1, \, i_{1},\dots, i_{k}\in J, \, n_{1},\dots ,n_{k}\in\ZZ \bigr\} \,.
\end{equation*}
Then\/ $\overline{\V} \subset \LFT(V)$ is an\/ $\NN$-local space containing\/ $\V$, closed under all\/ $(n+\NN)$-th products,
and for every\/ $v \in V$ there exists a unique logarithmic field\/ $Y (v, z)\in \overline{\V}$ such that\/ $Y (v, z)\vac \big|_{z=0}=v$.
Furthermore, $\overline{\V}$ is the maximal\/ $\NN$-local subspace of\/ $\LFT(V)$ containing\/ $\V$.
\end{theorem}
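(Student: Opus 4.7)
The plan is to first verify the structural properties of $\overline{\V}$, then to analyze the evaluation map $\Theta$ from \eqref{logf22b} and identify $Y(\cdot,z)$ with its inverse. Start by applying Lemma \ref{l2.12} to $\V$: its hypothesis — that \eqref{logf18b} holds on every $\NN$-local subspace — is exactly the conclusion of Corollary \ref{cor3.10-2}, since $\NN=(\ad\otimes\ad)(\N)$ is built from a braiding map on $V$. This produces $\overline{\V}\subset\LF(V)$, $\NN$-local and closed under all $(n+\NN)$-products. To upgrade this to $\overline{\V}\subset\LFT(V)$, observe that \eqref{Tphipsi} implies $[\ad(T),\ad(\phi_i)]=[\ad(T),\ad(\psi_i)]=0$, which is precisely condition \eqref{logf21} for the extended $\NN$; Lemma \ref{l2.16} then shows that translation covariance is preserved under $(n+\NN)$-products, and the inductive construction of $\overline{\V}$ in Lemma \ref{l2.12} keeps us inside $\LFT(V)$.

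Next I would study $\Theta\colon\overline{\V}\to V$, $a\mapsto a(z)\vac|_{z=0}$. For surjectivity, $\Theta(I)=\vac$, and iterating Corollary \ref{l2.18b} inside the $(n+\NN)$-closed space $\overline{\V}$ yields
\begin{equation*}
\Theta\bigl((a^{i_1})_{(n_1+\NN)}\cdots (a^{i_k})_{(n_k+\NN)}\,I\bigr) = a^{i_1}_{(n_1+\N)}\cdots a^{i_k}_{(n_k+\N)}\vac \,,
\end{equation*}
so the completeness hypothesis forces $\Theta(\overline{\V})=V$. For injectivity, if $\Theta(a)=0$, the second statement of Lemma \ref{l2.18} gives $a(z)\Theta(b)=0$ for every $b\in\overline{\V}$; combined with surjectivity this means $a(z)$ annihilates all of $V$, so $a=0$. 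Hence $\Theta$ is a linear bijection, and setting $Y(v,z):=\Theta^{-1}(v)$ produces, for each $v\in V$, a unique logarithmic field in $\overline{\V}$ with $Y(v,z)\vac|_{z=0}=v$.

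For maximality, let $\V'\subset\LFT(V)$ be any $\NN$-local subspace containing $\V$, and form $\overline{\V'}$ via the same construction; by nesting of the defining sequence $\V\subset\V'$ one has $\overline{\V}\subset\overline{\V'}\subset\LFT(V)$, and $\overline{\V'}$ again satisfies the hypotheses of Lemma \ref{l2.18}. Given any $c\in\V'$, surjectivity of $\Theta$ on $\overline{\V}$ provides $b\in\overline{\V}$ with $\Theta(c)=\Theta(b)$, so $c-b\in\overline{\V'}$ and $\Theta(c-b)=0$. Applying Lemma \ref{l2.18} inside $\overline{\V'}$ and using $\Theta(\overline{\V'})\supset\Theta(\overline{\V})=V$, one concludes that $(c-b)(z)$ vanishes on all of $V$, hence $c=b\in\overline{\V}$; thus $\V'\subset\overline{\V}$.

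The main obstacle is the injectivity of $\Theta$: the second part of Lemma \ref{l2.18} only propagates the vanishing of $a(z)\vac$ to vanishing of $a(z)$ on the image $\Theta(\overline{\V})$, so the argument is only as strong as that image is large. The completeness hypothesis is precisely what forces $\Theta(\overline{\V})=V$, making both the uniqueness of $Y(v,z)$ and the maximality statement fall out of the same mechanism. A subsidiary, purely bookkeeping point is tracking translation covariance through the iterative construction of $\overline{\V}$, which is where the commutation assumption \eqref{Tphipsi} is used to validate \eqref{logf21} for the extended $\NN$.
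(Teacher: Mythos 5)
Your proposal is correct and follows essentially the same route as the paper: Lemma \ref{l2.12} (via Corollary \ref{cor3.10-2}) to build $\overline{\V}$, Lemma \ref{l2.16} with \eqref{Tphipsi} to stay in $\LFT(V)$, Corollary \ref{l2.18b} plus completeness for surjectivity of $\Theta$, and Lemma \ref{l2.18} for injectivity. Your maximality argument is just a slightly more explicit rephrasing of the paper's observation that $\Theta$ is bijective on both $\overline{\V}$ and $\overline{\U}$, forcing the two to coincide.
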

\begin{proof} 
As discussed before the theorem, $\NN$ is a braiding map on $\LF(V)$ satisfying \eqref{1.2}, \eqref{logfdn}, and \eqref{logf18b} for any $\NN$-local subspace $\V\subset\LF(V)$.
Hence, by \leref{l2.12}, we can construct the minimal local space $\overline{\V}$ containing $\V$ and closed under all $(n+\NN)$-th products, $n\in\ZZ$.
Moreover, \eqref{Tphipsi} implies $[\ad(T),\ad(\phi_i)]=[\ad(T),\ad(\psi_i)]=0$ for all $i$, so the map $\NN$ satisfies \eqref{logf21}.
Then \leref{l2.16} ensures that $\overline{\V} \subset \LFT(V)$.

The linear map $\Theta$, given by \eqref{logf22b}, satisfies $\Theta(I)=\vac$ and $\Theta(a^i)=a^i_{(-1+\N)}\vac$; see \eqref{logf24}.
More generally, by \eqref{Thanb}, we have
\begin{align*}
\Theta\bigl(a^{i_{1}}_{(n_1+\NN)} I\bigr) &= a^{i_{1}}_{(n_1+\N)} \vac \,, \\
\Theta\bigl(a^{i_{1}}_{(n_1+\NN)} (a^{i_{2}}_{(n_2+\NN)} I)\bigr) &= a^{i_{1}}_{(n_1+\N)} a^{i_{2}}_{(n_2+\N)} \vac \,, \quad\text{etc.}
\end{align*}  
By induction, we get
\begin{equation*}
\Theta\bigl(a^{i_{1}}_{(n_1+\NN)} (a^{i_{2}}_{(n_2+\NN)}(\cdots (a^{i_{k}}_{(n_k+\NN)} I)\cdots) \bigr)
= a^{i_{1}}_{(n_1+\N)}\cdots a^{i_{k}}_{(n_k+\N)}\vac \,.
\end{equation*}  
Therefore, the map $\Theta$ is surjective, due to the completeness of $\V$.

To prove that $\Theta$ is injective, suppose that $a\in \overline{\V}$ is such that $\Theta(a)=0$. Applying \leref{l2.18} for the space $\overline{\V}$, we get $a(z)\Theta(b)=0$ for all $b\in \overline{\V}$,
which implies $a(z)=0$ because $\Theta$ is surjective. Hence $\Theta$ is bijective, and we define $Y\colon V\to\overline{\V}$ as $\Theta^{-1}$.

Finally, to show that $\overline{\V}$ is maximal, consider any local subspace $\U\subset\LFT(V)$ such that $\V\subset\U$. Then applying the above results for $\U$ instead of $\V$, we obtain the subspace
$\overline{\U}$ and the bijection $\Theta\colon\overline{\U} \to V$. However, $\overline{\V} \subset \overline{\U}$ and the restriction of $\Theta$ to $\overline{\V}$ is also bijective. 
Hence, $\overline{\V} = \overline{\U}$, which implies $\U\subset\overline{\V}$.
\end{proof}

Notice that the space $\overline{\V}$ constructed in the above theorem is both minimal and maximal; thus is the unique space with these properties.
This uniqueness is related to Goddard's Uniqueness Theorem for vertex algebras (see \cite{K,DK}).
We will have another version of it for logarithmic vertex algebras in the next section.

\section{Logarithmic vertex algebras}\label{s3}
In this section, we introduce two equivalent definitions of the notion of a logarithmic vertex algebra.  We define conformal logarithmic vertex algebras and express the braiding map $\N$ in terms of the Virasoro $L_0$-operator. 
We derive an Existence Theorem, Uniqueness Theorem, $(n+\N)$-th product identity, OPEs, skew-symmetry, associativity, and a version of the Borcherds identity for logarithmic vertex algebras.

\subsection{$\NN$-logarithmic vertex algebras}\label{s3.1}
Our first definition is given in terms of a braiding map $\NN$ on the space $\LF(V)$ of logarithmic fields, and it encodes the properties of the space $\overline\V$ from \thref{l2.19}.

\begin{definition}\label{d3.1}
An $\NN$-\emph{logarithmic vertex algebra} (abbreviated $\NN$-logVA) is a vector superspace $V$ (space of states), equipped with an even vector $\vac\in V_{\bar 0}$ (vacuum vector), an even endomorphism $T\in \End(V)_{\bar 0}$ (translation operator),  an even linear map (state-field correspondence)
\[Y_z\colon  V\to \LF(V)\,, \qquad a\mapsto Y_z(a)=Y(a,z)\,, \]
and a braiding map $\NN$ on the space $Y_{z}(V)$ (see \deref{lm}),
which are subject to the following axioms:

\medskip
(\emph{vacuum})\; $Y(\vac,z)=I$, \; $Y(a,z)\vac\in V[\![z]\!]$, \; $Y(a,z)\vac\big|_{z=0} = a$, \; $T\vac=0$.

\medskip
(\emph{covariance})\; $[\ad(T)\otimes I, \NN]=[D_z\otimes I, \NN]=0$, \;  $[T,Y(a,z)] = D_z Y(a,z)$.

\medskip
(\emph{locality})\;  
The space $Y_{z}(V)$ is $\NN$-local (see \deref{d2.5}).

\medskip
(\emph{hexagon})\; For all $n\in\ZZ$, we have
\begin{equation} \label{hexagon}
\NN (\hat{\mu}_{(n)}\otimes I) = (\hat{\mu}_{(n)}\otimes I) (\NN_{13}+\NN_{23})  \quad \text{on} \quad  Y_{z}(V)^{\otimes 3}\,,
\end{equation}
where $\hat{\mu}_{(n)}$ is the $(n+\NN)$-th product of logarithmic fields defined by \eqref{logf16} and \eqref{logf17-mu}.

\smallskip
We denote an $\NN$-logVA by $(V,\vac,T, Y,\NN)$ or just $V$ for short.
\end{definition}

We will construct examples of logarithmic vertex algebras in \seref{s4} below. The main tool for that is the following version of the Kac Existence Theorem (see \cite{K,DK}), which 
allows us to generate an $\NN$-logVA from an $\NN$-local space of logarithmic fields.

\begin{theorem}[Existence Theorem]\label{t3.14} 
Under the assumptions of \thref{l2.19}, we have the\/ $\NN$-logVA\/ $(V,\vac,T,Y,\NN)$.
\end{theorem}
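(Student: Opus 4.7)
The plan is to invoke Theorem \ref{l2.19} to obtain the $\NN$-local subspace $\overline{\V} \subset \LFT(V)$ together with the bijection $\Theta \colon \overline{\V} \to V$ defined by $\Theta(a) = a(z)\vac|_{z=0}$, and then define the state-field correspondence as $Y := \Theta^{-1}$. With $Y_z(V) = \overline{\V}$, verifying that $(V, \vac, T, Y, \NN)$ is an $\NN$-logVA then reduces to checking each of the axioms in Definition \ref{d3.1}.

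For the vacuum axioms, $Y(\vac, z) = I$ since $\Theta(I) = I \cdot \vac|_{z=0} = \vac$ together with injectivity of $\Theta$; the properties $Y(a,z)\vac \in V[\![z]\!]$ and $Y(a,z)\vac|_{z=0} = a$ follow from $\overline{\V} \subset \LFT(V)$ via Lemma \ref{ltrcvac} and the definition of $\Theta$; and $T\vac = 0$ is part of the hypothesis. The covariance identities $[\ad(T)\otimes I, \NN] = 0$ and $[D_z \otimes I, \NN] = 0$ are consequences of the construction $\NN = (\ad \otimes \ad)(\N)$, combined with the assumption $[T,\phi_i] = [T,\psi_i] = 0$ (which gives $[\ad(T), \ad(\phi_i)] = [\ad(T), \ad(\psi_i)] = 0$) and the fact that $\NN$ acts only on the coefficients of logarithmic fields, while $[T, Y(a,z)] = D_z Y(a,z)$ is just the definition of translation covariance applied to elements of $\overline{\V} \subset \LFT(V)$.

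The locality axiom is one of the direct conclusions of Theorem \ref{l2.19}, and the hexagon identity on $Y_z(V)^{\otimes 3} = \overline{\V}^{\otimes 3}$ follows from Corollary \ref{cor3.10-2} (equivalently Lemma \ref{loccomp}), which guarantees that \eqref{logf18b} holds on every $\NN$-local subspace, in particular on $\overline{\V}$. The substantive work—constructing $\overline{\V}$ as an $\NN$-local subspace closed under all $(n+\NN)$-products, proving bijectivity of $\Theta$ (injectivity via Lemma \ref{l2.18}, surjectivity via the completeness assumption on $\V$), and showing $\overline{\V} \subset \LFT(V)$—has already been carried out in Lemmas \ref{lem2.6}--\ref{l2.18} and Theorem \ref{l2.19}. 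Consequently, there is no serious obstacle at this stage: the proof is essentially a bookkeeping exercise confirming that the output of Theorem \ref{l2.19} satisfies each clause of Definition \ref{d3.1}, with the one item requiring the most care being the verification that the axioms phrased on $Y_z(V)$ (rather than on all of $\LF(V)$) are exactly the restrictions of the properties already established for $\overline{\V}$.
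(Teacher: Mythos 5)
Your proposal is correct and follows exactly the route the paper takes: the paper's proof of Theorem \ref{t3.14} is the single line ``Follows immediately from \thref{l2.19},'' and your argument simply spells out the bookkeeping (defining $Y=\Theta^{-1}$ and checking each clause of Definition \ref{d3.1} against the conclusions of Theorem \ref{l2.19}, Lemma \ref{ltrcvac}, and Corollary \ref{cor3.10-2}) that the paper leaves implicit. No issues.
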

\begin{proof}
Follows immediately from \thref{l2.19}.
\end{proof}

Let $V$ be an $\NN$-logVA, and denote by $\V$ the local space $Y_{z}(V)$. Then, by \thref{l2.19}, we have $\V=\overline\V$.
This implies that $\V$ is closed under all $(n+\NN)$-th products ($n\in\ZZ$), and in particular under $D_z$ because $I\in\V$ (see \eqref{2.30}).
Hence we have the following:

\begin{remark}\label{rem3.10} 
Let $V$ be an $\NN$-logVA. Then, for every fixed $n\in\ZZ$, we have the structure $(Y_{z}(V), I, D_{z}, \hat{\mu}_{(n)},\NN)$, which is compatible in the sense of \deref{def1}.
\end{remark}

Moreover, again by \thref{l2.19}, the space $\V=Y_{z}(V)$ is maximal. As a consequence, we obtain the following version of
Goddard's Uniqueness Theorem (cf.\ \cite{K}).

\begin{theorem}[Uniqueness Theorem]\label{t3.7} 
Let\/ $V$ be an\/ $\NN$-logVA. If\/ $\U\subset\LFT(V)$ is an\/ $\NN$-local space of translation covariant logarithmic fields containing\/ $Y_{z}(V)$,
then\/ $\U=Y_{z}(V)$. In particular, for any\/ $a(z)\in\U$, we have\/ $a(z)=Y(v,z)$ where $v=a(z)\vac|_{z=0}$.
\end{theorem}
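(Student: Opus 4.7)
The plan is to reduce the claim to the injectivity of the evaluation map $\Theta\colon\LFT(V)\to V$, $\Theta(a)=a(z)\vac|_{z=0}$, restricted to $\U$. Since the covariance axioms of Definition~\ref{d3.1} say precisely that $[\ad(T)\otimes I,\NN]=[D_z\otimes I,\NN]=0$, the hypotheses \eqref{logfdn} and \eqref{logf21} of Lemma~\ref{l2.18} are satisfied. So Lemma~\ref{l2.18} applies to the local subspace $\U\subset\LFT(V)$: if $a\in\U$ with $\Theta(a)=0$, then $a(z)\Theta(b)=0$ for every $b\in\U$.

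Next I would use the vacuum axiom to observe that $\Theta|_{Y_z(V)}$ is already surjective onto $V$, since $\Theta(Y(v,z))=Y(v,z)\vac|_{z=0}=v$. Because $Y_z(V)\subset\U$, the set $\Theta(\U)$ contains all of $V$. Combined with the previous step, any $a\in\U$ with $\Theta(a)=0$ satisfies $a(z)v=0$ for all $v\in V$, hence $a(z)=0$. Therefore $\Theta|_\U$ is injective.

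Finally, for an arbitrary $a(z)\in\U$, set $v:=\Theta(a)=a(z)\vac|_{z=0}\in V$. Then $Y(v,z)\in Y_z(V)\subset\U$ and $\Theta(Y(v,z))=v=\Theta(a)$; injectivity of $\Theta|_\U$ yields $a(z)=Y(v,z)\in Y_z(V)$. Thus $\U\subset Y_z(V)$, which together with the hypothesis $Y_z(V)\subset\U$ gives $\U=Y_z(V)$, and identifies the unique preimage of $v$ as $Y(v,z)$.

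There is no real obstacle here, as the bulk of the work has already been done in Theorem~\ref{l2.19}, where the space $\overline{\V}$ was characterized as both the minimal and maximal $\NN$-local subspace of $\LFT(V)$ containing $\V$. The only thing to verify carefully is that Lemma~\ref{l2.18} is being invoked legitimately for $\U$ (not just for $Y_z(V)$), which is fine since that lemma only requires locality and translation covariance, not the hexagon axiom.
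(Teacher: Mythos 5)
Your proof is correct and follows essentially the same route as the paper: the paper deduces the theorem from the maximality clause of Theorem~\ref{l2.19}, whose proof rests on exactly the two ingredients you use, namely Lemma~\ref{l2.18} (injectivity of $\Theta$ on a local subspace of $\LFT(V)$) and the surjectivity of $\Theta$ on $Y_z(V)$ coming from the vacuum axiom. Your version is marginally more direct in that you apply Lemma~\ref{l2.18} to $\U$ itself rather than first forming $\overline{\U}$, and your closing remark is accurate --- that lemma needs only locality and translation covariance, so no hexagon-type hypothesis on $\U$ is required.
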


As in \cite{B}, we will use the notation (cf.\ \eqref{logf24}):
\begin{equation}\label{logva1} 
X(a,z):=Y(a,z)\big|_{\ze=0} = \sum_{n\in \ZZ} a_{(n+\N)} \, z^{-n-1} \,, \qquad a\in V\,.
\end{equation}
The linear operators $a_{(n+\N)} \in\End(V)$ are called the \emph{modes} of $a$.
For example, the vacuum and covariance axioms imply that
\begin{equation}\label{logva-12} 
a_{(-1+\N)} \vac = a \,, \qquad  a_{(-2+\N)} \vac = Ta \,,
\end{equation}
just as for ordinary vertex algebras. Notice that, by definition, for every fixed $a,b\in V$ we have
\begin{equation}\label{modes-0} 
a_{(n+\N)} b = 0 \,, \qquad  n\gg0 \,.
\end{equation}
We have the following analog of the $n$-th product identity (cf.\ \cite{K}).

\begin{proposition}[$(n+\N)$-th product identity]\label{pro3.12} 
For any\/ $\NN$-logVA\/ $V$ and\/ $a, b\in V$, we have  
\begin{equation}\label{nprod-1}
Y(a,z)_{(n+\NN)}Y(b,z)=Y(a_{(n+\N)}b,z)\,,
\qquad 
n\in\ZZ \,.
\end{equation}
In particular,
$Y(Ta,\z)=D_{z}Y(a,z)$.
\end{proposition}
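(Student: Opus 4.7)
My plan is to apply Goddard's Uniqueness Theorem \thref{t3.7} to the field $Y(a,z)_{(n+\NN)}Y(b,z)$: it will suffice to show (i) that this field lies in the translation-covariant local space $Y_z(V)$, and (ii) that its image under $\Theta$ (its value on $\vac$ at $z=0$) equals $a_{(n+\N)}b$. Both points are immediate from material already established in Section 2.

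For (i), I would observe that $Y_z(V)$ is $\NN$-local by the locality axiom of an $\NN$-logVA, and by \reref{rem3.10} it is closed under all $(n+\NN)$-th products. Its translation covariance is furnished by \leref{l2.16}, whose hypotheses are secured by the covariance axiom together with the fact that $Y_z(V)\subset\LFT(V)$.

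For (ii), I would apply \coref{l2.18b} to the local space $Y_z(V)$ with the fields $Y(a,z)$ and $Y(b,z)$, giving
\begin{equation*}
\Theta\bigl(Y(a,z)_{(n+\NN)}Y(b,z)\bigr) = Y(a,z)_{(n+\N)}\,\Theta\bigl(Y(b,z)\bigr).
\end{equation*}
The vacuum axiom supplies $\Theta(Y(b,z))=Y(b,z)\vac\big|_{z=0}=b$, while the definition \eqref{logva1} identifies the operator $Y(a,z)_{(n+\N)}$ (the coefficient of $z^{-n-1}$ in the $\ze=0$ specialisation of $Y(a,z)$) with the mode $a_{(n+\N)}\in\End(V)$. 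Hence $\Theta\bigl(Y(a,z)_{(n+\NN)}Y(b,z)\bigr)=a_{(n+\N)}b$, and \thref{t3.7} yields \eqref{nprod-1}.

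The supplementary assertion $Y(Ta,z)=D_z Y(a,z)$ will then be a special case: I set $n=-2$ and $b=\vac$ in \eqref{nprod-1}, and use $Y(\vac,z)=I$ together with \eqref{2.30} applied to the field $Y(a,z)$; the left-hand side becomes $D_z Y(a,z)$, whereas on the right $a_{(-2+\N)}\vac=Ta$ by \eqref{logva-12}. I expect no substantive obstacle in this argument; the only mildly delicate point is the identification $Y(a,z)_{(n+\N)}=a_{(n+\N)}$, but this is tautological from the notation \eqref{logva1}.
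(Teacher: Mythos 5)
Your proposal is correct and follows essentially the same route as the paper: both arguments rest on the closure of $Y_z(V)$ under the $(n+\NN)$-th products (so the product field is $Y(v,z)$ for some $v$) and on \coref{l2.18b} to identify $v=\Theta\bigl(Y(a,z)_{(n+\NN)}Y(b,z)\bigr)=a_{(n+\N)}b$. The explicit appeal to the Uniqueness Theorem is not really needed — the bijectivity of $\Theta$ on $Y_z(V)$ already suffices — but this is only a cosmetic difference, and your derivation of $Y(Ta,z)=D_zY(a,z)$ from \eqref{2.30} and \eqref{logva-12} is the intended one.
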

\begin{proof}
As discussed before \reref{rem3.10}, the space $Y_z(V)$ is closed under all $(n+\NN)$-th products.
Hence, for fixed $a,b\in V$ and $n\in\ZZ$, we have $Y(a,z)_{(n+\NN)}Y(b,z)=Y(v,z)$ for some $v\in V$.
Recall that, by definition, 
\begin{equation*}
\Theta\bigl(Y(v,z)\bigr) = Y(v,z)\vac\big|_{z=0}=v \,.
\end{equation*}
But from \coref{l2.18b},
\begin{equation}\label{nprod-2}
\bigl(Y(a,z)_{(n+\NN)}Y(b,z)\bigr)\vac\big|_{z=0}=a_{(n+\N)}b \,;
\end{equation}
thus, $v=a_{(n+\N)}b$.
\end{proof}

More generally, using \eqref{aThetab}, the same proof as above gives the full expansion of a logarithmic field 
in an $\NN$-logVA:
\begin{equation}\label{logva1b}
 Y(a,z)b = \sum_{i\in \ZZ_{+}, \, n\in \ZZ}\frac{(-1)^{i}}{i!}\ze^{i} z^{-n-1} \Bigl( \NN^{i} Y(a,z_1)_{(n+\NN)} Y(b,z_1) \Bigr)\vac\Big|_{z_1=0} \,,
\end{equation}
for all $a,b \in V$, where we use the notation \eqref{logf17}.
In order to understand this formula better, let us rewrite the coefficients as
\begin{equation}\label{logva1c}
\begin{split}
\Bigl( \NN^{i} Y(a,z)_{(n+\NN)} Y(b,z) \Bigr)\vac\Big|_{z=0}
&= \bigl( \Theta \, \hat{\mu}_{(n)} \, \NN^{i} \bigr) \bigl( Y(a,z) \otimes Y(b,z) \bigr) \\
&= \bigl( \Theta \, \hat{\mu}_{(n)} \, \NN^{i} (Y_z \otimes Y_z)\bigr) (a\otimes b) \,.
\end{split}
\end{equation}
where, as before, we use the notation \eqref{logf17-mu} and \eqref{logf22b}.
Similarly to $\hat{\mu}_{(n)}$, we introduce the linear maps
\begin{equation}\label{eq3.7a}
\mu_{(n)} \colon V\otimes V\rightarrow V\,, \qquad {\mu}_{(n)}(a\otimes b)= a_{(n+\N)}b\, .
\end{equation}
Then \eqref{nprod-1} and \eqref{nprod-2} can be reformulated as the following identities on $V\otimes V$:
\begin{align}
\label{nprod-3}
\hat{\mu}_{(n)} \, (Y_z \otimes Y_z) &= Y_z \, \mu_{(n)} \,,\\
\label{nprod-4}
\Theta \, \hat{\mu}_{(n)} \, (Y_z \otimes Y_z) &= \mu_{(n)} \,.
\end{align}
These two identities are equivalent because the maps $\Theta$ and $Y_z$ are inverse to each other.
Using the isomorphisms $\Theta$ and $Y_z$, we will
transfer the action of $\NN$ on $Y_z(V) \otimes Y_z(V)$ to an action on $V\otimes V$.

\begin{definition}\label{l3.8b} 
For any $\NN$-logVA $V$, we have an even linear operator $\N$ on $V \otimes V$ defined by
\begin{equation}\label{eq3.11}
\N (a\otimes b)= \NN \bigl( Y(a,\z_{1})\otimes Y(b,\z_{2}) \bigr)(\vac\otimes \vac)\big|_{z_{1}=z_{2}=0}\, .
\end{equation}
\end{definition}

As in \eqref{nprod-3}, \eqref{nprod-4}, equation \eqref{eq3.11} is equivalent to the identity
\begin{equation}\label{nprod-5}
\N = (\Theta\otimes\Theta) \, \NN \, (Y_{z_1} \otimes Y_{z_2}) \,,
\end{equation}
and by applying $Y_{z_1} \otimes Y_{z_2}$ to it we get
\begin{equation}\label{nprod-6}
(Y_{z_1} \otimes Y_{z_2}) \, \N = \NN \, (Y_{z_1} \otimes Y_{z_2}) \,.
\end{equation}
We will study the properties of $\N$ in the next subsection. In particular, it is clear that $\N$ is locally nilpotent.
Similarly to \eqref{logf17}, let us introduce the notation
\begin{equation}\label{eq3.7}
\N^{i}a_{(n+\N)}b := \mu_{(n)} \bigl(\N^{i} (a\otimes b)\bigr) \,, \qquad a,b\in V \, .
\end{equation}
As an application of the above identities, we derive the explicit expansion of logarithmic fields in an $\NN$-logVA,
which explains our choice of notation for the modes (cf.\ \cite{B}).

\begin{proposition}\label{p-modes}
In any\/ $\NN$-logVA\/ $V$, we have for\/ $a,b\in V$,
\begin{equation}\label{nprod-7}
\begin{split}
Y(a,z)b &= \sum_{i\in \ZZ_{+}, \, n\in \ZZ}\frac{(-1)^{i}}{i!}\ze^{i} z^{-n-1} \N^{i} a_{(n+\N)}b \\
&= \sum_{n\in \ZZ} z^{-n-1-\N} a_{(n+\N)}b \,, 
\end{split}
\end{equation} 
where, as before, $z^{-\N}:=e^{-\ze\N}$.
\end{proposition}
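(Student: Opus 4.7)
The plan is to derive \eqref{nprod-7} directly from the general expansion formula \eqref{logva1b} by feeding it through the three intertwining identities that were arranged just before the proposition, namely \eqref{nprod-3}, \eqref{nprod-4}, and \eqref{nprod-6}. The proposition is essentially the statement that, under the isomorphism $\Theta \colon Y_z(V) \to V$, all of the field-theoretic data on the right-hand side of \eqref{logva1b} can be pushed down to data on $V \otimes V$ itself.

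First I would start from \eqref{logva1b} and rewrite its coefficient using \eqref{logva1c} in the form
\[
\Bigl(\NN^i Y(a,z_1)_{(n+\NN)} Y(b,z_1)\Bigr)\vac\Big|_{z_1=0}
= \bigl(\Theta \, \hat{\mu}_{(n)} \, \NN^i \, (Y_z \otimes Y_z)\bigr)(a \otimes b).
\]
Next I would use the intertwining relation \eqref{nprod-6} to move $\NN^i$ across $(Y_z \otimes Y_z)$. Since $\N$ is locally nilpotent on $V\otimes V$ and each $\N(a\otimes b)$ is a finite sum of simple tensors, iterating \eqref{nprod-6} termwise yields
\[
\NN^i (Y_z \otimes Y_z)(a\otimes b) = (Y_z \otimes Y_z) \N^i (a\otimes b).
\]
Then I would apply \eqref{nprod-3} to collapse $\hat{\mu}_{(n)}(Y_z \otimes Y_z)$ into $Y_z \mu_{(n)}$, giving
\[
\hat{\mu}_{(n)} \, \NN^i (Y_z \otimes Y_z)(a\otimes b) = Y_z\bigl(\mu_{(n)}\N^i(a\otimes b)\bigr) = Y_z\bigl(\N^i a_{(n+\N)}b\bigr),
\]
where I use the notation \eqref{eq3.7}. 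Finally, composing with $\Theta$ and using the vacuum axiom in the form $\Theta \circ Y_z = \id_V$, the coefficient in \eqref{logva1b} becomes $\N^i a_{(n+\N)}b$, which establishes the first equality in \eqref{nprod-7}.

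For the second equality I would simply observe that the shorthand $z^{-n-1-\N} := z^{-n-1}\, e^{-\ze\N}$, when expanded as $\sum_{i\ge 0} \frac{(-\ze)^i}{i!}\N^i$ and applied to $a_{(n+\N)}b$, literally reproduces the double sum in the first line of \eqref{nprod-7}. This expansion is well-defined because $\N^i(a\otimes b) = 0$ for $i$ large by local nilpotency, so only finitely many $\ze^i$ appear, consistent with $Y(a,z)b \in V(\!(z)\!)[\ze]$.

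The proof really contains no genuine obstacle: every ingredient has been staged in the paragraph preceding the statement. The only point worth being careful about is the termwise iteration of \eqref{nprod-6}, which relies on $\N$ being locally nilpotent on $V\otimes V$ so that $(Y_z\otimes Y_z)$ may be applied to the finite expansions of $\N^i(a\otimes b)$ without any well-definedness issues.
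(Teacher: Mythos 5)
Your proposal is correct and follows essentially the same route as the paper: both start from \eqref{logva1b} with coefficients \eqref{logva1c}, move $\NN^i$ across $(Y_z\otimes Y_z)$ via \eqref{nprod-6}, and then collapse $\Theta\,\hat{\mu}_{(n)}\,(Y_z\otimes Y_z)$ to $\mu_{(n)}$ (the paper cites \eqref{nprod-4} directly where you use \eqref{nprod-3} plus $\Theta\circ Y_z=\id$, but these are explicitly equivalent). No gaps.
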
 
\begin{proof} 
We apply \eqref{logva1b}, where the coefficients are given by \eqref{logva1c}:
\begin{equation*}
\bigl( \Theta \, \hat{\mu}_{(n)} \, \NN^{i} \, (Y_z \otimes Y_z)\bigr) (a\otimes b) \,.
\end{equation*}
Then using \eqref{nprod-6} and \eqref{nprod-4}, we find
\begin{equation*}
\Theta \, \hat{\mu}_{(n)} \, \NN^{i} \, (Y_z \otimes Y_z)
= \Theta \, \hat{\mu}_{(n)} \, (Y_z \otimes Y_z) \, \N^{i}
= \mu_{(n)} \, \N^{i} \,,
\end{equation*}
thus completing the proof.
\end{proof}

From Propositions \ref{pro3.12} and \ref{p-modes}, we derive an expression for translation covariance in terms of modes:
\begin{equation}\label{Tanb}
(Ta)_{(n+\N)}b=-(n+\N)a_{(n-1+\N)}b \,, \qquad a,b\in V\,.
\end{equation} 

In the future, it will also be convenient to use the linear maps $Y(z) \colon V\otimes V \to V(\!(z)\!)[\ze]$ 
and $X(z) \colon V\otimes V \to V(\!(z)\!)$ defined by
\begin{equation}\label{logva0} 
Y(z)(a\otimes b):=Y(a,z)b\, ,\qquad X(z)(a\otimes b):=X(a,z)b = Y(a,z)b \big|_{\ze=0} \, .
\end{equation}
Then equation \eqref{nprod-7} can be restated as follows (cf.\ \cite{B}):
\begin{equation}\label{logva3}
Y(z) = X(z) \, e^{-\ze \N} \,, \qquad X(z) = Y(z) \, e^{\ze \N} \,.
\end{equation} 

From \eqref{logf16}, \eqref{nprod-6} and \prref{pro3.12}, we obtain another version of the $(n+\N)$-th product identity.

\begin{corollary}\label{cor-n-prod}
For any\/ $\NN$-logVA\/ $V$ and\/ $a,b,c\in V$, $n\in\ZZ$, we have
\begin{equation}\label{eq-n-prod}
\begin{split}
Y(&a_{(n+\N)} b,z_2) c \\
&= D_{\z_1}^{(N-1-n)} \Bigl(\z_{12}^N Y(z_1) (I \otimes  Y(z_2))
e^{\vartheta_{12}\N_{12}} (a \otimes b \otimes c) \Bigr)\Big|_{\z_1=\z_2} \,,
\end{split}
\end{equation}
for a sufficiently large\/ $N$ depending only on\/ $a$ and\/ $b$.
\end{corollary}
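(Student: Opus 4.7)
The plan is to combine the $(n+\N)$-th product identity from Proposition \ref{pro3.12} with the defining formula \eqref{logf16} for the $(n+\NN)$-th product of logarithmic fields, and then convert the action of $\NN$ on pairs of fields into an action of $\N$ on pairs of states via the intertwining identity \eqref{nprod-6}.

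First, applying Proposition \ref{pro3.12} one rewrites the left-hand side as
\[
Y(a_{(n+\N)}b, z_2)c = \bigl( Y(a,z_2)_{(n+\NN)} Y(b,z_2) \bigr) c.
\]
Choose $N\in\ZZ_+$ large enough that the locality condition \eqref{logf15} holds for all iterates $\NN^i (Y(a,z) \otimes Y(b,z))$; this is possible because $\N$ (and hence $\NN$) is locally nilpotent on $a\otimes b$. Unfolding \eqref{logf16} then gives
\[
Y(a_{(n+\N)}b, z_2)c = D_{z_1}^{(N-1-n)} \mu\bigl( z_{12}^N e^{\vartheta_{12}\NN} Y(a,z_1) \otimes Y(b,z_2) \bigr) c \,\Big|_{z_1=z_2}.
\]

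Next, apply the intertwining identity \eqref{nprod-6}, which reads $\NN (Y_{z_1} \otimes Y_{z_2}) = (Y_{z_1} \otimes Y_{z_2}) \N$. Iterating and exponentiating,
\[
e^{\vartheta_{12}\NN} \bigl( Y(a,z_1) \otimes Y(b,z_2) \bigr) = (Y_{z_1} \otimes Y_{z_2}) \bigl( e^{\vartheta_{12}\N}(a \otimes b) \bigr),
\]
which is a finite sum of simple tensors of fields by local nilpotency of $\N$. Composing with $\mu$ and evaluating at $c$ produces the composition $Y(z_1)(I \otimes Y(z_2))$ applied to $e^{\vartheta_{12}\N_{12}}(a \otimes b \otimes c)$, where $\N_{12}$ acts on the first two tensor factors and leaves $c$ untouched. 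Substituting this back into the previous display yields \eqref{eq-n-prod}.

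The only real (and still minor) obstacle is keeping the notation straight: one must verify that $\NN$ in the intermediate expression, which acts on pairs of fields, gets correctly translated into $\N_{12}$ acting on the first two of three tensor slots, and that the exponential series truncate to finite sums so that composition with $\mu$ and evaluation on $c$ commutes with the $\N$-action. Once this bookkeeping is set up, the identity is an immediate consequence of Proposition \ref{pro3.12}, the definition \eqref{logf16}, and the compatibility \eqref{nprod-6}.
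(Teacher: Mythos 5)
Your proof is correct and follows exactly the route the paper takes: the paper derives this corollary precisely "from \eqref{logf16}, \eqref{nprod-6} and Proposition \ref{pro3.12}," which are the three ingredients you combine, in the same order. The bookkeeping you flag (translating $\NN$ on pairs of fields into $\N_{12}$ on the first two of three tensor slots, with all sums finite by local nilpotency) is handled correctly.
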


We finish this subsection with another consequence of \prref{pro3.12}, which is also its generalization.

\begin{corollary}\label{c3.13} 
For any\/ $\NN$-logVA\/ $V$, elements\/ $a, b\in V$ and\/ $n\in\ZZ$, $i\in\ZZ_{+}$, we have  
\begin{equation*}
\NN^{i}Y(a,z)_{(n+\NN)}Y(b,z)=Y(\N^{i}a_{(n+\N)}b,z)\, .
\end{equation*}
\end{corollary}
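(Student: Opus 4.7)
The plan is to prove the identity by combining the $(n+\N)$-th product identity with the intertwining relation \eqref{nprod-6} between $\N$ and $\NN$ via the state-field correspondence.

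First I would unpack the notation on both sides. By \eqref{logf17} and \eqref{logf17-mu}, the left-hand side is
\[
\NN^{i}Y(a,z)_{(n+\NN)}Y(b,z) = \hat{\mu}_{(n)}\bigl(\NN^{i}(Y(a,z)\otimes Y(b,z))\bigr) = \hat{\mu}_{(n)}\bigl(\NN^{i}(Y_z\otimes Y_z)(a\otimes b)\bigr),
\]
while by \eqref{eq3.7} and the definition of $Y_z$, the right-hand side is $Y_z(\mu_{(n)}\N^{i}(a\otimes b))$.

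Next I would use \eqref{nprod-6}, namely $(Y_{z_1}\otimes Y_{z_2})\,\N = \NN\,(Y_{z_1}\otimes Y_{z_2})$, iteratively. By a trivial induction on $i$ this gives $\NN^{i}(Y_{z_1}\otimes Y_{z_2}) = (Y_{z_1}\otimes Y_{z_2})\N^{i}$, and this identity remains valid upon setting $z_1=z_2=z$ since both sides are $V$-valued formal power series in the variables, i.e.\ $\NN^{i}(Y_z\otimes Y_z) = (Y_z\otimes Y_z)\N^{i}$. Substituting this into the expression for the left-hand side yields
\[
\hat{\mu}_{(n)}\bigl((Y_z\otimes Y_z)\N^{i}(a\otimes b)\bigr).
\]

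Finally I would apply \eqref{nprod-3}, which states $\hat{\mu}_{(n)}(Y_z\otimes Y_z) = Y_z\,\mu_{(n)}$ on $V\otimes V$. This collapses the display to $Y_z(\mu_{(n)}\N^{i}(a\otimes b)) = Y(\N^{i}a_{(n+\N)}b,z)$, which matches the right-hand side.

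There is no real obstacle here: the corollary is essentially a formal consequence of the already-established intertwining properties of $Y_z$ with respect to $\N$, $\NN$, $\mu_{(n)}$, and $\hat{\mu}_{(n)}$. The only mild subtlety is justifying the specialization $z_1=z_2=z$ in the intertwining relation, which is routine because $\NN$ acts through the endomorphisms $\Phi_i,\Psi_i$ componentwise on the coefficients of logarithmic fields (cf.\ \leref{loccomp}) and commutes with multiplication by powers of $z$ and $\ze$.
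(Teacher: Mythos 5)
Your proof is correct and follows essentially the same route as the paper: the paper's one-line proof is precisely to apply both sides of \eqref{nprod-3} to $\N^i(a\otimes b)$ and invoke \eqref{nprod-6}, which is what you do (just unpacked in more detail and with the harmless specialization $z_1=z_2=z$ made explicit).
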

\begin{proof} 
Follows by applying both sides of \eqref{nprod-3} to $\N^i(a\otimes b)$ and using \eqref{nprod-6}.
\end{proof}

\subsection{Properties of the operator $\N$}\label{s3.2}
Let again $V$ be an $\NN$-logVA, and define the linear operator $\N$ on $V\otimes V$ by \eqref{eq3.11}.
In this subsection, we study the properties of $\N$ and list them in \prref{l3.8cc} below.
But first, we need the following definition.

\begin{definition}\label{d3.4} 
A \emph{derivation} of an $\NN$-logVA $V$ is a linear operator $D\in \End(V)$ such that
\begin{equation}\label{der1}
D \bigl( Y(a,z)b \bigr) =Y(D a,z)b+(-1)^{p(a)p(D)}Y(a,z)D b\,, \qquad a,b\in V\, .
\end{equation}
We denote by $\Der(V)$ the set of all derivations of $V$.
\end{definition}

As usual, $\Der(V)$ is a Lie superalgebra under the supercommutator bracket. Condition \eqref{der1} is equivalent to $D$ being a derivation of the
$(n+\N)$-th product $\mu_{(n)}$ for every $n\in\ZZ$, and can be restated as
\begin{equation}\label{der2}
D \, \mu_{(n)} = \mu_{(n)} \, (D \otimes I + I \otimes D) \,.
\end{equation} 
Equivalently, using notation \eqref{logva0}, we have on $V\otimes V$:
\begin{equation}\label{der3}
D \, Y(z) = Y(z) \, (D \otimes I + I \otimes D) \,.
\end{equation} 
It is easy to derive from \eqref{logva-12} that every derivation $D\in\Der(V)$ satisfies
\begin{equation}\label{der4}
D\vac=0 \,, \qquad [D,T]=0 \,.
\end{equation} 
Notice that $T\in\Der(V)$, by the covariance axiom and \prref{pro3.12}.

\begin{proposition}\label{l3.8cc} 
Let\/ $V$ be an\/ $\NN$-logVA. Then the linear operator\/ $\N$ defined by\/ \eqref{eq3.11} has the following properties$:$
\begin{enumerate}
\medskip
\item\label{3.9-1}
$\N$ is a braiding map on\/ $V$. 

\medskip
\item\label{3.9-2}
$\N$ is locally nilpotent on\/ $V\otimes V$. 

\medskip
\item\label{3.9-3}
$(V, \vac, T, \mu_{(n)}, \N)$ is compatible for every\/ $n\in\ZZ$.

\medskip
\item\label{3.9-4}
$\N (Y(z)\otimes I) = (Y(z)\otimes I) (\N_{13}+\N_{23})\,  $.

\medskip
\item\label{3.9-5}
$\N\in \Der(V)\otimes \Der(V)\,$.

\medskip
\item\label{3.9-6}
$\NN=(\ad\otimes\ad)(\N)$\; on \;$Y_z(V) \otimes Y_z(V)$.
\end{enumerate}
\end{proposition}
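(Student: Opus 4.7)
The main tool is the intertwining identity $(Y_{z_1}\otimes Y_{z_2})\,\N = \NN\,(Y_{z_1}\otimes Y_{z_2})$ from \eqref{nprod-6}, which reformulates \eqref{eq3.11} using that $\Theta$ and $Y_z$ are mutually inverse bijections (\thref{l2.19}). Since $Y_z$ is even, $P$ commutes with $Y_z\otimes Y_z$, and $Y_z^{\otimes 3}$ similarly intertwines each $\N_{ij}$ with $\NN_{ij}$. Conjugating the braiding-map axioms and local nilpotency of $\NN$ on $Y_z(V)$ through this bijection yields (1) and (2) at once.

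For (3), I verify the three compatibility conditions of \deref{def1} separately. The vanishing $\N(a\otimes\vac)=0$ uses $Y(\vac,z)=I$ together with $\NN(X\otimes I)=0$ on $Y_z(V)$, the latter being part of the compatibility of $(Y_z(V),I,D_z,\hat\mu_{(n)},\NN)$ asserted in \reref{rem3.10}. For $[T\otimes I,\N]=0$, I use $(Y_{z_1}\otimes Y_{z_2})(T\otimes I)=(D_{z_1}\otimes I)(Y_{z_1}\otimes Y_{z_2})$, which follows from $Y(Ta,z)=D_zY(a,z)$ (\prref{pro3.12}), to transfer the covariance relation $[D_{z_1}\otimes I,\NN]=0$ to $V\otimes V$. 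The hexagon $\N(\mu_{(n)}\otimes I)=(\mu_{(n)}\otimes I)(\N_{13}+\N_{23})$ pulls back from the hexagon axiom for $\hat\mu_{(n)}$ on $Y_z(V)^{\otimes 3}$ via the relation $\hat\mu_{(n)}(Y_z\otimes Y_z)=Y_z\mu_{(n)}$ from \eqref{nprod-3}. Property (4) then follows from (3) and (1): expanding $Y(z)\otimes I=\sum_n(\mu_{(n)}\otimes I)\,z^{-n-1-\N_{12}}$, applying the hexagon from (3) term by term, and commuting $e^{-\ze\N_{12}}$ past $\N_{13}+\N_{23}$ using $[\N_{12},\N_{13}]=[\N_{12},\N_{23}]=0$.

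For (5), I invoke the equivalence noted after \deref{def1}: the symmetry of $\N$ together with the hexagon in (3) for every $n\in\ZZ$ is equivalent to $\N\in\Der(V)\otimes\Der(V)$, with the derivation property of each component $\phi_i,\psi_i$ extracted from a presentation $\N=\sum_i\phi_i\otimes\psi_i$ in which the $\phi_i$ are linearly independent and the $\psi_i$ are linearly independent. Then (6) follows from (5): by \deref{d3.4}, every $D\in\Der(V)$ satisfies $[D,Y(a,z)]=Y(Da,z)$, so
\[
(\ad\otimes\ad)(\phi_i\otimes\psi_i)\bigl(Y(a,z_1)\otimes Y(b,z_2)\bigr) = (Y_{z_1}\otimes Y_{z_2})(\phi_i\otimes\psi_i)(a\otimes b);
\]
summing over $i$ and invoking the intertwining identity \eqref{nprod-6} gives $(\ad\otimes\ad)(\N)=\NN$ on $Y_z(V)\otimes Y_z(V)$. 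The main technical subtlety is the first condition in (3), which depends on \reref{rem3.10} for the vanishing $\NN(X\otimes I)=0$ not listed as a standalone axiom of \deref{d3.1}; correspondingly, the six properties must be proved in the stated order, with (1) and (3) preceding (5), and (5) preceding (6).
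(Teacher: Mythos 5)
Your proposal is correct and follows essentially the same route as the paper's proof: all properties are transferred between $V\otimes V$ and $Y_z(V)\otimes Y_z(V)$ through the mutually inverse maps $\Theta$ and $Y_z$ (i.e.\ \eqref{nprod-5}, \eqref{nprod-6}), the hexagon on $V$ is pulled back from \eqref{hexagon} via \eqref{nprod-3}, the components of $\N$ are extracted using linear independence, and property (6) comes from the derivation property of these components. The only differences are cosmetic: you check the compatibility conditions $\N(a\otimes\vac)=0$ and $[T\otimes I,\N]=0$ directly (via Remark \ref{rem3.10} and the covariance axiom combined with $Y(Ta,z)=D_zY(a,z)$), whereas the paper gets both for free from $\N\in\Der(V)\otimes\Der(V)$ and \eqref{der4}, and your mode-expansion derivation of (4) from (3) replaces the paper's appeal to the equivalences \eqref{der1}--\eqref{der3}.
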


\begin{proof}
\eqref{3.9-1} If we write $\NN$ as in \eqref{logf14}, then we obtain from \eqref{nprod-5} that $\N$ has the form \eqref{logf-n} with
\begin{equation*}
\phi_i = \Theta\,\Phi_i\, Y_z = \Theta\,\Phi_i\,\Theta^{-1} \,, \qquad
\psi_i = \Theta\,\Psi_i\, Y_z = \Theta\,\Psi_i\,\Theta^{-1} \,.
\end{equation*}
In particular, this implies $\N\in\End(V)\otimes\End(V)$. The other conditions for a braiding map (see \deref{lm}) follow immediately from the corresponding properties of $\NN$.

\eqref{3.9-2} follows from the local nilpotency of $\NN$, because from \eqref{nprod-5} we have
\begin{equation*}
\N^i = (\Theta\otimes\Theta)\,\NN^i\, (Y_z\otimes Y_z) \,.
\end{equation*}

\eqref{3.9-3} and \eqref{3.9-4} are equivalent to each other and to $\N\in \Der(V)\otimes\End(V)$, due to \eqref{der1}--\eqref{der4}. Since $\N$ is symmetric, they are also equivalent to \eqref{3.9-5}.
To prove them, we recall that $\NN$ satisfies \eqref{hexagon}, and using \eqref{nprod-3}, \eqref{nprod-5}, we derive
\begin{equation*}
\N ({\mu}_{(n)}\otimes I) = ({\mu}_{(n)}\otimes I) (\N_{13}+\N_{23}) \,, \qquad n\in\ZZ\,.
\end{equation*}

\eqref{3.9-6} We find for $v\in V$:
\begin{equation*}
\Phi_i\bigl(Y(v,z)\bigr) = (Y_z\,\phi_i\,\Theta) Y(v,z) = Y_z(\phi_i v) = Y(\phi_i v,z) \,.
\end{equation*}
But since $\phi_i \in\Der(V)$, we have $Y(\phi_i v,z) = [\phi_i,Y(v,z)]$. Therefore, $\Phi_i=\ad(\phi_i)$ on $Y_z(V)$. Similarly, we have $\Psi_i=\ad(\psi_i)$ on $Y_z(V)$.
\end{proof}

\begin{remark}
Recall that our braiding map $\NN$ is defined on all of $\LF(V) \otimes \LF(V)$. However, in principle, the identity $\NN=(\ad\otimes\ad)(\N)$ holds only for the restrictions to the fields from
the $\NN$-logVA $V$. In practice, we start with the operator $\N$ and then define $\NN$ everywhere using that identity.
\end{remark}
\begin{remark}
We can start from an arbitrary locally nilpotent braiding map $\N' = \sum \phi_i' \otimes \psi_i'$ on $V\otimes V$ such that $[T,\phi_i']=[T,\psi_i']=0$ for all $i$. Then, as discussed
in \thref{l2.19}, the map $\NN=(\ad\otimes\ad)(\N')$ will have all the desired properties. We can still define $\N$ by \eqref{eq3.11}, and we will have $\NN=(\ad\otimes\ad)(\N)$,
but in general $\N\ne\N'$. For example, we can add different multiples of the identity operator to $\phi_i'$ and $\psi_i'$.
\end{remark}


\subsection{$\N$-logarithmic vertex algebras}\label{s3.2b}
Now we express the definition of a logarithmic vertex algebra using the endomorphism $\N$ instead of $\NN$. In Proposition \ref{d3.1b} below, we prove that this definition is equivalent to Definition \ref{d3.1}.

\begin{definition} \label{d3.11}
An \emph{$\N$-logarithmic vertex algebra} (abbreviated $\N$-logVA) 
is a vector superspace $V$ (space of states), equipped with an even vector $\vac\in V_{\bar 0}$ (vacuum vector), an even endomorphism $T\in \End(V)_{\bar 0}$ (translation operator),  an even linear map (state-field correspondence)
\[Y_z\colon  V\to \LF(V)\,, \qquad a\mapsto Y_z(a)=Y(a,z)\,, \]
and a braiding map $\N$ on $V$ (see \deref{lm}),
which are subject to the following axioms:

\medskip
(\emph{vacuum})\; $Y(\vac,z)=I$, \; $Y(a,z)\vac\in V[\![z]\!]$, \; $Y(a,z)\vac\big|_{z=0} = a$, \; $T\vac=0$.

\medskip
(\emph{covariance})\; 
$[T,Y(a,z)] = D_z Y(a,z)$.

\medskip
(\emph{nilpotence})\; $\N$ is locally nilpotent on $V\otimes V$.

\medskip
(\emph{locality})\;  For every $a,b\in V$, there exists $N\in\ZZ_+$ such that for all $c\in V$, 
\begin{equation}\label{logva5}
\begin{split}
Y(z_1)& (I \otimes Y(z_2)) z_{12}^{N} e^{\vartheta_{12}  \N_{12}} (a\otimes b\otimes c )\\
&=(-1)^{p(a)p(b)} \, Y(z_2) (I \otimes Y(z_1)) z_{12}^{N} e^{\vartheta_{21}  \N_{12}} (b\otimes a\otimes c ),
\end{split}
\end{equation}
where we use the notation \eqref{logva0}.

\medskip
(\emph{hexagon})\; The following identity is satisfied on $V^{\otimes 3}$: 
\begin{equation} \label{hexagonb}
\N (Y(z)\otimes I) = (Y(z)\otimes I) (\N_{13}+\N_{23})\,,
\end{equation}
where we use the notation \eqref{logf-1} and \eqref{logva0}.

\medskip
We denote an $\N$-logVA by $(V,\vac,T, Y,\N)$ or $V$ for short.
When the braiding map $\N$ is fixed, we will call $V$ just a \emph{logarithmic vertex algebra} or a logVA.
\end{definition}

\begin{remark}\label{re3.7}
As discussed in \seref{s3.2}, the hexagon axiom together with the symmetry of $\N$ is equivalent to the condition $\N\in \Der(V)\otimes \Der(V)$.
In particular, this implies that $[T\otimes I,\N]=0$.
\end{remark}

\begin{proposition} \label{d3.1b} The notion of an\/ $\NN$-logVA is equivalent to that of an\/ $\N$-logVA.
\end{proposition}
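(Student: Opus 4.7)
The plan is to exhibit explicit mutually inverse passages between the two structures: starting from an $\NN$-logVA, define $\N$ by \eqref{eq3.11}; starting from an $\N$-logVA, set $\NN=(\ad\otimes\ad)(\N)$ acting on $\LF(V)$ via the coefficients of the logarithmic fields, as in \leref{loccomp}. Every axiom except locality transfers by direct inspection; the crux will be translating the two forms of locality into one another using the intertwining identity $\NN(Y_{z_1}\otimes Y_{z_2})=(Y_{z_1}\otimes Y_{z_2})\N$.

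Passing from $\NN$-logVA to $\N$-logVA, \prref{l3.8cc} already delivers that the $\N$ built from \eqref{eq3.11} is a locally nilpotent braiding map on $V$ lying in $\Der(V)\otimes\Der(V)$ (so both the hexagon \eqref{hexagonb} and $[T\otimes I,\N]=0$ hold), and that the intertwining \eqref{nprod-6} is in force. Vacuum and covariance for $(\vac,T,Y)$ carry over verbatim. For the $\N$-form of locality \eqref{logva5}, I would expand $e^{\vartheta_{12}\NN}$ as a polynomial in $\NN$, iterate \eqref{nprod-6}, and apply $\mu$ to obtain
\[
\mu\bigl(z_{12}^N e^{\vartheta_{12}\NN}(Y(a,z_1)\otimes Y(b,z_2))\bigr)c = Y(z_1)(I\otimes Y(z_2))\,z_{12}^N e^{\vartheta_{12}\N_{12}}(a\otimes b\otimes c),
\]
using $\N^k(a\otimes b)\otimes c=\N_{12}^k(a\otimes b\otimes c)$, together with the analogous identity on the $\vartheta_{21}$-side. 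Applied to an arbitrary $c\in V$, the $\NN$-locality \eqref{logf11} for $Y(a,z_1),Y(b,z_2)$ then coincides with \eqref{logva5}.

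In the reverse direction, I would write $\N=\sum\phi_i\otimes\psi_i$ with the $\phi_i$ and $\psi_i$ linearly independent. By \reref{re3.7} each $\phi_i,\psi_i\in\Der(V)$, and \eqref{der4} gives $[T,\phi_i]=[T,\psi_i]=0$; the braiding relations for $\N$ combined with linear independence force $[\phi_i,\phi_j]=[\psi_i,\psi_j]=[\phi_i,\psi_j]=0$. Setting $\NN=(\ad\otimes\ad)(\N)$ acting on $\LF(V)$ by its action on coefficients, the conditions of \deref{lm} are verified through $\ad([A,B])=[\ad A,\ad B]$, and $[\ad(T)\otimes I,\NN]=[D_z\otimes I,\NN]=0$ follow at once. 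The derivation property of the $\phi_i,\psi_i$ makes the intertwining $\NN(Y_{z_1}\otimes Y_{z_2})=(Y_{z_1}\otimes Y_{z_2})\N$ automatic, so the computation above run backwards converts \eqref{logva5} into the $\NN$-locality of $Y_z(V)$, while local nilpotency of $\NN$ on $Y_z(V)\otimes Y_z(V)$ follows from that of $\N$ on $V\otimes V$. Once locality is in place, the identity $\hat\mu_{(n)}(Y_z\otimes Y_z)=Y_z\mu_{(n)}$ of \prref{pro3.12} (whose proof uses only locality and \coref{l2.18b}) combines with the hexagon \eqref{hexagonb} for $\N$ to yield the hexagon \eqref{hexagon} for $\NN$ on $Y_z(V)^{\otimes 3}$.

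Finally, I would verify that the two constructions are mutually inverse: recomputing $\N$ from the $\NN$ attached to an $\N$-logVA gives $\sum \phi_i a\otimes\psi_i b$ by the vacuum axiom applied to \eqref{eq3.11}, and \prref{l3.8cc}\eqref{3.9-6} handles the other direction on $Y_z(V)\otimes Y_z(V)$. The main obstacle throughout is the locality translation, which reduces to the single intertwining identity $\NN(Y_{z_1}\otimes Y_{z_2})=(Y_{z_1}\otimes Y_{z_2})\N$; after that, every remaining axiom transfers through bookkeeping.
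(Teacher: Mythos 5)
Your proposal is correct and follows essentially the same route as the paper: in one direction it invokes Proposition \ref{l3.8cc} and the intertwining identity \eqref{nprod-6} to translate $\NN$-locality into \eqref{logva5}, and in the other it sets $\NN=(\ad\otimes\ad)(\N)$ acting on coefficients and runs the same translation backwards. The paper's proof is merely terser, leaving the expansion of $e^{\vartheta_{12}\NN}$ and the mutual-inverse check implicit.
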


\begin{proof} 
If we have an $\NN$-logVA $V$, then the operator $\N$, defined by \eqref{eq3.11}, satisfies all properties from \prref{l3.8cc}.
Locality in the form \eqref{logva5} follows from \eqref{nprod-6} and the $\NN$-locality \eqref{logf11} of the fields $a(z)=Y(a,z)$ and $b(z)=Y(b,z)$.
Hence, $V$ is an $\N$-logVA.

Conversely, suppose that $V$ is an $\N$-logVA. Define $\NN=(\ad\otimes\ad)(\N)$ as a linear operator on $\End(V) \otimes \End(V)$
and extend it to $\LF(V) \otimes \LF(V)$ by letting it act on the coefficients of the logarithmic fields.
Since $\N\in \Der(V)\otimes \Der(V)$ (by \reref{re3.7}), from the proof of \prref{l3.8cc}\eqref{3.9-6} we can derive \eqref{nprod-6}.
As before, this implies that $\NN$ is locally nilpotent on $Y_{z}(V)\otimes Y_{z}(V)$, because $\N$ is locally nilpotent on $V\otimes V$.
The remaining properties of $\NN$ and the $\NN$-locality again follow from \eqref{nprod-6}. Hence, $V$ is an $\NN$-logVA.
\end{proof}


If $V$ is a logVA with $\N=0$, then from \eqref{logva3} we see that all fields $Y(a,z)$, $a\in V$, are independent of $\zeta$.   
Hence, a logVA with $\N=0$ is the same as an ordinary vertex algebra (see, e.g., \cite[Chapter 4]{K}).
More generally, for a logVA $V$, consider the subspace
\begin{equation}\label{V0}
V^0 = \{ a\in V \,|\, \N(a\otimes b)=0 \;\;\text{for all}\;\; b\in V\} \,.
\end{equation}
Then $V^0$ is an ordinary vertex algebra and is a subalgebra of $V$.

\begin{remark}\label{re3.8} The locality \eqref{logva5} can be equivalently expressed as follows:
\begin{equation}\label{3.8}
\begin{split}
z_{12}^{N} \Bigl( Y({z_1}) & (I \otimes Y({z_2}))(e^{(\vartheta_{12}-\eta)\N} (a\otimes b)\otimes c) \Bigr)\Big|_{\eta=\vartheta_{21}}\\
& = (-1)^{p(a)p(b)} \, z_{12}^{N} \, Y(b,z_2) Y(a,z_1) c \,,
\end{split}
\end{equation}
for $a,b,c\in V$ and some integer $N\geq 0$ that depends only on $a$ and $b$.
Note that we cannot evaluate $\eta=\vartheta_{21}$ inside the parentheses, since the product $\vartheta_{12} \vartheta_{21}$ and hence the operator $e^{(\vartheta_{12}-\vartheta_{21})\N}$ are not well defined. 

Note that equation \eqref{3.8} is reminiscent of the definition of locality in \emph{quantum vertex algebras} \cite{FR, EK, DGK}, where the map $e^{\ze\N} \colon V\otimes V\rightarrow (V\otimes V)[\ze]$ plays the role of a braiding map. Additionally, our hexagon identity is analogous to the hexagon identity for quantum vertex algebras, while condition \eqref{2.4-3} in \deref{lm} trivially implies that $\N$ is a solution of the classical Yang--Baxter equation.  
\end{remark}


\subsection{Conformal logarithmic vertex algebras}\label{s3.2c}
As we mention in the introduction, the following definition is motivated by logarithmic conformal field theory (see \cite{G1,G2, Ga}).

\begin{definition}\label{d3.10}
A logVA is called \emph{conformal} of central charge $c\in\CC$ if there exists an even vector $\om\in V_{\bar 0}$ (the \emph{conformal vector}),
with the following properties:

\begin{enumerate}

\item\label{3.17-1}
$\N(\om\otimes v) = 0$ for all $v\in V$ (i.e., $\om\in V^0$, see \eqref{V0}).

\medskip
\item\label{3.17-2}
The modes $L_n:=\om_{(n+1+\N)}$
satisfy the commutation relations of the Virasoro Lie algebra with central charge $c$.

\medskip
\item\label{3.17-3}
$L_{-1}=T$ is the translation operator of $V$.

\medskip
\item\label{3.17-4}
$L_{0}$ is \emph{locally finite} on $V$ (i.e., every $v\in V$ is contained in a finite-dimensional subspace invariant under $L_0$).
\end{enumerate}
\end{definition}

Condition \eqref{3.17-1} above and \eqref{logva3} imply that
\begin{equation}\label{Virfield}
Y(\om,z)=L(z)=\sum_{n\in \ZZ}L_{n} z^{-n-2}
\end{equation}
is a \emph{Virasoro field} (called the stress-energy tensor and usually denoted as $T(z)$ in the physics literature).
Notice that the difference between this definition and that of a conformal vertex algebra is that for the latter the linear operator $L_{0}$ must be semisimple (see \cite{K}).
Since $L_0$ is locally finite, it has a \emph{Jordan--Chevalley decomposition}
\begin{equation}\label{JCD}
L_{0}=L^{(s)}_{0}+L_{0}^{(n)} \,, \qquad  \bigl[L^{(s)}_{0}, L_{0}^{(n)}\bigr] = 0 \,,
\end{equation}
where $L^{(s)}_{0}$ is semisimple and $L^{(n)}_{0}$ is locally nilpotent on $V$. 

Because the Virasoro field $Y(\om,z)$ is non-logarithmic, it satisfies the usual commutator formula for vertex algebras: 
\begin{align*}
[Y(\om,z_1),Y(a,z_2)] &= \sum_{j\in\ZZ_+} Y(\om_{(j+\N)} a, z_2) \, \partial_{z_2}^{(j)} \delta(z_1,z_2)
\end{align*}
(see \coref{corbor} below).
Comparing the coefficients in front of $z_1^{-2}$ gives
\begin{equation}\label{L0}
\begin{split}
[L_{0},Y(a,z)] &=zY(L_{-1}a,z)+ Y(L_{0}a,z)\\
&=zD_{\z}Y(a,z)+ Y(L_{0}a,z)\,,
\end{split}
\end{equation}
just like for ordinary vertex algebras (see \cite{K}).
From the uniqueness of the Jordan--Chevalley decomposition $zD_{z}=z\partial_{z}+\partial_{\zeta}$, we obtain:
\begin{align*}
\bigl[L_{0}^{(s)}, Y(a,z)\bigr]&=z\partial_{z}Y(a,z)+Y\bigl(L_{0}^{(s)}a,z\bigr)\, , \\
\bigl[L_{0}^{(n)}, Y(a,z)\bigr]&=\partial_{\ze}Y(a,z)+Y\bigl(L_{0}^{(n)}a,z\bigr)\,.
\end{align*} 
In particular, we can exponentiate the operator $\partial_{\ze}$ and derive from the second equation that
\begin{equation*}
Y(a,z)=e^{\ze L_{0}^{(n)}}X\bigl(e^{-\ze L_{0}^{(n)}}a,z\bigr)e^{-\ze L_{0}^{(n)}}\, .
\end{equation*}

On the other hand, we know from \eqref{logva3} that the $\ze$-dependence of $Y(a,z)$ is governed by the braiding map $\N$.
The following proposition relates the operators $L_{0}^{(n)}$ and $\N$ in a special case, which appears often in examples.

\begin{proposition}\label{pro3.11} Let\/ $V$ be a  conformal logVA, in which the locally nilpotent part of the Virasoro operator\/ $L_0$ has the form
\begin{equation*}
L_{0}^{(n)}=d+\sum_{i=1}^L f_i g_i 
\end{equation*}
for some\/ $d,f_i,g_i\in \Der(V)$ with\/ $p(f_i)=p(g_i)$ and\/ $p(d)=\bar0$.
Then for all\/ $a,b\in V$, we have
\begin{equation*}
\N(a\otimes b)=-\sum_{i=1}^L \Bigl(f_i{\otimes} g_i+(-1)^{p(f_i)}g_i{\otimes} f_i\Bigr)(a\otimes b) \mod \Ker Y(z) \, .
\end{equation*}
\end{proposition}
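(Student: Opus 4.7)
The plan is to compute $\partial_\ze Y(a,z)b$ in two independent ways and compare the results.

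First, from the relation $Y(z) = X(z)\, e^{-\ze\N}$ of \eqref{logva3}, since $X(z)$ is independent of $\ze$, one derives immediately
$$\partial_\ze Y(z) = -Y(z)\,\N$$
as linear maps $V \otimes V \to V(\!(z)\!)[\ze]$.

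Second, I would start from the commutator identity $[L_0^{(n)}, Y(a,z)] = \partial_\ze Y(a,z) + Y(L_0^{(n)} a, z)$ established just before the proposition, and substitute $L_0^{(n)} = d + \sum_i f_i g_i$. For any derivation $D \in \Der(V)$ one has $[D, Y(a,z)] = Y(Da,z)$ (a standard consequence of the Leibniz rule $D\,Y(a,z)b = Y(Da,z)b + (-1)^{p(D) p(a)} Y(a,z)\,Db$), so the linear term $d$ contributes $Y(da,z)b$. For each quadratic term $f_i g_i$ I would commute $f_i$ and then $g_i$ past $Y(a,z)$, using
$$\phi\,Y(a,z) = (-1)^{p(\phi) p(a)}\,Y(a,z)\,\phi + Y(\phi a, z), \qquad \phi \in \Der(V),$$
twice in succession. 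After expanding the four resulting terms and using $p(f_i)=p(g_i)$, the piece involving $Y(a,z) f_i g_i b$ will cancel against $Y(a,z) L_0^{(n)} b$ in the supercommutator, and the two remaining mixed terms can be recognized via \eqref{logf-nact} as
$$[f_i g_i, Y(a,z)]b = Y(f_i g_i a, z)\,b + Y(z)(f_i \otimes g_i)(a \otimes b) + (-1)^{p(f_i)}\,Y(z)(g_i \otimes f_i)(a \otimes b).$$

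Summing over $i$, adding the $d$-contribution, and equating with $\partial_\ze Y(a,z)b + Y(L_0^{(n)} a, z)b$, the $Y(L_0^{(n)}a,z)b$ terms on the two sides match and disappear, leaving
$$\partial_\ze Y(a,z)b = Y(z) \sum_{i=1}^{L} \bigl(f_i \otimes g_i + (-1)^{p(f_i)}\,g_i \otimes f_i\bigr)(a \otimes b).$$
Combining this with $\partial_\ze Y(z) = -Y(z)\N$ from the first step yields
$$Y(z)\Bigl(\N + \sum_{i=1}^L \bigl(f_i \otimes g_i + (-1)^{p(f_i)} g_i \otimes f_i\bigr)\Bigr)(a \otimes b) = 0,$$
which is precisely the claim modulo $\Ker Y(z)$.

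The main obstacle I expect is the super-sign bookkeeping inside the quadratic commutator computation. One has to track signs carefully from pushing $f_i, g_i$ past the parity-$p(a)$ operator $Y(a,z)$, and reconcile them with the convention \eqref{logf-nact} for the action of $\phi \otimes \psi$ on $V \otimes V$. The hypothesis $p(f_i)=p(g_i)$ is essential throughout: it makes $p(f_i)+p(g_i) \equiv 0 \pmod{2}$, so that the sign attached to $Y(a,z) f_i g_i b$ disappears and that term cancels cleanly in the commutator, while $p(f_i) p(g_i) \equiv p(f_i) \pmod{2}$ is exactly what produces the single overall factor $(-1)^{p(f_i)}$ attached to the transposed term $g_i \otimes f_i$ in the final formula.
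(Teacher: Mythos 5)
Your proposal is correct and follows essentially the same route as the paper: both start from $\partial_\ze Y(z) = -Y(z)\,\N$ (a consequence of \eqref{logva3}) and then compute $[L_0^{(n)}-d,\,Y(a,z)]b$ via the derivation property of the $f_i,g_i$, identifying the mixed terms with $Y(z)\bigl(f_i\otimes g_i+(-1)^{p(f_i)}g_i\otimes f_i\bigr)(a\otimes b)$ under the sign convention \eqref{logf-nact}. Your sign bookkeeping, including the role of $p(f_i)=p(g_i)$ in cancelling the $Y(a,z)f_ig_ib$ term and producing the factor $(-1)^{p(f_i)}$, matches the paper's computation.
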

\begin{proof} 
By \eqref{logva3}, $\partial_{\ze}Y(a,z)b=-Y(z)\N (a\otimes b)$. On the other hand, using that $d$, $f_i$ and $g_i$ are derivations, we find:
\allowdisplaybreaks
\begin{align*}
\partial_{\ze} &Y(a,z)b +Y\bigl((L_{0}^{(n)}-d)a,z\bigr)b \\ 
&=\bigl[L_{0}^{(n)} - d, Y(a,z)\bigr]b \\
&= \sum_{i=1}^L \Bigl( f_i[g_i,Y(a,z)] + (-1)^{p(g_i)p(a)} [f_i,Y(a,z)] g_i \Bigr)b \\
&=\sum_{i=1}^L \Bigl( f_i Y(g_i a,z) + (-1)^{p(g_i)p(a)} Y(f_i a,z) g_i \Bigr)b \\
&=\sum_{i=1}^L \Bigl( Y( f_i g_i a,z) \\
&\qquad\quad+ (-1)^{p(f_i)(p(a)+p(g_i))} Y(g_i a,z) f_i + (-1)^{p(g_i)p(a)} Y(f_i a,z) g_i \Bigr)b \\
&= Y\bigl((L_{0}^{(n)}-d)a,z\bigr)b + \sum_{i=1}^L Y(z) \bigl(f_i{\otimes} g_i+(-1)^{p(f_i)}g_i{\otimes} f_i\bigr)(a\otimes b) \,.
\end{align*}
Thus,
\begin{equation*}
\partial_{\ze} Y(a,z)b = \sum_{i=1}^L Y(z) \bigl(f_i{\otimes} g_i+(-1)^{p(f_i)}g_i{\otimes} f_i\bigr)(a\otimes b) \,,
\end{equation*}
which completes the proof.
\end{proof}

\begin{remark}
Suppose that a conformal logVA $V$ has two different conformal vectors $\om'$ and $\om''$ with corresponding Virasoro operators $L'_n$
and $L''_n$, respectively. Then \eqref{L0} implies that $L'_0-L''_0 \in\Der(V)$. Since the locally nilpotent part of a derivation is a derivation
(see, e.g., \cite[Lemma 4.6]{B}), we obtain that $(L'_0)^{(n)}-(L''_0)^{(n)} \in\Der(V)$.
If we can apply \prref{pro3.11} for the conformal vector $\om'$, then we can also apply it for $\om''$ and we will get the same braiding map.
\end{remark}

\subsection{Skew-symmetry, associativity, and OPE}\label{s3.4} 
Logarithmic vertex algebras satisfy skew-symmetry and associativity properties similar to those of ordinary vertex algebras and generalized vertex algebras
(see \cite{DL,FB,LL,BK2}).
Below we will sometimes specify explicitly the dependence of the logarithmic fields on both $z$ and $\ze$,
writing $Y(a,z,\ze)$ and $Y(z,\ze)$ instead of $Y(a,z)$ and $Y(z)$, respectively.
The next lemma is useful to prove the skew-symmetry relation.

\begin{lemma}\label{l3.15} 
In any logVA, we have\/ $Y(a,\z)\vac=e^{zT}a$ and
\begin{equation*}
e^{-z_{2}T}Y(a,\z_{1},\ze_1)e^{z_{2}T}=\iota_{z_{1},z_{2}}Y(a,\z_{12}, \ze_{12})\,,
\end{equation*}
where we use the expansions\/ \eqref{iota}, \eqref{iota2}.
\end{lemma}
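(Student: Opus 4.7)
The plan is to prove the two identities in turn, reducing the second to a formal Taylor shift argument implemented by the operator $e^{-z_2 D_{z_1}}$.

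For the first identity, set $f(z):=Y(a,z)\vac$. The vacuum axiom gives $f(z)\in V[\![z]\!]$ with $f(0)=a$, so $f$ has no $\ze$-dependence and $D_z f = \partial_z f$. Applying $T$ and using $T\vac=0$ with the covariance axiom $[T,Y(a,z)]=D_z Y(a,z)$, one obtains
\[
Tf(z) = [T,Y(a,z)]\vac + Y(a,z)\,T\vac = D_z Y(a,z)\vac = \partial_z f(z).
\]
The formal ODE $\partial_z f = Tf$ with $f(0)=a$ has the unique solution $f(z)=e^{zT}a$.

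For the second identity, I would first expand the conjugation as
\[
e^{-z_2 T} Y(a,z_1) e^{z_2 T} = \sum_{k\ge 0}\frac{(-z_2)^k}{k!}(\ad T)^k Y(a,z_1),
\]
and use the covariance axiom inductively (noting that $T$ and $D_{z_1}$ commute, as they act on disjoint slots) to identify $(\ad T)^k Y(a,z_1)=D_{z_1}^k Y(a,z_1)$. Hence the left-hand side equals $e^{-z_2 D_{z_1}}Y(a,z_1,\ze_1)$, and it remains to show that this coincides with $\iota_{z_1,z_2}Y(a,z_{12},\ze_{12})$.

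Since $D_{z_1}$ is a derivation, $e^{-z_2 D_{z_1}}$ is an algebra automorphism on the ambient space of formal series, determined by its action on the ``generators'' $z_1^{\pm 1}$ and $\ze_1$. On $z_1^k$ ($k\in\ZZ$) it yields the Taylor shift $(z_1-z_2)^k$, whose expansion in the domain $|z_1|>|z_2|$ is exactly $\iota_{z_1,z_2}z_{12}^k$. On $\ze_1$, the computation $D_{z_1}^n\ze_1=(-1)^{n-1}(n-1)!\,z_1^{-n}$ for $n\ge 1$ gives
\[
e^{-z_2 D_{z_1}}\ze_1 = \ze_1 - \sum_{n\ge 1}\frac{1}{n}\,z_1^{-n}z_2^n = \vartheta_{12} = \iota_{z_1,z_2}\ze_{12}.
\]
Multiplicativity then delivers the claimed identity. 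The main obstacle is a careful bookkeeping check that applying $e^{-z_2 D_{z_1}}$ to $Y(a,z_1,\ze_1)v$ for fixed $v\in V$ produces a series lying in the image of $\iota_{z_1,z_2}$, i.e., a well-defined formal expansion in the domain $|z_1|>|z_2|$; this follows because the polynomial $\ze$-dependence of $Y(a,z_1,\ze_1)v\in V(\!(z_1)\!)[\ze_1]$ is preserved at each step and only finitely many additional negative powers of $z_1$ are introduced per application of $D_{z_1}$.
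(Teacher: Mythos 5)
Your proposal is correct and follows essentially the same route as the paper: the first identity from translation covariance plus $T\vac=0$ and the vacuum axiom, and the second by rewriting the conjugation as $e^{-z_2\ad(T)}=e^{-z_2 D_{z_1}}$ and applying the formal Taylor shift to the generators $z_1^{\pm1}$ and $\ze_1$, yielding $\iota_{z_1,z_2}z_{12}^k$ and $\vartheta_{12}=\iota_{z_1,z_2}\ze_{12}$. The extra details you supply (the computation $D_{z_1}^n\ze_1=(-1)^{n-1}(n-1)!\,z_1^{-n}$ and the well-definedness bookkeeping) are accurate.
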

\begin{proof} 
The identity $Y(a,\z)\vac=e^{zT}a$ follows from $T\vac=0$ and the translation covariance of $Y(a,z)$; see \eqref{logf22c}.
Again by translation covariance, we have
\begin{equation*}
e^{-z_{2}T}Y(a,\z_{1},\ze_1)e^{z_{2}T} = e^{-z_{2} \ad(T)}Y(a,\z_{1},\ze_1) = e^{-z_{2}D_{z_1}}Y(a,\z_{1},\ze_1) \,.
\end{equation*}
Then the second identity in the lemma follows from Taylor's formula, since $e^{-z_{2}D_{z_1}} z_1^n = \iota_{z_{1},z_{2}} z_{12}^n$
and $e^{-z_{2}D_{z_1}} \ze_1 = \iota_{z_{1},z_{2}} \ze_{12}$ for $n\in\ZZ$.
\end{proof}

\begin{proposition}[Skew-symmetry] \label{pro3.16}
For any logVA\/ $V$ and\/ $a,b\in V$, we have
\begin{equation*}
Y(a,z,\ze) b = (-1)^{p(a)p(b)} e^{zT}Y(b,-z,\ze)a \,.
\end{equation*}
In particular, 
\begin{equation*}
X(a,z) b = (-1)^{p(a)p(b)} e^{zT}X(b,-z)a \,.
\end{equation*}
\end{proposition}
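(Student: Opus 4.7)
The plan is to first prove the second (simpler) identity, namely
\begin{equation*}
X(a,z) b = (-1)^{p(a)p(b)} e^{zT}X(b,-z)a,
\end{equation*}
and then deduce the full $Y$-skew-symmetry from it using the relation $Y(z,\ze) = X(z)\, e^{-\ze\N}$ of \eqref{logva3} together with the symmetry $P\N = \N P$.

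First I would apply the locality axiom \eqref{logva5} with $c = \vac$. Since $\N_{12}$ acts trivially on the third tensor factor, $e^{\vartheta_{12}\N_{12}}(a\otimes b\otimes\vac) = e^{\vartheta_{12}\N}(a\otimes b)\otimes\vac$. Writing $e^{\vartheta_{12}\N}(a\otimes b) = \sum_{s,j} \frac{\vartheta_{12}^s}{s!}(a_j^{(s)}\otimes b_j^{(s)})$ with $\N^s(a\otimes b) = \sum_j a_j^{(s)}\otimes b_j^{(s)}$, and using $Y(v,z_2)\vac = e^{z_2 T}v$ from \eqref{logf22c}, the LHS becomes a sum of terms $Y(a_j^{(s)},z_1,\ze_1)\, e^{z_2 T} b_j^{(s)}$. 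Lemma~\ref{l3.15} lets me conjugate $Y(a_j^{(s)},z_1,\ze_1)\, e^{z_2 T} = e^{z_2 T}\, \iota_{z_1,z_2} Y(a_j^{(s)},z_{12},\ze_{12})$. Rewriting $\vartheta_{12}^s = \iota_{z_1,z_2}(\ze_{12}^s)$ and pulling all these factors inside the single $\iota$-expansion, the LHS reduces by $Y(z,\ze)\, e^{\ze\N} = X(z)$ (from \eqref{logva3}) to
\begin{equation*}
z_{12}^N\, e^{z_2 T}\, \iota_{z_1,z_2}\bigl[X(a,z_{12})\, b\bigr].
\end{equation*}
A symmetric computation converts the RHS into
\begin{equation*}
(-1)^{p(a)p(b)} z_{12}^N\, e^{z_1 T}\, \iota_{z_2,z_1}\bigl[X(b,z_{21})\, a\bigr], \qquad z_{21} = -z_{12}.
\end{equation*}

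For $N$ a sufficiently large even integer, the Laurent series $z_{12}^N X(a,z_{12})b$ and $z_{21}^N X(b,z_{21})a$ lie in $V[\![z_{12}]\!] \subset V[\![z_1,z_2]\!]$, so both $\iota$-maps act as the identity and the displayed equality holds in $V[\![z_1,z_2]\!]$. Setting $z_2 = 0$, using that $z_{12}|_{z_2=0} = z_1$, $z_{21}|_{z_2=0} = -z_1$, and $(-1)^N = 1$, I obtain
\begin{equation*}
z_1^N\, X(a,z_1)\, b = (-1)^{p(a)p(b)}\, z_1^N\, e^{z_1 T}\, X(b,-z_1)\, a \quad\text{in}\quad V(\!(z_1)\!),
\end{equation*}
and cancelling $z_1^N$ (which is not a zero divisor in $V(\!(z_1)\!)$) yields the skew-symmetry for $X$. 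To pass to $Y$, I restate the desired identity as the map equality $Y(z,\ze) = e^{zT}\, Y(-z,\ze)\, P$ on $V\otimes V$; since $Y(z,\ze) = X(z)\, e^{-\ze\N}$ and $P$ commutes with $e^{-\ze\N}$ (by $P\N = \N P$), this reduces to $X(z) = e^{zT}\, X(-z)\, P$, which is precisely the skew-symmetry for $X$ just established.

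The main obstacle is the careful bookkeeping of the $\ze$-dependence during the conjugation step and the cancellation of the $\vartheta_{12}$ factors: one must verify that after pulling $\vartheta_{12}^s = \iota_{z_1,z_2}(\ze_{12}^s)$ inside the $\iota$-expansion, the $\ze_{12}$ dependence is completely absorbed by $Y e^{\ze\N} = X$, so that both sides land in the $\ze$-free space $V[\![z_1,z_2]\!]$ where setting $z_2=0$ and cancelling $z_1^N$ are legitimate operations.
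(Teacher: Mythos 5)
Your proposal is correct and follows essentially the same route as the paper's proof: locality with $c=\vac$, the conjugation formula of Lemma \ref{l3.15} together with $Y(z,\ze)e^{\ze\N}=X(z)$ to reduce both sides to expansions of $z_{12}^{N}X(a,z_{12})b$ and $z_{21}^{N}X(b,z_{21})a$, a choice of large (even) $N$ to justify setting $z_{2}=0$, and finally the substitution $a\otimes b\mapsto e^{-\ze\N}(a\otimes b)$ with $P\N=\N P$ to upgrade the $X$-identity to the $Y$-identity. The only cosmetic difference is that the paper sets $z_{2}=0$ directly in the left-hand side (using $\vartheta_{12}|_{z_2=0}=\ze_1$) rather than conjugating both sides into $\iota$-expansions, but this does not change the substance of the argument.
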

\begin{proof} 
We use the locality relation \eqref{logva5} for $c=\vac$. By Lemma \ref{l3.15} and \eqref{logva3}, 
we can set $z_2=0$ in the left-hand side of \eqref{logva5} and obtain:
\begin{align*}
Y(z_1)&(I \otimes Y(z_2))z_{12}^{N} e^{\vartheta_{12}  \N_{12}} (a\otimes b\otimes \vac)\big|_{z_{2}=0}\\
& =Y(z_1)z_{1}^{N} e^{\ze_1\N}(a\otimes b)=z_{1}^{N}X(z_1)(a\otimes b)\, .
\end{align*}
Again from Lemma \ref{l3.15}, \eqref{logva3} and \eqref{iota2}, we find that the right-hand side of \eqref{logva5} gives:
\begin{align*}
 Y(z_2) &(I \otimes Y(z_1)) z_{12}^{N} e^{\vartheta_{21}  \N_{12} } (b\otimes a \otimes \vac) \\
 &= \mu\bigl((Y_{z_2} \otimes e^{z_1 T}) z_{12}^{N} e^{\vartheta_{21}  \N} (b\otimes a) \bigr) \\
&=e^{z_1 T} \iota_{z_2,z_1} z_{12}^{N} Y(z_{21},\ze_{21}) e^{\ze_{21}  \N} (b\otimes a) \\
&=e^{z_1 T} \iota_{z_2,z_1} z_{12}^{N} X(z_{21}) (b\otimes a) \,.
\end{align*}
We pick $N$ large enough so that $z^N X(b,z)a$ has only non-negative powers of $z$. Then it makes sense to set $z_2=0$
in the above expression and get
\begin{equation*}
z_{1}^{N}e^{z_1 T}X(-z_{1}) (b\otimes a) \,.
\end{equation*}
Therefore,
\begin{equation*}
X(z) (a\otimes b) = (-1)^{p(a)p(b)} e^{z T}X(-z)(b\otimes a)\,.
\end{equation*}
To finish the proof, we replace in this identity $a\otimes b$ with $e^{-\ze\N}(a\otimes b) \in (V\otimes V)[\ze]$ and use again \eqref{logva3}.
\end{proof}

In the associativity relation below and its proof, it will be convenient to use notation \eqref{logf-1} and \eqref{z12N}.

\begin{proposition}[Associativity]\label{pro3.17}
For any logVA\/ $V$ and\/ $a,b,c\in V$, there exists $N\in \ZZ_{+}$ such that 
\begin{equation}\label{logva9}
\begin{split}
\iota_{z_{1},z_{2}} & Y({z_{12}},\ze_{12}) (I\otimes Y({-z_2},\ze_2)) \z_{12}^N\,e^{\vartheta_{12} \NN_{13}} (a\otimes b\otimes c)\\
&=Y({-z_2},\ze_2)(Y({z_1},\ze_1)\otimes I) \z_{12}^N\,e^{\vartheta_{21} \NN_{13}} (a\otimes b\otimes c)\,,
\end{split}
\end{equation}
where the expansion\/ $\iota_{z_1,z_2}$ is as in\/ \eqref{iota}, \eqref{iota2}.
\end{proposition}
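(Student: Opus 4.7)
The plan is to reduce both sides of \eqref{logva9} to a common expression to which the locality axiom \eqref{logva5}, applied to the pair $(a,c)$ with $b$ as the third vector, can be used. The two main tools are skew-symmetry (Proposition~\ref{pro3.16}) and the translation-covariance identity of Lemma~\ref{l3.15}. The strategy mirrors the classical derivation of associativity from locality for ordinary vertex algebras, but with careful bookkeeping of the $\N$-twists.

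First I would write $\N=\sum_i\phi_i\otimes\psi_i$ and expand $e^{\vartheta_{12}\N_{13}}(a\otimes b\otimes c)$ as a finite sum of terms of the form $\vartheta_{12}^{|I|}/|I|!\cdot(\text{sign})\cdot\alpha_I(a)\otimes b\otimes\beta_I(c)$ with $\alpha_I=\phi_{i_1}\cdots\phi_{i_{|I|}}$ and $\beta_I=\psi_{i_1}\cdots\psi_{i_{|I|}}$; the local nilpotency of $\N$ makes this sum finite. Thus the LHS becomes a finite sum of terms $Y(\alpha_I(a),z_{12})Y(b,-z_2)\beta_I(c)$. Apply skew-symmetry to the inner product to get $Y(b,-z_2)\beta_I(c)=(-1)^{p(b)p(c)}e^{-z_2T}Y(\beta_I(c),z_2)b$, and then apply Lemma~\ref{l3.15} in the form $\iota_{z_1,z_2}Y(\alpha_I(a),z_{12})=e^{-z_2T}Y(\alpha_I(a),z_1)e^{z_2T}$. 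After cancelling the adjacent $e^{\pm z_2T}$, the LHS takes the form
\begin{equation*}
(-1)^{p(b)p(c)}\,e^{-z_2T}\,z_{12}^N\,Y(z_1)(I\otimes Y(z_2))\,e^{\vartheta_{12}\N_{12}}(a\otimes c\otimes b),
\end{equation*}
where the identification uses the fact that $\N_{13}$ on $(a,b,c)$ and $\N_{12}$ on $(a,c,b)$ both act as $\N$ on the pair $(a,c)$ leaving the remaining factor untouched, so the two exponentials agree up to parity signs coming from the symmetry \eqref{logf18s} of $\N$.

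For the RHS, apply skew-symmetry to the outer product $Y(u,-z_2)v$ with $u=Y(\alpha_I(a),z_1)b$ and $v=\beta_I(c)$, producing the sign $(-1)^{(p(a)+p(b))p(c)}$ and yielding
\begin{equation*}
(-1)^{(p(a)+p(b))p(c)}\,e^{-z_2T}\,z_{12}^N\,Y(z_2)(I\otimes Y(z_1))\,e^{\vartheta_{21}\N_{12}}(c\otimes a\otimes b),
\end{equation*}
after the analogous identification of the exponential factors. Now choose $N$ large enough so that the locality axiom \eqref{logva5} holds for the pair $(a,c)$; since $\N$ is locally nilpotent on $V\otimes V$, only finitely many iterates appear, so a single $N$ works uniformly. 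Locality with $a,c,b$ in place of $a,b,c$ says exactly that the two expressions inside $e^{-z_2T}z_{12}^N(\cdots)$ are equal up to the factor $(-1)^{p(a)p(c)}$. Combining the signs, $(-1)^{(p(a)+p(b))p(c)}\cdot(-1)^{p(a)p(c)}=(-1)^{p(b)p(c)}$, which matches the LHS sign, and the two sides coincide.

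The main obstacle will be the careful tracking of parity signs, especially verifying that the parity factors produced by rearranging $(a,b,c)\leftrightarrow(a,c,b)$ and by the two applications of skew-symmetry combine in the right way with the locality sign $(-1)^{p(a)p(c)}$; this is where the symmetry condition \eqref{logf18s} enters decisively. A secondary subtlety is justifying the use of Lemma~\ref{l3.15} on the operator-valued expression with the expansion $\iota_{z_1,z_2}$ distributing correctly over the composition; this is routine once one notes that $e^{-z_2T}Y(a_I,z_1)e^{z_2T}$ and $Y(b,-z_2)\beta_I(c)$ live in disjoint variable rings and their product is well-defined as a formal series.
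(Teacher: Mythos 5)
Your proposal is correct and follows essentially the same route as the paper: the paper applies $e^{-z_2T}$ to both sides of the locality relation for the triple $(a,c,b)$ and uses Lemma \ref{l3.15} together with skew-symmetry (Proposition \ref{pro3.16}) to transform each side into the corresponding side of \eqref{logva9}, which is exactly your computation run in the opposite direction. The sign and $\N_{13}$-versus-$\N_{12}$ bookkeeping you flag does work out as you describe.
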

\begin{proof} 
Let us apply $e^{-z_{2}T}$ to both sides of the locality relation \eqref{logva5}.
By  Lemma \ref{l3.15} and  Proposition \ref{pro3.16}, the left-hand side of \eqref{logva5} becomes
\begin{align*}
e^{-z_{2}T} &Y({z_1}) (I \otimes Y({z_2})) z_{12}^{N+\N_{12}} (a\otimes b\otimes c) \\
&=\iota_{z_{1},z_{2}}Y({z_{12}},\ze_{12}) (I \otimes e^{-z_{2}T}Y({z_2})) z_{12}^{N+\N_{12}} (a\otimes b\otimes c)\\
& =(-1)^{p(b)p(c)}\iota_{z_{1},z_{2}}Y(z_{12},\ze_{12}) (I\otimes Y({-z_2},\ze_2)) z_{12}^{N+\N_{13}} (a\otimes c\otimes b)\,,
\end{align*}
while the right-hand side of \eqref{logva5} becomes
\begin{align*}
(-1)^{p(a)p(b)} &e^{-z_{2}T} Y({z_2}) (I\otimes Y({z_1})) z_{21}^{N+\N_{12}} (b\otimes a\otimes c)\\
&=(-1)^{p(b)p(c)}Y({-z_2},\ze_2)(Y({z_1})\otimes I) z_{21}^{N+\N_{13}} (a\otimes c\otimes b)\, .
\end{align*}
This completes the proof.
\end{proof}

If we replace $z_2$ with $-z_2$ in the associativity identity \eqref{logva9}, it becomes equivalent to:
\begin{equation}\label{logva10}
\begin{split}
\iota_{z_{1},z_{2}} & (z_1+z_2)^{N}Y({z_1+z_{2}}, \tilde{\ze}_{12}) (I\otimes Y({z_2})) e^{\tilde{\ze}_{12}\N_{13}} (a\otimes b\otimes c)\\
& = \iota_{z_2,z_1} (z_1+z_2)^{N} Y({z_2}) (Y({z_1})\otimes I) e^{\tilde{\ze}_{21}\N_{13}} (a\otimes b\otimes c)\,,
\end{split}
\end{equation}
where $\tilde{\ze}_{12}$ and $\tilde{\ze}_{21}$ are new formal variables and (cf.\ \eqref{iota7})
\begin{equation}\label{iota14}
\iota_{z_{1},z_{2}} \tilde{\ze}_{12}:=\zeta_{1}-\sum_{j=1}^\infty \frac{(-1)^{j}}{j} z_{1}^{-j} z_{2}^{j} \,.
\end{equation}

We also have a version of formal commutativity and associativity, which follows immediately from \thref{th2.16},
\prref{p-modes} and \coref{c3.13}. We use the expansions \eqref{iota14} and \eqref{iota}--\eqref{iota6}.

\begin{proposition}\label{th4.4}
Let\/ $V$ a logVA and\/ $a, b, c\in V$. Then there exists a formal series 
\[\Y_{a,b,c}(z_1,z_2,\ze_1,\ze_2,\ze_3)\in V[\![z_{1},z_{2}]\!][z_{1}^{-1}, z_{2}^{-1}, z_{12}^{-1},  \ze_{1}, \ze_{2}, \ze_3] \]
with the property that 
\begin{equation*}
\begin{split}
Y(a,z_1)Y(b,z_2)c &=\iota_{z_1,z_2}\Y_{a,b,c}(z_1,z_2,\ze_1,\ze_2,\ze_{12})\, ,\\
Y(b,z_2)Y(a,z_1)c &=(-1)^{p(a)p(b)} \iota_{z_2,z_1}\Y_{a,b,c}(z_1,z_2,\ze_1,\ze_2,\ze_{12})\, ,\\
Y(Y(a, z_3)b,z_2)c &=\iota_{z_2,z_3}\Y_{a,b,c}(z_2+z_3,z_2,\tilde{\ze}_{23},\ze_2,\ze_3)\,. 
\end{split}
\end{equation*}
Furthermore, 
\begin{equation*}
z_{12}^N \Y_{a,b,c}(z_1,z_2,\ze_1,\ze_2,\ze_3)\in V[\![z_{1},z_{2}]\!][z_{1}^{-1}, z_{2}^{-1}, \ze_{1}, \ze_{2}, \ze_3] \,,
\end{equation*}
for some\/ $N\in\ZZ_+$ that depends only on\/ $a$ and\/ $b$.
\end{proposition}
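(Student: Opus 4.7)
The plan is to derive this proposition by directly applying \thref{th2.16} to the pair of logarithmic fields $Y(a,z), Y(b,z)$ in the local space $Y_z(V) \subset \LF(V)$, taking $v = c$. The resulting formal series $\Y_{Y(a,\cdot),\, Y(b,\cdot),\, c}(z_1, z_2)$ lies in $V[\![z_1, z_2]\!][z_1^{-1}, z_2^{-1}, z_{12}^{-1}, \ze_1, \ze_2, \ze_{12}]$. I define $\Y_{a,b,c}(z_1, z_2, \ze_1, \ze_2, \ze_3)$ as this series, with the formal variable $\ze_{12}$ relabeled as $\ze_3$.

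With this definition in place, the first two identities of the proposition are exactly the statements \eqref{iota10} and \eqref{iota11} of \thref{th2.16} read off for the fields $Y(a,z)$, $Y(b,z)$ and the vector $c$. Likewise, the regularity assertion that $z_{12}^N \Y_{a,b,c}$ has no negative powers of $z_{12}$ is a direct translation of \eqref{iota13}, for some $N\in\ZZ_+$ depending only on $a,b$ (namely, any $N$ for which \eqref{logf15} holds for the pair $Y(a,z), Y(b,z)$).

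The third identity (associativity) is obtained by interpreting the OPE formula \eqref{iota12} inside the logVA. That formula expresses $\iota_{z_2,z_{12}} \Y_{Y(a,\cdot),\,Y(b,\cdot),\,c}$ as
\[ \sum_{i\in\ZZ_+,\, n\in\ZZ} \frac{(-1)^i}{i!}\, \ze_{12}^{i}\, z_{12}^{-n-1} \bigl(\NN^{i} Y(a,z)_{(n+\NN)} Y(b,z)\bigr)(z_2)\, c\,. \]
By \coref{c3.13}, $\NN^{i} Y(a,z)_{(n+\NN)} Y(b,z) = Y(\N^{i} a_{(n+\N)}b, z)$, so the sum rewrites as $\sum_{i,n} \tfrac{(-1)^i}{i!}\, \ze_{12}^i\, z_{12}^{-n-1}\, Y(\N^i a_{(n+\N)} b, z_2) c$. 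Comparing with the expansion given in \prref{p-modes} for $Y(a, z_{12})b$ and substituting into $Y(\,\cdot\,,z_2)c$, this sum equals $Y\bigl(Y(a,z_{12}) b,\, z_2\bigr) c$, once $z_{12}$ is renamed to $z_3$ and $\ze_{12}$ to $\ze_3$.

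The only non-trivial bookkeeping — and the step that requires most care — is to verify that the expansion $\iota_{z_2, z_{12}}$ defined in \eqref{iota7}, \eqref{iota8} matches $\iota_{z_2, z_3}$ applied to $\Y_{a,b,c}(z_2+z_3,z_2,\tilde{\ze}_{23},\ze_2,\ze_3)$ after the relabeling. Concretely, one must check that $\iota_{z_2, z_{12}}\ze_1$ given by \eqref{iota7} coincides, after the substitution $z_{12}\mapsto z_3$, with $\tilde{\ze}_{23}$ defined via \eqref{iota14}; this is an immediate comparison of the two explicit power series. Once the formal variables are aligned, the three identities are direct consequences of \thref{th2.16}, \prref{p-modes}, and \coref{c3.13}, as indicated in the remark preceding the statement.
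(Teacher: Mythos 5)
Your proposal is correct and follows exactly the route the paper intends: the paper gives no separate proof of Proposition \ref{th4.4} but states that it "follows immediately from Theorem \ref{th2.16}, Proposition \ref{p-modes} and Corollary \ref{c3.13}," which is precisely the chain you carry out — applying Theorem \ref{th2.16} to the pair $Y(a,z),Y(b,z)$ with $v=c$, reading off the first two identities and the regularity from \eqref{iota10}, \eqref{iota11}, \eqref{iota13}, and converting \eqref{iota12} into the associativity identity via Corollary \ref{c3.13} and Proposition \ref{p-modes}, with the check that $\iota_{z_2,z_{12}}\ze_1$ from \eqref{iota7} matches $\tilde\ze_{23}$ from \eqref{iota14}. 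Nothing is missing.
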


As in \seref{OPE}, we can use \prref{th4.4} to express the OPEs in a logVA as follows:
\begin{equation*}
Y(a,z_1)Y(b,z_2) \sim \sum_{i \in\ZZ_+, \, n\in\ZZ} \frac{(-1)^{i}}{i!}\ze_{12}^{i}z_{12}^{-n-1}Y(\N^{i}a_{(n+\N)}b,z_2)\, .
\end{equation*}
In the physics literature, this is written as
\begin{equation}\label{ope5}
Y(a,z_1)Y(b,z_2)\sim \sum_{i \in\ZZ_+, \, n\in\ZZ} \frac{(-1)^{i}\log^{i}(z_1-z_2)}{i!(z_{1}-z_{2})^{n+1}}Y(\N^{i}a_{(n+\N)}b,z_2)\, , 
\end{equation}
or after setting $z_2=0$ as
\begin{equation}\label{ope6}
Y(a,z)Y(b,0)\sim \sum_{i \in\ZZ_+, \, n\in\ZZ} \frac{(-1)^{i}\log^{i} z}{i! z^{n+1}}Y(\N^{i}a_{(n+\N)}b,0)\, . 
\end{equation}
Notice that $Y(b,0)$ does not make sense by itself but it does after it is applied to the vacuum vector $\vac$. Then
$Y(b,0)\vac=b$ and the above formula corresponds to the expansion \eqref{nprod-7} of $Y(a,z)b$.
We define the \emph{singular OPE} (or the singular part of the OPE) as the sum over the subset $\{i=0,\,n\ge0\} \cup \{i\ge1,\,n\in\ZZ\}$.

\subsection{Borcherds identity}\label{s3.5}

The formal delta function $\delta(z_1,z_2)$ defined in \eqref{logf10} plays an important role in the theory of vertex algebras (see \cite{FLM,K,FB,LL}).
We will utilize the following generalization introduced in \cite{B}:
\begin{equation}\label{deltaN}
\delta_{\N}(z_1,z_2):= \delta(z_1,z_2) e^{(\ze_{2}-\ze_1)\N} = \sum_{n\in\ZZ} z_{1}^{-n-1-\N} z_{2}^{n+\N}\,,
\end{equation}
which is a linear operator 
\begin{equation*}
\de_\N(z_1,z_2) \colon V\otimes V \to \CC[\![z_1^{\pm1},z_2^{\pm1}]\!] \otimes (V\otimes V)[\ze_1,\ze_2] \,.
\end{equation*}
Notice that
\begin{equation}\label{deltaN1}
(D_{z_1}+D_{z_2}) \de_\N(z_1,z_2) = 0 \,.
\end{equation}
We will not list all other properties of $\de_\N$ but only one that is needed here.

\begin{lemma}\label{ldeN} 
For any two elements $a,b$ in a logVA, one has
\begin{equation*}
Y(z_1)(a\otimes b) \delta(z_1,z_2)= Y(z_2)\delta_{\N}(z_1,z_2) (a\otimes b)\, .
\end{equation*}
\end{lemma}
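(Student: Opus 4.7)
The plan is to expand both sides using the mode expansion \eqref{nprod-7}, factor out the $\ze$-dependence via the identity $Y(z)=X(z)e^{-\ze\N}$ from \eqref{logva3}, and then reduce the identity to the standard delta-function relation $z_1^{-n-1}\de(z_1,z_2)=z_2^{-n-1}\de(z_1,z_2)$.

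More concretely, first I would rewrite
\[
\de_\N(z_1,z_2)(a\otimes b)=\de(z_1,z_2)\,e^{(\ze_2-\ze_1)\N}(a\otimes b),
\]
using the factorization $e^{(\ze_2-\ze_1)\N}=e^{\ze_2\N}e^{-\ze_1\N}$ valid on $V\otimes V$. Applying $Y(z_2)$, which acts $\CC[\![z_1^{\pm1},z_2^{\pm1}]\!][\ze_1,\ze_2]$-linearly on the $V\otimes V$-valued part, and using the formula $Y(z_2)(c)=\sum_{k\in\ZZ}z_2^{-k-1}\mu_{(k)}e^{-\ze_2\N}(c)$ together with the fact that $e^{-\ze_2\N}$ commutes with $e^{\ze_2\N}$, I get
\[
Y(z_2)\de_\N(z_1,z_2)(a\otimes b)=\de(z_1,z_2)\sum_{k\in\ZZ}z_2^{-k-1}\mu_{(k)}\bigl(e^{-\ze_1\N}(a\otimes b)\bigr).
\]

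On the other hand, the left-hand side expands, again by \eqref{nprod-7}, as
\[
Y(z_1)(a\otimes b)\,\de(z_1,z_2)=\sum_{n\in\ZZ}z_1^{-n-1}\mu_{(n)}\bigl(e^{-\ze_1\N}(a\otimes b)\bigr)\,\de(z_1,z_2).
\]
Now I would invoke the standard formal distribution identity $z_1^{-n-1}\de(z_1,z_2)=z_2^{-n-1}\de(z_1,z_2)$ (which follows directly from \eqref{logf10}) term-by-term in $n$, and note that $e^{-\ze_1\N}(a\otimes b)$ is a finite polynomial in $\ze_1$ because $\N$ is locally nilpotent, so no convergence issue arises. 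This yields exactly the expression derived above for the right-hand side, proving the identity.

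The computation is essentially routine once one unwinds the conventions, so there is no real obstacle. The only point that requires care is the bookkeeping of $\ze_2$: the variable $\ze_2$ appearing in $\de_\N(z_1,z_2)$ must be merged with the $\ze_2$ generated by the action of $Y(z_2)$, which is handled precisely by the commutation of $e^{\pm\ze_2\N}$ with operators on $V\otimes V$ and by the identity $Y(z_2)e^{\ze_2\N}=X(z_2)$ from \eqref{logva3}.
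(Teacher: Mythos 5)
Your proof is correct and follows essentially the same route as the paper: both arguments reduce the statement to the factorization $Y(z)=X(z)e^{-\ze\N}$ from \eqref{logva3} together with the standard non-logarithmic delta-function identity $X(z_1)\,c\;\de(z_1,z_2)=X(z_2)\,c\;\de(z_1,z_2)$ (which you unwind term-by-term as $z_1^{-n-1}\de=z_2^{-n-1}\de$), with local nilpotency of $\N$ ensuring all sums in $\ze$ are finite. The paper simply writes the same chain of equalities starting from the left-hand side rather than the right.
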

\begin{proof}
Indeed, from \eqref{logva3} and $X(z_1)\delta(z_1,z_2) = X(z_2)\delta(z_1,z_2)$, we have
\begin{align*}
Y(z_1)(a\otimes b) \delta(z_1,z_2)
&= X(z_1) e^{-\ze_1\N} (a\otimes b) \delta(z_1,z_2) \\
&= X(z_2) e^{-\ze_1\N} (a\otimes b) \delta(z_1,z_2) \\
&= Y(z_2) e^{\ze_2\N} e^{-\ze_1\N} (a\otimes b) \delta(z_1,z_2) \,,
\end{align*}
as claimed.
\end{proof}

Now we can state a version of the Borcherds identity for logarithmic vertex algebras,
which is one of the main results of the paper.

\begin{theorem}[Borcherds identity] \label{t3.20} 
For every three elements\/ $a,b,c$ in a logVA\/ $V$ and\/ $n\in\ZZ$, we have the following identity
\begin{equation}\label{borcherds}
\begin{split}
\iota_{z_1,z_2} & Y(z_1)(I\otimes Y(z_2))z_{12}^n e^{\vartheta_{12}\N_{12}} (a\otimes b \otimes c) \\
&-(-1)^{p(a)p(b)}\iota_{z_2,z_1}Y(z_2)(I\otimes Y(z_1))z_{12}^n e^{\vartheta_{21}\N_{12}} (b\otimes a \otimes c) \\
&=\sum_{j\in\ZZ_{+} }Y(z_2)(\mu_{(n+j)}\otimes I)D_{z_{2}}^{(j)}\delta_{\N_{13}}(z_{1},z_{2}) (a\otimes b\otimes c)\, .
\end{split}
\end{equation}
\end{theorem}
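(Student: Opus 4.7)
The plan is to adapt the classical delta-function proof of Borcherds' identity for ordinary vertex algebras (as in \cite{K, FB, LL}) to the logarithmic setting, with the logarithmic delta function $\delta_{\N_{13}}$ from \eqref{deltaN} replacing the ordinary $\delta$. The main ingredients to combine are locality \eqref{logva5}, the formal commutativity/associativity of \prref{th4.4}, the hexagon axiom \eqref{hexagonb}, and the $(n+\N)$-th product formula \eqref{eq-n-prod} from \coref{cor-n-prod}.

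First I would apply \prref{th4.4} to present the two terms on the LHS of \eqref{borcherds} as $\iota_{z_1,z_2}$- and $\iota_{z_2,z_1}$-expansions of a single formal series $z_{12}^n \Y_{a,b,c}$, noting that $z_{12}^N \Y_{a,b,c}$ is free of negative powers of $z_{12}$ for $N\gg 0$ by locality. Second I would establish a logarithmic partial-fractions identity of the shape
\begin{equation*}
\iota_{z_1,z_2}\bigl(z_{12}^{-m-1} e^{\vartheta_{12}\N_{13}}\bigr) - \iota_{z_2,z_1}\bigl(z_{12}^{-m-1} e^{\vartheta_{21}\N_{13}}\bigr) = \sum_{0\le j\le m} p_j(\N_{13})\, D_{z_2}^{(j)}\delta_{\N_{13}}(z_1,z_2),
\end{equation*}
with polynomial coefficients $p_j$, by direct computation from \eqref{iota}--\eqref{iota8} and \eqref{deltaN}; the local nilpotency of $\N_{13}$ guarantees that all $\zeta$-sums terminate. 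Third, using the hexagon axiom \eqref{hexagonb} to pass the $\N_{12}$-factor on the LHS of \eqref{borcherds} through $Y(z_1)\otimes I$, the operator $\N_{12}$ (acting on $a\otimes b$) converts into $\N_{13}+\N_{23}$; the $\N_{23}$-part is absorbed into the action of $Y(z_2)$ on the second and third tensor factors, while the $\N_{13}$-part combines with the partial-fractions identity above to produce the factor $\delta_{\N_{13}}$. This rewrites the LHS of \eqref{borcherds} as a finite sum $\sum_{j\ge 0} h_j(z_2)\, D_{z_2}^{(j)}\delta_{\N_{13}}(z_1,z_2)(a\otimes b\otimes c)$, and I would identify $h_j(z_2)=Y(z_2)(\mu_{(n+j)}\otimes I)$ by taking the coefficient of a fixed power of $z_1$ on both sides and matching with \eqref{eq-n-prod}.

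The main obstacle is the bookkeeping between the three $\N_{ij}$ operators and their interaction with the expansion maps $\iota_{z_1,z_2}$ and $\iota_{z_2,z_1}$. The structural subtlety is that on the LHS of \eqref{borcherds} the operator $\N_{12}$ acts on $a\otimes b$ \emph{before} the field operators, whereas on the RHS the operator $\N_{13}$ appears inside $\delta_{\N_{13}}$ acting on $a\otimes c$; the hexagon axiom is precisely the bridge that makes this conversion possible, but tracking the shift of the $\N_{23}$-contribution through $Y(z_2)$ (and absorbing it into $\mu_{(n+j)}$ via the fact that $\mu_{(n+j)}$ itself is built from $\N_{12}$) is the delicate combinatorial step. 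The simultaneous commutativity $[\N_{12},\N_{13}]=[\N_{12},\N_{23}]=[\N_{13},\N_{23}]=0$ from \deref{lm}\eqref{2.4-3} is indispensable for freely rearranging the exponentials $e^{\vartheta_{ij}\N_{k\ell}}$, and the local nilpotency of $\N$ ensures that every sum in the argument is finite once evaluated on a fixed triple $a,b,c\in V$.
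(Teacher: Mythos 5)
Your overall template (write the difference of the two terms as the two $\iota$-expansions of one series, apply Leibniz, then identify the coefficients via \eqref{eq-n-prod}) matches the paper's proof, but the step where $\de_{\N_{13}}$ is supposed to appear --- your ``logarithmic partial-fractions identity'' --- is where the argument breaks, and it cannot be repaired in the form you state. The difference $\iota_{z_1,z_2}\bigl(z_{12}^{-m-1}e^{\vartheta_{12}\N}\bigr)-\iota_{z_2,z_1}\bigl(z_{12}^{-m-1}e^{\vartheta_{21}\N}\bigr)$ expands into monomials of the form $z_1^{-m-1-j+\N}z_2^{j}$ and $z_1^{j}z_2^{-m-1-j+\N}$ (see the binomial expansions displayed just before \eqref{borcherds2}), whereas $D_{z_2}^{(j)}\de_{\N}(z_1,z_2)$ is built from monomials $z_1^{-k-1-\N}z_2^{k-j+\N}$: the $\N$-corrections sit on different variables with different signs, so no polynomial coefficients $p_j(\N)$ can reconcile the two sides (already at first order in a nilpotent scalar $\N$ and $m=0$, the coefficient of $\ze_1$ on your left-hand side is supported on non-negative powers of $z_2$ only, while on the right it is a multiple of the full $\de(z_1,z_2)$). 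Relatedly, your use of the hexagon axiom is off: \eqref{hexagonb} relates $\N$ acting \emph{after} $Y(z)\otimes I$ to $\N_{13}+\N_{23}$ acting on the input; it does not convert the operator $\N_{12}$ that acts on $a\otimes b\otimes c$ \emph{before} any fields are applied in the left-hand side of \eqref{borcherds} into $\N_{13}+\N_{23}$.

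The mechanism that actually produces $\de_{\N_{13}}$ is different and is \leref{ldeN} in composite form. Set $F(z_1,z_2)=Y(z_1)(I\otimes Y(z_2))z_{12}^{N}e^{\vartheta_{12}\N_{12}}$ for $N\gg0$. By locality, the two terms on the left of \eqref{borcherds} are the $\iota_{z_1,z_2}$- and $\iota_{z_2,z_1}$-expansions of $z_{12}^{n-N}$ applied to the single series $F(z_1,z_2)(a\otimes b\otimes c)$, so their difference equals $F(z_1,z_2)(a\otimes b\otimes c)\,\partial_{z_2}^{(N-n-1)}\de(z_1,z_2)$. The key point is that $F(z_1,z_2)e^{\ze_1\N_{13}}(a\otimes b\otimes c)$ is \emph{independent of $\ze_1$} and lies in $V(\!(z_2)\!)[\ze_2](\!(z_1)\!)$: by \eqref{logva3} and the hexagon axiom in the form $\N(I\otimes Y(z_2))=(I\otimes Y(z_2))(\N_{12}+\N_{13})$ one gets $F(z_1,z_2)=X(z_1)(I\otimes Y(z_2))z_{12}^{N}e^{(\vartheta_{12}-\ze_1)\N_{12}}e^{-\ze_1\N_{13}}$, and $\vartheta_{12}-\ze_1$ contains no $\ze_1$. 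Multiplying a genuine Laurent series in $z_1$ by $\de(z_1,z_2)$ then permits the substitution $z_1=z_2$, $\ze_1=\ze_2$, and the inserted $e^{\ze_1\N_{13}}e^{-\ze_1\N_{13}}$ becomes $e^{(\ze_2-\ze_1)\N_{13}}$ --- this is exactly where $\de_{\N_{13}}$ comes from. After that, the Leibniz rule and \eqref{eq-n-prod} finish the proof as you anticipated. So the skeleton of your argument is right, but the central identity you would need in your second step is false, and the correct replacement rests on the $\ze_1$-independence of $F(z_1,z_2)e^{\ze_1\N_{13}}$ rather than on any identity among the symbols $z_{12}^{m+\N}$.
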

\begin{proof} 
The proof is similar to that of \cite[Theorem 5.2]{B}. 
For a sufficiently large $N$, let us denote
\begin{equation*}
F(z_1,z_2) := Y(z_1)(I\otimes Y(z_2))z_{12}^N e^{\vartheta_{12}\N_{12}} \,,
\end{equation*}
which implicitly also depends on $\ze_1$ and $\ze_2$. It follows from locality \eqref{logva5} and \eqref{logva3} that
\begin{equation*}
F(z_1,z_2) e^{\ze_{1}\N_{13}} (a\otimes b\otimes c) \in V(\!(z_{2})\!)[\ze_{2}](\!(z_{1})\!)\, . 
\end{equation*}
Hence, as in the proof of \leref{ldeN}, we have
\begin{equation*}
F(z_1,z_2)(a\otimes b\otimes c) \de(z_1,z_2)  = F(z_2,z_2) \delta_{\N_{13}}(z_1,z_2) (a\otimes b\otimes c) \,. 
\end{equation*}
Next, recall that
\begin{equation*}
\de(z_1,z_2) = (\iota_{z_1,z_2}-\iota_{z_2,z_1}) z_{12}^{-1} \,,
\end{equation*}
which implies
\begin{equation*}
\partial_{z_2}^{(n)} \de(z_1,z_2) = (\iota_{z_1,z_2}-\iota_{z_2,z_1}) z_{12}^{-n-1} \,, 
\qquad n\in\ZZ \,.
\end{equation*}
Then using locality \eqref{logva5} again, we find that the left-hand side of \eqref{borcherds} is equal to:
\begin{equation*}
F(z_1,z_2)(a\otimes b\otimes c) (\iota_{z_1,z_2}-\iota_{z_2,z_1}) z_{12}^{n-N}
=F(z_1,z_2)(a\otimes b\otimes c) \partial^{(N-n-1)}_{z_2}\delta(z_1,z_2)\, .
\end{equation*}
Applying the Leibniz rule, we obtain:
\begin{align*}
F&(z_1,z_2)(a\otimes b\otimes c) \partial^{(N-n-1)}_{z_2} \de(z_1,z_2) \\
&= D^{(N-n-1)}_{z_2} \bigl( F(z_1,z_3) (a\otimes b\otimes c) \de(z_1,z_2) \bigr)\big|_{z_3=z_2} \\
&= D^{(N-n-1)}_{z_2} \bigl( F(z_2,z_3) \de_{\N_{13}}(z_1,z_2) (a\otimes b\otimes c) \bigr)\big|_{z_3=z_2} \\
&= \sum_{j\ge0} \bigl( D_{z_1}^{(N-n-1-j)} F(z_1,z_2) \bigr)\big|_{z_1=z_2} D_{z_{2}}^{(j)} \de_{\N_{13}}(z_1,z_2) (a\otimes b\otimes c) \,.
\end{align*}
Now to finish the proof, we observe that
\begin{equation*}
D_{z_1}^{(N-n-1-j)} F(z_1,z_2) \big|_{z_1=z_2} = Y(z_2)(\mu_{(n+j)}\otimes I) \,,
\end{equation*}
by the $(n+\N)$-th product identity \eqref{eq-n-prod}.
\end{proof}

\begin{remark}\label{rembor}
Since $\N$ is locally nilpotent, the coefficients of $\delta_{\N_{13}}(z_{1},z_{2}) (a\otimes b\otimes c)$ span a finite-dimensional subspace of $V^{\otimes 3}$.
Hence, there exists $N\in\ZZ_+$ such that $\mu_{(N+j)}\otimes I$ vanishes on this subspace for all $j\ge0$, and the Borcherds identity \eqref{borcherds}
implies the locality \eqref{logva5}, with $N$ possibly depending on not only $a,b$ but $c$ as well. 

Suppose that $V$ has the following property:
for every $a,b\in V$, there is $N\in\ZZ_+$ such that
\begin{equation*}
\bigl(\phi_{i_1} \cdots \phi_{i_k} a\bigr)_{(n+\N)}b = 0
\quad\text{for all}\quad n\ge N \,, \; k\ge 0 \,, \; 1\le i_1,\dots,i_k\le L \,,
\end{equation*}
where the components $\phi_i$ of $\N$ are given by \eqref{logf-n}. Then the Borcherds identity \eqref{borcherds}
implies the locality \eqref{logva5} with $N$ depending only on $a,b$.
\end{remark}

\begin{corollary}\label{corbor}
If\/ $a\in V^0$, i.e., $\N(a\otimes v)=0$ for all\/ $v\in V$, then we have the usual \emph{Borcherds identity}
for every\/ $b\in V$, $n\in\ZZ{:}$
\begin{align*}
\iota_{z_1,z_2} & Y(a,z_1)Y(b,z_2) z_{12}^n
-(-1)^{p(a)p(b)}\iota_{z_2,z_1}Y(b,z_2) Y(a,z_1)z_{12}^n \\
&=\sum_{j\in\ZZ_{+} }Y(a_{(n+j+\N)}b, z_2) \partial_{z_{2}}^{(j)}\delta(z_{1},z_{2}) \,,
\end{align*}
and, in particular, the usual \emph{commutator formula}
\begin{equation*}
[Y(a,z_1),Y(b,z_2)] =\sum_{j\in\ZZ_{+} }Y(a_{(j+\N)}b, z_2) \partial_{z_{2}}^{(j)}\delta(z_{1},z_{2}) \,.
\end{equation*}
\end{corollary}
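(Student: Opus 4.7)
The plan is to derive both assertions directly from the logarithmic Borcherds identity \eqref{borcherds} in Theorem \ref{t3.20}, by showing that the hypothesis $\N(a\otimes v)=0$ (for all $v\in V$) forces every $\N$-decorated factor appearing in that identity to collapse to its ordinary counterpart. No new computation is really needed beyond tracking the action of $\N$ on tensor positions.

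First I would observe that the symmetry $\N P = P \N$ from Definition \ref{lm}\eqref{2.4-2} upgrades the hypothesis to $\N(v\otimes a)=0$ for all $v\in V$ as well. Consequently, writing $\N = \sum \phi_i\otimes \psi_i$ as in \eqref{logf-n} and using \eqref{logf-1}, I would verify the three vanishing statements
\begin{equation*}
\N_{12}(a\otimes b\otimes c) = 0, \qquad \N_{12}(b\otimes a\otimes c) = 0, \qquad \N_{13}(a\otimes b\otimes c) = 0;
\end{equation*}
the first two are immediate from $\N(a\otimes b) = \N(b\otimes a) = 0$, while the third follows from the identity $\N_{13} = (I\otimes P)\N_{12}(I\otimes P)$ together with $\N(a\otimes c) = 0$. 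Hence the exponentials $e^{\vartheta_{12}\N_{12}}$ and $e^{\vartheta_{21}\N_{12}}$ in \eqref{borcherds} act as the identity on $a\otimes b\otimes c$ and $b\otimes a\otimes c$ respectively, and by \eqref{deltaN} the operator $\delta_{\N_{13}}(z_1,z_2)$ reduces on $a\otimes b\otimes c$ to the ordinary delta function $\delta(z_1,z_2)$ (a scalar independent of $\ze_1,\ze_2$).

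Substituting these simplifications into \eqref{borcherds}, and using the notation \eqref{logva0} so that $Y(z_1)(I\otimes Y(z_2))(a\otimes b\otimes c) = Y(a,z_1)Y(b,z_2)c$ and $(\mu_{(n+j)}\otimes I)(a\otimes b\otimes c) = a_{(n+j+\N)}b\otimes c$, I would obtain
\begin{align*}
\iota_{z_1,z_2}Y(a,z_1)Y(b,z_2)z_{12}^n c &- (-1)^{p(a)p(b)}\iota_{z_2,z_1}Y(b,z_2)Y(a,z_1)z_{12}^n c \\
&= \sum_{j\in\ZZ_+} Y(a_{(n+j+\N)}b,z_2)c\,\partial_{z_2}^{(j)}\delta(z_1,z_2),
\end{align*}
where $D_{z_2}$ collapses to $\partial_{z_2}$ on the $\ze$-independent delta function. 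Since this holds for all $c\in V$, the Borcherds identity follows as an identity of End$(V)$-valued formal series. For the commutator formula, I would then specialize $n=0$: the factor $z_{12}^0=1$ trivializes both $\iota$-expansions, so the left-hand side becomes the supercommutator $[Y(a,z_1),Y(b,z_2)]$.

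There is no real obstacle here; the only mild subtlety is keeping the tensor positions of $\N_{12}$ versus $\N_{13}$ straight, and handling the sign conventions in the super case when verifying $\N_{13}(a\otimes b\otimes c)=0$ via the transposition formula.
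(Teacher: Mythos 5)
Your proof is correct and is exactly the intended argument: the paper states this corollary without proof as an immediate specialization of Theorem \ref{t3.20}, and your verification that $\N_{12}$ kills $a\otimes b\otimes c$ and $b\otimes a\otimes c$ (via symmetry of $\N$) and that $\N_{13}$ kills $a\otimes b\otimes c$ (via conjugation by $I\otimes P$), so that the exponentials and $\delta_{\N_{13}}$ collapse to their ordinary counterparts, fills in precisely the details the authors leave implicit.
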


We can express the Borcherds identity \eqref{borcherds} in terms of the product $\mu_{(n)}$ defined in \eqref{eq3.7a}. 
To do so, notice that  by the definition of $\iota_{z_1,z_2}$ and $\iota_{z_2,z_1}$ (see \eqref{iota}--\eqref{iota6}), 
we have for $n\in\ZZ$:
\begin{align*}
\iota_{z_1,z_2} z_{12}^{n+\N} = \iota_{z_1,z_2} z_{12}^{n} e^{\vartheta_{12}\N}
&=\sum_{j\in\ZZ_{+}}\binom{n+\N}{j} (-1)^j z_{1}^{n-j+\N}z_{2}^{j}\, ,\\
\iota_{z_2,z_1} z_{21}^{n+\N} = \iota_{z_2,z_1} z_{12}^{n} e^{\vartheta_{21}\N}
&=\sum_{j\in\ZZ_{+}}\binom{n+\N}{j} (-1)^{n+j}  z_{1}^{j}z_{2}^{n-j+\N}\, ,
\end{align*}
where
\[\binom{n+\N}{j}=\frac1{j!} (n+\N)(n+\N-1)\cdots (n+\N-j+1)\, .\]
Using these expansions and the mode expansion \eqref{nprod-7} of logarithmic fields, we see that for any fixed $n\in\ZZ$,
equation \eqref{borcherds} is equivalent to the following collection of identities on $V^{\otimes 3}$  for all $k,m\in\ZZ$:
\begin{equation}\label{borcherds2}
\begin{split}
\sum_{j\in \ZZ_+} &(-1)^{j}\mu_{(m+n-j)}(I\otimes \mu_{(k+j)})\binom{n+\N_{12}}{j}\\
& -\sum_{j\in \ZZ_+}(-1)^{n+j}\mu_{(n+k-j)}(I\otimes \mu_{(m+j)})\binom{n+\N_{12}}{j} (P \otimes I) \\
& =\sum_{j\in \ZZ_+} \mu_{(m+k-j)}( \mu_{(n+j)}\otimes I) \binom{m+\N_{13}}{j}\, .
\end{split}
\end{equation}

\begin{corollary}\label{corbor2}
If\/ $a,b\in V$ are such that\/ $\N(a\otimes b)=0$, then for all\/ $m,k\in\ZZ$ and\/ $v\in V$, we have the \emph{commutator formula}
\begin{equation*}
\bigl[a_{(m+\N)},b_{(k+\N)}\bigr]v =\sum_{j\in\ZZ_{+} }\mu_{(m+k-j)}( \mu_{(j)}\otimes I) \binom{m+\N_{13}}{j} (a\otimes b\otimes v) \,.
\end{equation*}
\end{corollary}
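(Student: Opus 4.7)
The plan is to deduce this directly from the Borcherds identity in the form \eqref{borcherds2} by specializing $n=0$ and exploiting the hypothesis $\N(a\otimes b)=0$ together with the symmetry of $\N$ to collapse both sums on the left-hand side to their $j=0$ terms.

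First, I would apply \eqref{borcherds2} with $n=0$ to the vector $a\otimes b\otimes v\in V^{\otimes 3}$. The right-hand side then reads
\begin{equation*}
\sum_{j\in\ZZ_+}\mu_{(m+k-j)}(\mu_{(j)}\otimes I)\binom{m+\N_{13}}{j}(a\otimes b\otimes v),
\end{equation*}
which is exactly the right-hand side of the claim. For the left-hand side, observe that since $n=0$ we have $\binom{\N_{12}}{j}=\frac{1}{j!}\N_{12}(\N_{12}-1)\cdots(\N_{12}-j+1)$, so every $j\ge 1$ term carries a factor of $\N_{12}$.

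Next, I would show that $\N_{12}$ annihilates both $a\otimes b\otimes v$ and $(P\otimes I)(a\otimes b\otimes v)$. The first is immediate from the hypothesis $\N(a\otimes b)=0$. For the second, the symmetry axiom $\N P=P\N$ of \deref{lm} applied to $a\otimes b$ yields $\N(P(a\otimes b))=P(\N(a\otimes b))=0$, hence $\N(b\otimes a)=0$ (after dividing by the overall sign $(-1)^{p(a)p(b)}$ from \eqref{logf1}). Consequently, in \eqref{borcherds2} only the $j=0$ terms survive on the left, giving
\begin{equation*}
\mu_{(m)}(I\otimes\mu_{(k)})(a\otimes b\otimes v)-\mu_{(k)}(I\otimes\mu_{(m)})(P\otimes I)(a\otimes b\otimes v).
\end{equation*}

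Finally, I would unwind the notation: the first term equals $a_{(m+\N)}(b_{(k+\N)}v)$ by definition of $\mu_{(n)}$ in \eqref{eq3.7a}, while the second equals $(-1)^{p(a)p(b)}\,b_{(k+\N)}(a_{(m+\N)}v)$, using $P(a\otimes b)=(-1)^{p(a)p(b)}b\otimes a$. Their difference is precisely the supercommutator $[a_{(m+\N)},b_{(k+\N)}]v$, completing the derivation. The only subtle point—hence the main place to be careful—is tracking the signs arising from the transposition $P$ in the fermionic case and ensuring that the symmetry of $\N$ is used in its graded form so that $\N(b\otimes a)=0$ really follows from $\N(a\otimes b)=0$; once this is in place, the remainder is bookkeeping.
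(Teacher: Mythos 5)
Your proposal is correct and is essentially the paper's proof: the authors simply set $n=0$ in \eqref{borcherds2} and apply both sides to $a\otimes b\otimes v$, and your argument fills in exactly the details they leave implicit (the vanishing of the $j\ge 1$ terms via $\N_{12}(a\otimes b\otimes v)=0$ and, through the symmetry $\N P=P\N$, of $\N_{12}(P\otimes I)(a\otimes b\otimes v)$, plus the sign bookkeeping that turns the surviving $j=0$ terms into the supercommutator). No gaps.
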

\begin{proof}
Set $n=0$ in \eqref{borcherds2} and apply both sides to $a\otimes b\otimes v$.
\end{proof}

The Borcherds identity from \cite{K} is the special case $\N=0$ of \eqref{borcherds2}. 
We also have the following logarithmic version of the Jacobi identity from \cite{FLM}.

\begin{proposition}
\label{pro3.21} 
In any logVA\/ $V$, we have the following identity on\/ $V^{\otimes 3}{:}$
\begin{align*}
\iota_{z_1,z_2} &Y(z_1)(I\otimes Y(z_2))\delta_{\N_{12}}(z_{3},z_{12}) \\
&-\iota_{z_2,z_1}Y(z_2)(I\otimes Y(z_1))\delta_{\N_{12}}(z_{3},z_{12})(P\otimes I)\\
&=\iota_{z_1,z_3}Y(z_2)(Y(z_3)\otimes I)\delta_{\N_{13}}(z_{13},z_{2}) \, . 
\end{align*}
\end{proposition}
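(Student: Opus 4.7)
The plan is to derive the Jacobi identity from the Borcherds identity \eqref{borcherds} by a generating-function argument in a fourth formal variable $z_3$ (with companion $\ze_3$). I will apply \eqref{borcherds} to the element $e^{-\ze_3\N_{12}}(a\otimes b\otimes c)\in V^{\otimes 3}[\ze_3]$ in place of $a\otimes b\otimes c$ — a finite sum, by local nilpotency of $\N$ — then multiply both sides by $z_3^{-n-1}$ and sum over $n\in\ZZ$. On the LHS, the first term will become
\[
\iota_{z_1,z_2}Y(z_1)(I\otimes Y(z_2))\sum_{n\in\ZZ}z_3^{-n-1}z_{12}^n e^{(\vartheta_{12}-\ze_3)\N_{12}}(a\otimes b\otimes c),
\]
and I recognize the inner sum as the expansion $\iota_{z_1,z_2}\delta_{\N_{12}}(z_3,z_{12})$ via \eqref{deltaN}. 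For the second term, $e^{-\ze_3\N_{12}}$ commutes with $P\otimes I$ by the symmetry $\N P=P\N$, so the same identification yields $\iota_{z_2,z_1}Y(z_2)(I\otimes Y(z_1))\delta_{\N_{12}}(z_3,z_{12})(P\otimes I)(a\otimes b\otimes c)$.

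For the RHS of \eqref{borcherds}, I will substitute $m=n+j$ and use $\sum_{j\ge0}z_3^j D_{z_2}^{(j)}=e^{z_3 D_{z_2}}$ to obtain
\[
\sum_{m\in\ZZ} z_3^{-m-1}\,Y(z_2)(\mu_{(m)}\otimes I)\,e^{z_3 D_{z_2}}\delta_{\N_{13}}(z_1,z_2)\,e^{-\ze_3\N_{12}}(a\otimes b\otimes c).
\]
Relation \eqref{deltaN1} converts the Taylor shift in $z_2$ into one in $z_1$: $e^{z_3 D_{z_2}}\delta_{\N_{13}}=e^{-z_3 D_{z_1}}\delta_{\N_{13}}$. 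I will then identify $e^{-z_3 D_{z_1}}\delta_{\N_{13}}(z_1,z_2)=\iota_{z_1,z_3}\delta_{\N_{13}}(z_{13},z_2)$, pull $e^{-\ze_3\N_{12}}$ through $\delta_{\N_{13}}$ (legal since $\N_{12}$ and $\N_{13}$ commute), and recognize $\sum_m z_3^{-m-1}(\mu_{(m)}\otimes I)e^{-\ze_3\N_{12}}=(Y(z_3)\otimes I)$ via the mode expansion \eqref{nprod-7}. This produces exactly the RHS of Jacobi.

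The main obstacle will be the Taylor-shift identification $e^{-z_3 D_{z_1}}\delta_{\N_{13}}(z_1,z_2)=\iota_{z_1,z_3}\delta_{\N_{13}}(z_{13},z_2)$: the factor $z_1^{-k-1-\N_{13}}$ hides both a Laurent part $z_1^{-k-1}$ and a logarithmic part $e^{-\ze_1\N_{13}}$, so one must verify that the derivation $D_{z_1}=\partial_{z_1}+z_1^{-1}\partial_{\ze_1}$ implements the formal substitution $z_1\mapsto z_1-z_3$, $\ze_1\mapsto\iota_{z_1,z_3}\vartheta_{13}$ on the entire expression. This reduces to the elementary computation $D_{z_1}^j\ze_1=(-1)^{j-1}(j-1)!\,z_1^{-j}$ for $j\ge1$, whose exponentiation precisely reproduces the expansion of $\log(z_1-z_3)$ in the domain $|z_1|>|z_3|$. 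All remaining manipulations are formal bookkeeping built on the commutation and symmetry properties of the braiding map $\N$ established in Section \ref{s3.2}.
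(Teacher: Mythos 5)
Your proposal is correct and is essentially the paper's own argument run in the opposite direction: the paper extracts each Borcherds identity from the Jacobi identity by pairing against $z_3^{n+\N_{12}}$ and taking $\Res_{z_3}$, while you assemble the Jacobi identity by applying Borcherds to $e^{-\ze_3\N_{12}}(a\otimes b\otimes c)$ and summing against $z_3^{-n-1}$ — the same residue duality, using the identical ingredients ($\delta_{\N}$ as a generating function, $(D_{z_1}+D_{z_2})\delta_\N=0$, the Taylor shift $e^{-z_3D_{z_1}}$, the commutation $[\N_{12},\N_{13}]=0$, and $Y(z_3)=X(z_3)e^{-\ze_3\N}$). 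The Taylor-shift verification you flag is exactly the mechanism of Lemma \ref{l3.15} and is handled the same way in the paper's proof.
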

\begin{proof}
After it is applied to $a\otimes b\otimes c$, 
this identity is equivalent to the collection of Borcherds identities \eqref{borcherds} for all $n\in\ZZ$. 
Indeed, we can express each of the terms in \eqref{borcherds} as a residue:
\begin{align*}
\iota_{z_1,z_2} & Y(z_1)(I\otimes Y(z_2))z_{12}^{n} e^{\vartheta_{12}\N_{12}} (a\otimes b \otimes c) \\
&=\Res_{z_{3}}\iota_{z_1,z_2}\Bigl(Y(z_1)(I\otimes Y(z_2)) z_{3}^{n}\delta(z_{3},z_{12}) e^{\vartheta_{12}\N_{12}} (a\otimes b\otimes c) \Bigr)\\
&=\Res_{z_{3}}\iota_{z_1,z_2}\Bigl(Y(z_1)(I\otimes Y(z_2))z_3^{n+\N_{12}}\delta_{\N_{12}}(z_{3},z_{12}) (a\otimes b\otimes c) \Bigr) ;
\end{align*}
similarly, 
\begin{align*}
\iota_{z_2,z_1} &Y(z_2)(I\otimes Y(z_1))z_{12}^n e^{\vartheta_{21}\N_{12}}(b\otimes a)\otimes c\\
& =\Res_{z_{3}}\iota_{z_2,z_1}\Bigl(Y(z_2)(I\otimes Y(z_1))z_3^{n+\N_{12}}\delta_{\N_{12}}(z_{3},z_{12})(b\otimes a \otimes c)\Bigr)
\end{align*}
and 
\begin{align*}
&\sum_{j\geq 0}Y(z_2)(\mu_{(n+j)}\otimes I)D_{z_{2}}^{(j)}\delta_{\N_{13}}(z_{1},z_{2}) (a\otimes b\otimes c)\\
&\qquad=\sum_{j\geq 0}Y(z_2)(\mu_{(n+j)}\otimes I)(-1)^{j}D_{z_{1}}^{(j)}\delta_{\N_{13}}(z_{1},z_{2}) (a\otimes b\otimes c)\\
&\qquad=\Res_{z_3}\Bigl(Y(z_2)(X(z_3)\otimes I)z_3^{n}e^{-z_3D_{z_1}}\delta_{\N_{13}}(z_{1},z_{2}) (a\otimes b\otimes c) \Bigr)\\
&\qquad=\Res_{z_{3}}\iota_{z_1,z_3}\Bigl(Y(z_2)(Y(z_3)\otimes I)z_3^{n+\N_{12}}\delta_{\N_{13}}(z_{13},z_{2})(a\otimes b\otimes c)\Bigr).
\end{align*}
This completes the proof.
\end{proof}

\subsection{Relation to twisted logarithmic modules}\label{ex4.3}
Let $U$ be an ordinary vertex algebra with a locally-nilpotent derivation $\mathcal{N}\in \Der(U)$. Then $\varphi=e^{-2\pi\ii \mathcal{N}}$ is an automorphism of $U$,
which is not semisimple unless $\mathcal{N}=0$. Consider a \emph{$\varphi$-twisted module} $W$ in the sense of \cite{B}.

We will introduce a logVA structure on the vector space direct sum $V=U\oplus W$, so that it becomes an \emph{abelian extension} of $U$ by $W$.
Namely, $Y(a,z)b$ is defined as in $U$ for $a,b\in U$, and $Y(a,z)b=0$ for $a,b\in W$. For $a\in U$ and $b\in W$, we define $Y(a,z)b$ using the $U$-action on $W$,
while $Y(b,z)a$ is determined from the skew-symmetry relation (see \prref{pro3.16}).
The braiding map on $V$ is given by
\begin{equation*}
\N = (\mathcal{N} \, \pi_U) \otimes \pi_W + \pi_W \otimes (\mathcal{N} \, \pi_U) \in \End(V)\otimes\End (V)\, ,
\end{equation*}
where $\pi_U\colon V\to U$ and $\pi_W\colon V\to W$ denote the projections.
Then $V$ is a logVA. Notice that, by construction,
\begin{equation*}
\N(a \otimes b) = 0 \,, \quad \N(a \otimes v) = (\mathcal{N} a) \otimes v \,, \qquad a,b\in U \,, \; v\in W\,.
\end{equation*}
Hence, the mode expansion \eqref{nprod-7} reduces to the following formula from \cite[(5.8)]{B}:
\begin{equation*}
Y(a,z)v = \sum_{n\in\ZZ} \bigl( z^{-n-1-\mathcal{N}} a\bigr)_{(n+\N)} v \,, \qquad a\in U \,, \; v\in W\,,
\end{equation*}
where $z^{-\mathcal{N}} := e^{-\ze\mathcal{N}}$.
Moreover, in the Borcherds identity \eqref{borcherds} for $a,b\in U$, $c\in W$, we can replace $\N_{12}=0$ and $\N_{13}=\mathcal{N} \otimes I \otimes I$,
and it becomes exactly the Borcherds identity \cite[(5.8)]{B}.
Similarly, identity \eqref{borcherds2} applied to $U\otimes U\otimes W$ corresponds to \cite[(5.3)]{B}.

Several examples of twisted logarithmic modules were provided in \cite{B,BS1,BS2}; they all give logarithmic vertex algebras due to the above construction.
We discuss additional examples in the next section.

\section{Examples of logarithmic vertex algebras}\label{s4}
In this section, we describe in detail several examples of conformal logVAs, including the well-known examples of symplectic fermions \cite{G1,Kau1,Kau2} and free bosons, 
a new example of a logarithmic vertex algebra associated to an integral lattice, and a logVA corresponding to Gurarie--Ludwig's logarithmic conformal field theory \cite{GL1,GL2,G2}.
In the last example, the explicit relations among the generators were previously unknown.

\subsection{Symplectic fermions}\label{ex4.2} 
As far as we know, symplectic fermions were historically the first example of logarithmic conformal field theory \cite{G1,Kau1,Kau2}.
Here, we interpret it as a conformal logVA.

Let $V$ be the Grassmann algebra $\bigwedge\bigl( \xi_{n},\chi_{n} \bigr)_{n=0,1,2,\dots}$ generated by odd variables $\xi_{n}$, $\chi_n$. 
On this vector superspace we consider the odd logarithmic fields given by:
\begin{align*}
\xi(z) 
&= \sum_{n=0}^\infty \xi_n z^n + \zeta \partial_{\chi_0} + \sum_{n=1}^\infty \partial_{\chi_n} \frac{z^{-n}}{-n}\,  , \\
\chi(z) 
&= \sum_{n=0}^\infty \chi_n z^n - \zeta \partial_{\xi_0} - \sum_{n=1}^\infty \partial_{\xi_n} \frac{z^{-n}}{-n}\,  . 
\end{align*}
The fields $\xi(z)$ and $\chi(z)$ satisfy
\begin{equation*}
\xi(z_1) \chi(z_2) = {:} \xi(z_1) \chi(z_2) {:} + \vartheta_{12}\,, \quad
[\xi(z_1), \xi(z_2)] = [\chi(z_1) \chi(z_2)] = 0\,.
\end{equation*}
Their derivatives
\begin{align*}
D_z\xi(z) &= \sum_{n=1}^\infty n\xi_n z^{n-1} + \sum_{n=0}^\infty \partial_{\chi_n} z^{-n-1} \,  , \\
D_z\chi(z) &= \sum_{n=1}^\infty n\chi_n z^{n-1} - \sum_{n=0}^\infty \partial_{\xi_n} z^{-n-1}
\end{align*}
are non-logarithmic and have the following singular OPEs:
\begin{align*}
\bigl(D_{z_1} \xi(z_1)\bigr) \bigl(D_{z_2} \chi(z_2)\bigr) &\sim \frac1{(z_1-z_2)^2} \,, &
\bigl(D_{z_1} \chi(z_1)\bigr) \bigl(D_{z_2} \xi(z_2)\bigr) &\sim \frac{-1}{(z_1-z_2)^2} \,, \\[6pt]
\bigl(D_{z_1} \xi(z_1)\bigr) \bigl(D_{z_2} \xi(z_2)\bigr) &\sim 0 \,, &
\bigl(D_{z_1} \chi(z_1)\bigr) \bigl(D_{z_2} \chi(z_2)\bigr) &\sim 0 \,,
\end{align*}
i.e., they form a pair of \emph{symplectic fermions}.

We let the vacuum vector $\vac=1$.
The translation operator $T\in \End(V)_{\bar0}$ is uniquely defined by the conditions $T\vac=0$ and
\begin{equation*}
[T,\xi_n]=(n+1)\xi_{n+1}\, ,\quad [T,\chi_n]=(n+1)\chi_{n+1}\, , \qquad n\ge0 \,.
\end{equation*}
Then it satisfies
\begin{equation*}
[T,\partial_{\chi_n}]=-n\partial_{\chi_{n-1}} \, ,\quad [T,\partial_{\xi_n}]=-n\partial_{\xi_{n-1}} \, , \qquad n\ge0 \,,
\end{equation*}
which implies that the fields $\xi(z)$ and $\chi(z)$ are translation covariant.
It is easy to check that the linear operator
\begin{equation*}
\N = \partial_{\xi_0} \otimes \partial_{\chi_0} - \partial_{\chi_0} \otimes \partial_{\xi_0}
\end{equation*}
is a braiding map on $V$ and commutes with $T\otimes I$.
Then, as in \thref{l2.19}, the operator 
\begin{equation*}
\NN = (\ad\otimes\ad)(\N) = \ad(\partial_{\xi_0}) \otimes \ad(\partial_{\chi_0}) - \ad(\partial_{\chi_0}) \otimes \ad(\partial_{\xi_0})
\end{equation*}
is a braiding map on $\LF(V)$ and also on $\LFT(V)$.

We claim that the subspace $\V=\Span\{\xi(z),\chi(z),I\} \subset\LFT(V)$ is $\NN$-local.
Indeed, conditions \eqref{2.5-1} and \eqref{2.5-2} of \deref{d2.5} follow immediately from the fact that
\begin{equation*}
[\partial_{\xi_0}, \xi(z)] = [\partial_{\chi_0}, \chi(z)] = I \,, \qquad
[\partial_{\xi_0}, \chi(z)] = [\partial_{\chi_0}, \xi(z)] = 0 \,,
\end{equation*}
which implies
\begin{equation*}
\NN(\xi\otimes\chi) = -I \otimes I \,, \quad \NN(\chi\otimes\xi) = I \otimes I \,, \quad \NN(\xi\otimes\xi) = \NN(\chi\otimes\chi) = 0 \,.
\end{equation*}
Hence,
\begin{align*}
e^{\vartheta_{12}\NN} \xi(z_{1})\chi(z_{2})
&=\xi(z_{1})\chi(z_{2})-\vartheta_{12} 
={:}\xi(z_{1})\chi(z_{2}){:} \\
&= -{:}\chi(z_{1})\xi(z_{2}){:}
=-e^{\vartheta_{21}\NN} \chi(z_{2})\xi(z_{1})\,,
\end{align*}
and the locality of all other pairs of fields is obvious.
Additionally, the space $\V$ is complete by construction (see \thref{l2.19}).

Therefore, by Theorems \ref{l2.19} and \ref{t3.14}, we obtain the structure of a logVA on $V$.
The state-field correspondence $Y$ is defined so that $Y(a,z)\vac|_{z=0} = a$ for all $a\in V$.
For instance, 
\begin{align*}
Y(\xi_0,z)&=\xi(z) \,, & Y(\chi_0,z)&=\chi(z) \,, \\ 
Y(\xi_1,z)&=D_z\xi(z) \,, & Y(\chi_1,z)&=D_z\chi(z) \,. 
\end{align*}
According to \eqref{ope5}, the generating fields $\xi(z)$ and $\chi(z)$ have the following singular OPEs:
\begin{align*}
\xi(z_1)\chi(z_2) &\sim \log(z_1-z_2) \,, &
\chi(z_1)\xi(z_2) &\sim -\log(z_1-z_2)\,, \\
\xi(z_1)\xi(z_2) &\sim 0 \,, &
\chi(z_1)\chi(z_2) &\sim 0 \,.
\end{align*}

Finally, we note that $V$ is a conformal logVA with central charge $c=-2$ (see Definition \ref{d3.10}), where the conformal vector is given by 
\begin{equation*}
\om = \chi_{1}\xi_{1}\, ,  \qquad  Y(\om,z)={:} \bigl(D_z\chi(z)\bigr)\bigl(D_z\xi(z)\bigr) {:}\, .
\end{equation*}
Then $L_{0}=L_{0}^{(s)}+L_{0}^{(n)}$ with
\begin{equation*}
 L^{(s)}_{0}=\sum_{n=1}^\infty \bigl( n\chi_{n}\partial_{\chi_{n}}+n\xi_{n}\partial_{\xi_{n}} \bigr), \qquad 
 L^{(n)}_{0}=\partial_{\chi_{0}}\partial_{\xi_{0}}\,,
\end{equation*}
which agrees with \prref{pro3.11}. 

It is clear that if we take $r$ anti-commuting pairs of fields $(\xi^i(z),\chi^i(z))$, where each pair is as $(\xi(z),\chi(z))$,
they will generate a logVA that is an $r$-th tensor power of the logVA $V$ constructed above. The tensor product
of two logarithmic vertex algebras is described in more detail in the following remark.

\begin{remark}\label{rem-tp}
Let $(V',\vac',T',Y',\N')$ and $(V'',\vac'',T'',Y'',\N'')$ be two logVAs. Then their \emph{tensor product} is the logVA
\begin{equation*}
(V'\otimes V'', \, \vac'\otimes \vac'', \, T'\otimes I+I\otimes T'', \, Y, \, \N'_{13}+\N''_{24}) \,,
\end{equation*}
where 
\begin{equation*}
Y(a\otimes b ,z)=Y'(a,z)\otimes Y''(b,z) \,, \qquad a\in V' \,, \; b\in V'' \,.
\end{equation*}
If $V'$ and $V''$ are conformal with conformal vectors $\om'$ and $\om''$, respectively, then $V'\otimes V''$ is also
conformal with 
\begin{equation*}
\om=\om'\otimes \vac''+\vac'\otimes \om'' 
\end{equation*}
and central charge equal the sum of central changes of $V'$ and $V''$.
\end{remark}

\subsection{Free bosons}\label{ex4.1}  
This example is a continuation of Example \ref{ex2.5}, but for completeness we present it again from scratch.
It is similar to the example of symplectic fermions from \seref{ex4.2}.

Let $V=\mathbb{C}[x_{0},x_{1},x_2,\dots]$ be the space of polynomials in infinitely many even variables. 
We consider the logarithmic field on $V$ given by  
\begin{equation*}
x(z) = \sum_{n=0}^\infty x_n z^n + \partial_{x_0} \ze + \sum_{n=1}^\infty \partial_{x_n} \frac{z^{-n}}{-n} \,,
\end{equation*}
which satisfies
\begin{equation*}
x(z_1)x(z_2)= {:}x(z_1)x(z_2){:} +\vartheta_{12}\, .
\end{equation*}
The derivative
\begin{equation*}
D_z x(z) = \sum_{n=1}^\infty n x_n z^{n-1} + \sum_{n=0}^\infty \partial_{x_n} z^{-n-1}
\end{equation*}
is a \emph{free boson} field with the singular OPE
\begin{equation*}
\bigl(D_{z_1} x(z_1)\bigr) \bigl(D_{z_2} x(z_2)\bigr) \sim \frac1{(z_1-z_2)^2} \,.
\end{equation*}

We let the vacuum vector $\vac=1$, and define the translation operator $T\in \End(V)$ by $T\vac=0$ and
\begin{equation*}
[T,x_n]=(n+1)x_{n+1}\, , \qquad n\ge0 \,.
\end{equation*}
Then one checks that
\begin{equation*}
[T,\partial_{x_n}]=-n\partial_{x_{n-1}} \, , \qquad n\ge0 \,,
\end{equation*}
so the field $x(z)$ is translation covariant.
We define a braiding map on $V$ by
\begin{equation*}
\N = -\partial_{x_0} \otimes \partial_{x_0} \,.
\end{equation*}
Then $[\N,T\otimes I]=0$; hence
\begin{equation*}
\NN = (\ad\otimes\ad)(\N) = -\ad(\partial_{x_0})\otimes \ad(\partial_{x_0})
\end{equation*}
is a braiding map on $\LFT(V)$ satisfying the conditions of \thref{l2.19}.

The subspace $\V=\Span\{x(z), I\} \subset\LFT(V)$ is $\NN$-local because
\begin{equation*}
\NN (x \otimes x) = -I \otimes I
\end{equation*}
and
\begin{align*}
e^{\vartheta_{12}\NN} x(z_1)x(z_2)
&=x(z_1)x(z_2)-\vartheta_{12} 
={:}x(z_1)x(z_2){:} \\
&={:}x(z_2)x(z_1){:} 
=e^{\vartheta_{21}\NN}x(z_2)x(z_1)\,.
\end{align*}
We can apply Theorems \ref{l2.19} and \ref{t3.14} to conclude that $V$ has the structure of a logVA.
The state-field correspondence $Y$ satisfies
\begin{equation*}
Y(x_0,z)=x(z) \,, \qquad Y(x_1,z)=D_z x(z) \,. 
\end{equation*}
The generating field $x(z)$ has the singular OPE
\begin{equation*}
x(z_1)x(z_2) \sim \log(z_1-z_2) \,.
\end{equation*}

The logVA $V$ is conformal with central charge $c=1$ and
\begin{equation*}
\om=\frac12 (x_1)^2 \,, \qquad Y(\om,z)=\frac12 {:} \bigl(D_z x(z)\bigr)^2 {:} \,. 
\end{equation*}
In particular,  $L_{0}=L_{0}^{(s)}+L_{0}^{(n)}$ where
\begin{equation*}
L^{(s)}_{0}=\sum_{n=1}^\infty nx_{n}\partial_{x_{n}}\, , \qquad L^{(n)}_{0}=\frac{1}{2}\partial_{x_{0}}^2 \,,
\end{equation*}
in agreement with \prref{pro3.11}.

Now consider $r$ commuting logarithmic fields $x^1(z),\dots,x^r(z)$ where each of them is a logarithmic free boson.
Then
\begin{equation*}
x^i(z_1)x^j(z_2)= {:}x^i(z_1)x^j(z_2){:} + \de_{i,j} \vartheta_{12}\,,
\end{equation*}
and they generate a logVA isomorphic to the $r$-th tensor power of the free boson logVA (see \reref{rem-tp}).
Explicitly, it is identified with the space of polynomials in even variables $x^i_n$ ($1\le i\le r$, $0\le n$).
Here we describe this example in a basis-independent way.

Start with an $r$-dimensional vector space $\h$ equipped with a non-degenerate symmetric bilinear form $(\cdot|\cdot)$. 
Recall that the \emph{free boson vertex algebra} $B_\h$ is defined as follows (see, e.g., \cite{K}).
As a vector space, $B_\h$ is the symmetric algebra $S(t^{-1}\h[t^{-1}])$. This is a highest-weight representation of the 
Heisenberg Lie algebra $\hat\h=\h[t,t^{-1}]\oplus\CC K$ with the Lie brackets
\begin{equation*}
[a_m,b_n] = m\de_{m,-n} (a|b) K \,, \quad [K,\hat\h]=0 \,, \qquad a,b\in\h \,, \; m,n\in\ZZ\,,
\end{equation*}
where $K$ acts as $I$ and we use the shorthand notation $a_m = at^m$.
The vertex algebra $B_\h$ is generated by the free boson fields
\begin{equation*}
a(z) = \sum_{m\in\ZZ} a_m z^{-m-1} 
\end{equation*}
which have singular OPEs
\begin{equation*}
a(z_1)b(z_2) \sim \frac{(a|b)}{(z_1-z_2)^2} \,, \qquad a,b\in\h \,.
\end{equation*}
The vacuum vector is the highest-weight vector $1$.

In order to construct a logarithmic extension of $B_\h$, we first extend the
Heisenberg Lie algebra $\hat\h$ to the Lie algebra $\hat\h^{\log}$ defined as 
the abelian extension of $\hat\h$ by its module $\tilde\h\simeq\h$.
More explicitly, $\hat\h^{\log} = \hat\h\oplus\tilde\h$ as a vector space, 
the brackets in $\hat\h$ are as before, and
\begin{equation*}
[a_m,\tilde b] = \de_{m,0} (a|b) K \,, \quad [\tilde a,\tilde b] = 0 \,, \qquad a,b\in\h \,, \; m\in\ZZ\,,
\end{equation*}
where the elements in $\tilde\h$ are denoted as $\tilde h$ for $h\in\h$.
Now we introduce the \emph{logarithmic extension} of $B_\h$:
\begin{equation*}
B_\h^{\log} =  B_\h \otimes S(\tilde\h) \,.
\end{equation*}
Since $\tilde\h$ is an $\hat\h$-module, $S(\tilde\h)$ and hence $B_\h^{\log}$ are also $\hat\h$-modules.
If we let the abelian Lie algebra $\tilde\h$ act trivially on $B_\h$ and as multiplication on $S(\tilde\h)$,
then $B_\h^{\log}$ becomes an $\hat\h^{\log}$-module.

For every $a\in\h$, we define a logarithmic field
\begin{equation*}
\tilde{a}(z) = \tilde{a} + \ze a_0 + \sum_{n\ne0} a_n \frac{z^{-n}}{-n} \,,
\end{equation*}
so that
\begin{equation*}
D_z\tilde{a}(z) = a(z) \,, \qquad a\in\h \,.
\end{equation*}
These fields satisfy $(a,b\in\h)$:
\begin{equation*}
\tilde{a}(z_1) \tilde{b}(z_2) = {:}a(z_1)b(z_2){:} + (a|b) \vartheta_{12}\,,
\end{equation*}
which means that their singular OPEs are
\begin{equation*}
\tilde{a}(z_1) \tilde{b}(z_2) \sim (a|b) \log(z_1-z_2) \,.
\end{equation*}
By taking $D_{z_1}$ and $D_{z_2}$, we also get
\begin{equation*}
a(z_1) \tilde{b}(z_2) \sim \frac{(a|b)}{z_1-z_2}  \,, \qquad
\tilde{a}(z_1) b(z_2) \sim -\frac{(a|b)}{z_1-z_2}  \,.
\end{equation*}

Let $\{h^1,\dots,h^r\}$ be an orthonormal basis for $\h$ with respect to the bilinear form $(\cdot|\cdot)$.
Then the fields $\tilde{h}^i(z)$ can be identified with the above $x^i(z)$, and 
$B_\h^{\log} \simeq \CC[x^i_n]_{1\le i\le r,\, 0\le n}$ so that
\begin{equation*}
\tilde{h}^i = x^i_0 \,, \qquad h^i_n = \partial_{x^i_n} \,, \qquad h^i_{-m} = m x^i_m
\qquad (n\ge0 \,, \; m\ge1) \,.
\end{equation*}
The vacuum vector is $\vac=1\otimes1$, the translation operator $T$ on $B_\h^{\log}$ is defined by $T\vac=0$ and
\begin{equation*}
[T,\tilde{a}]=a_{-1} \,, \quad
[T,a_n]=-n a_{n-1}\, , \qquad a\in\h, \;\; n\in\ZZ \,,
\end{equation*}
and the braiding map is given by
\begin{equation*}
\N 
= - \sum_{i=1}^r h^i_0 \otimes h^i_0  \in \End(B_\h^{\log}) \otimes \End(B_\h^{\log}) \,.
\end{equation*}
As usual, we also have a conformal vector with central charge $r=\dim\h$,
\begin{equation*}
\om 
= \frac12 \sum_{i=1}^r (h^i_{-1})^2 \otimes 1 \in B_\h^{\log} \,,
\end{equation*}
the same as the conformal vector in $B_\h$. However, unlike its action on $B_\h$,
the action of $L_0$ on $B_\h^{\log}$ is not semisimple. Its locally nilpotent part is
\begin{equation*}
L_0^{(n)} = \frac12 \sum_{i=1}^r (h^i_{0})^2 \,,
\end{equation*}
which is consistent with \prref{pro3.11}. 
In conclusion, $B_\h^{\log}$ is a conformal logVA such that
\begin{equation*}
Y(1\otimes\tilde{a}, z) = \tilde{a}(z) \,, \quad
Y(a_{-1} \otimes 1,z) = a(z) \,, \qquad a\in\h \,.
\end{equation*}
Observe that both $\N$ and $\om$ are independent of the choice of basis for $\h$.

\subsection{Lattice logVA}\label{ex4.5} 

Let $Q$ be an integral lattice, i.e., a free abelian group of rank $r$
equipped with a non-degenerate symmetric bilinear form $(\cdot|\cdot) \colon Q\times Q\to\ZZ$. 
We extend the bilinear form to the complex vector space $\h=\CC\otimes_\ZZ Q$ and identify 
$Q$ as a subset of $\h$.
We will continue to use the notation of \seref{ex4.1}. 

First, recall the definition of the \emph{lattice vertex algebra} $V_Q$ (see, e.g., \cite{K}).
There is a $2$-cocycle $\ep\colon Q \times Q \to \{\pm1\}$ such that
\begin{equation*}
\ep(\al,\al) = (-1)^{|\al|^2(|\al|^2+1)/2}  \,, \qquad |\al|^2 = (\al|\al) \,, \;\; \al\in Q \,.
\end{equation*}
It gives rise to the associative algebra $\CC_\ep[Q]$ with a basis
$\{ e^\al \}_{\al\in Q}$ and product
\begin{equation*}
e^\al e^\be = \ep(\al,\be) e^{\al+\be} \,,
\end{equation*}
which is a central extension of the multiplicative group algebra $\CC[Q]$ of the additive group $Q$.
Such a $2$-cocycle $\ep$ is unique up to equivalence and can be chosen
to be bimultiplicative. 

Then $V_Q$ is defined as the superspace $V_Q=B_\h\otimes\CC_\ep[Q]$,
where $B_\h$ is even and the parity of $e^\al$ is $|\al|^2$ mod $2\ZZ$.
The vacuum vector is $\vac=1\otimes e^0$.
The translation operator $T$ on $V_Q$ is given by 
\begin{equation*}
T(v \otimes e^\be) = (Tv) \otimes e^\be + (\be_{-1}v) \otimes e^\be \,, \qquad
v\in  B_\h \,, \; \be\in Q \,,
\end{equation*}
where $T$ acts on $B_\h$ as in \seref{ex4.1}.
We let the Heisenberg algebra act on $V_Q$ by
\begin{equation*}
a_n (v \otimes e^\be) = (a_n v) \otimes e^\be + \de_{n,0} (a|\be) v\otimes e^\be
\,, \qquad v\in  B_\h \,, \; \be\in Q \,.
\end{equation*}
Then the state-field correspondence on $V_Q$ is uniquely determined by the
generating fields:
\begin{align*}
Y(a_{-1}\otimes 1,z) &= \sum_{n\in\ZZ} a_{n} z^{-n-1} \,, \qquad a\in\lieh \,,
\\ 
Y(1\otimes e^\al,z) &= 
\exp\Bigl( \sum_{n<0} \al_{n} \frac{z^{-n}}{-n} \Bigr) 
\exp\Bigl( \sum_{n>0} \al_{n} \frac{z^{-n}}{-n} \Bigr) \otimes e^\al z^{\al_0} \,,
\end{align*}
where $e^\al$ acts as multiplication on $\CC_\ep[Q]$ and $z^{\al_0} e^\be = z^{(\al|\be)} e^\be$.

Now we introduce the \emph{logarithmic extension} of $V_Q$:
\begin{equation*}
V_Q^{\log} =  B_\h^{\log} \otimes \CC_\ep[Q] = B_\h \otimes S(\tilde\h) \otimes \CC_\ep[Q] \,,
\end{equation*}
which we claim is a conformal logVA.
The vacuum vector in $V_Q^{\log}$ is $\vac=1\otimes 1\otimes e^0$, and the translation operator $T$ is given by 
\begin{equation*}
T(v \otimes e^\be) = (Tv) \otimes e^\be + (\be_{-1}v) \otimes e^\be \,, \qquad
v\in  B_\h^{\log} \,, \; \be\in Q \,.
\end{equation*}
Next, we let $\tilde\h$ act trivially on $\CC_\ep[Q]$, and thus obtain the structure of an $\hat\h^{\log}$-module $V_Q^{\log}$.
The (logarithmic) free boson fields 
\begin{equation*}
Y(a_{-1}\otimes 1 \otimes e^0,z) = a(z) \,, \quad
Y(1\otimes\tilde{a}\otimes e^0, z) = \tilde{a}(z) \,, \qquad a\in\h \,,
\end{equation*}
are as in \seref{ex4.1}. The vertex operators $Y(1\otimes 1\otimes e^\al,z)$ are also defined as above, but one has to take into account that
$\al_0$ now acts not only on the factor $\CC_\ep[Q]$ but also on $S(\tilde\h)$. Its action on $S(\tilde\h)$ is locally nilpotent;
hence $z^{\al_0}$ should be interpreted as $e^{\ze\al_0}$ when acting there (cf.\ \cite{BS2}). Explicitly,
\begin{align*}
Y(1&\otimes 1\otimes e^\al,z) \\
&= \Gamma_\al(z) := 
\exp\Bigl( \sum_{n<0} \al_{n} \frac{z^{-n}}{-n} \Bigr) 
\exp\Bigl( \sum_{n>0} \al_{n} \frac{z^{-n}}{-n} \Bigr) 
\otimes e^{\ze\al_0} \otimes e^\al z^{\al_0} \,.
\end{align*}
The logVA $V_Q^{\log}$ is generated by the fields $\tilde{a}(z)$ and $\Gamma_\al(z)$.
The singular OPEs between them are as in \cite{K} and in \seref{ex4.1}, except for the following new OPE:
\begin{equation*}
\tilde{a}(z_1) \Gamma_\be(z_2) \sim (a|\be) \Gamma_\be(z_2) \log(z_1-z_2) \,, \qquad a\in\h \,, \; \be\in Q\,.
\end{equation*}

The logVA $V_Q^{\log}$ is conformal of central charge $r=\rank Q$ with the same conformal vector as $B_\h$ (or $V_Q$ as well).
Note, however, that the linear operator $a_0$ for $a\in\h$ is locally nilpotent on $S(\tilde\h)$ and semisimple on $\CC_\ep[Q]$
(and zero on $B_\h$).
Hence,
\begin{equation*}
a_0^{(n)} = I \otimes a_0 \otimes I \,, \qquad a_0^{(s)} = I \otimes  I \otimes a_0 \,.
\end{equation*}
Let, as before, $\{h^1,\dots,h^r\}$ be an orthonormal basis for $\h$ with respect to the bilinear form $(\cdot|\cdot)$.
Then the locally nilpotent part of $L_0$ is
\begin{equation*}
L_0^{(n)} = \frac12 \sum_{i=1}^r \Bigl( (h^i_{0})^2 - \bigl(h_{0}^{i\,(s)}\bigr)^2 \Bigr)
= \frac12 \sum_{i=1}^r \Bigl( h^{i\,(n)}_{0}  h^{i\,(n)}_{0} + 2 h^{i\,(n)}_{0} h^{i\,(s)}_{0} \Bigr).
\end{equation*}
It follows from \prref{pro3.11} that the braiding map on $V_Q^{\log}$ is given by
\begin{equation*}
\N = -\sum_{i=1}^r \Bigl( h^{i\,(n)}_{0} \otimes h^{i\,(n)}_{0} + h^{i\,(n)}_{0} \otimes h^{i\,(s)}_{0} + h^{i\,(s)}_{0} \otimes h^{i\,(n)}_{0} \Bigr).
\end{equation*}
This is consistent with the above OPEs and the general OPE formula \eqref{ope5}.

\subsection{Gurarie--Ludwig's LCFT}\label{e4.7}

In this subsection, we interpret an example of a logarithmic conformal field theory (LCFT) due to Gurarie--Ludwig \cite{GL1,GL2,G2} as an example of a conformal logVA $V$.
We will first present the LCFT and derive from it the properties that $V$ should satisfy; then we will take these properties as a definition and prove rigorously that
indeed we have a logVA. In the process, we will discover relations among the generators of $V$, which were previously unknown.

The superspace of states $V$ is a representation of
the Virasoro Lie algebra with central charge $c=0$. As usual, the conformal vector $\om\in V_{\bar0}$ corresponds to the Virasoro field 
\begin{equation}\label{logex0}
L(z) = Y(\om,z) = \sum_{n\in\ZZ} L_n z^{-n-2}
\end{equation}
(see \eqref{Virfield}),
which is denoted as $T(z)$ in \cite{G2}.
It is assumed that $V$ contains odd vectors $\xi,\bar\xi \in V_{\bar1}$ such that the corresponding fermionic fields
\begin{equation*}
\xi(z) = Y(\xi,z) \,, \qquad \bar\xi(z) = Y(\bar\xi,z)
\end{equation*}
are \emph{primary} of conformal weight $2$, i.e.,
\begin{equation}\label{logex1a}
L_{0}\xi=2\xi\, , \quad L_{0}\bar{\xi}=2\bar{\xi}\, ,  \quad L_{n}\xi=L_{n}\bar{\xi}=0\,  , \qquad n>0 \,.
\end{equation}
Notice that $L(z)$ is also a primary field because $c=0$:
\begin{equation}\label{logex1b}
L_{0}\om=2\om\, , \quad L_{n}\om=0\,  , \qquad n>0 \,.
\end{equation}
Finally, it is assumed that $\om$ has a \emph{logarithmic partner} $\ell\in V_{\bar0}$ 
such that
\begin{equation}\label{logex1c}
L_{0}\ell=2\ell+\om\, , \quad L_{2}\ell=\be\vac \,, \quad L_1\ell=L_{n}\ell=0\,  , \qquad n>2 \,,
\end{equation}
for some $\be\in\CC$, which is a parameter of the theory (in \cite{G2}, $\ell$ is denoted as $t$ and $\be$ as $b$).
Here, as before, $\vac$ is the vacuum vector.
The above equations imply that $L_0$ is not semisimple on the subspace $\Span\{\om,\ell,\xi,\bar\xi\} \subset V$,
but instead $L_{0}=L_{0}^{(s)}+L_{0}^{(n)}$ with
\begin{equation*}
L_{0}^{(s)}=2I \,, \qquad L_{0}^{(n)} \ell=\om \,, \qquad L_{0}^{(n)} \om=L_{0}^{(n)}\xi = L_{0}^{(n)} \bar\xi = 0\,.
\end{equation*}

As usual, the above action of the Virasoro Lie algebra on the states is equivalent to specifying the singular OPEs between $L(z)$ and the fields corresponding to the states:
\begin{equation}\label{logex-ope1}
\begin{split}
L(z)L(0) &\sim \frac{2{L}(0)}{z^{2}}+\frac{T {L}(0)}{z}\, , \\[6pt]
L(z)\ell(0) &\sim \frac{\be}{z^{4}}+\frac{2\ell(0)+L(0)}{z^{2}}+\frac{T\ell(0)}{z}\, , \\[6pt]
L(z){\xi}(0) &\sim \frac{2{\xi}(0)}{z^{2}}+\frac{T {\xi}(0)}{z}\, , \qquad
L(z)\bar{\xi}(0) \sim \frac{2\bar{\xi}(0)}{z^{2}}+\frac{T \bar{\xi}(0)}{z}\,.
\end{split}
\end{equation}
Here, as in \cite{G2}, we are using the shorthand form \eqref{ope6} of the OPE \eqref{ope5}.
Its rigorous meaning is that, after applied to the vacuum vector, we have $L(0)=\om$, $\xi(0)=\xi$, etc.
We also use our previous notation $T=L_{-1}$ for the translation operator, and set $\ell(z)=Y(\ell,z)$.

The remaining singular OPEs were determined in \cite{GL1,GL2,G2} to be as follows:
\begin{equation}\label{logex-ope2}
\begin{split}
\xi(z)\bar{\xi}(0) &\sim \frac{\be}{2z^{4}}+\frac{\ell(0)}{z^{2}}+ \frac{T\ell(0)}{2z} + \frac18 L(z)L(0) + \frac12 L(z)L(0) \log(z) \,,\\[6pt]
\ell(z){\xi}(0) &\sim \frac{T\xi(0)}{2z} + \frac14 L(z)\xi(0) - L(z)\xi(0)\log(z)\, ,\\[6pt]
\ell(z)\bar{\xi}(0) &\sim \frac{T\bar{\xi}(0)}{2z} + \frac14 L(z)\bar{\xi}(0) - L(z)\bar{\xi}(0) \log(z) \, ,\\[6pt]
\ell(z)\ell(0) &\sim \frac{-2\be\log(z)}{z^{4}}+\frac{\ell(0)(1-4\log(z))-L(0)(\log(z)+2\log^{2}(z))}{z^{2}} \\[6pt]
&+\frac{T\ell(0)(1-4\log(z))-TL(0)(\log(z)+2\log^{2}(z))}{2z}\, .
\end{split}
\end{equation}
The OPEs of opposite products are derived from the above using skew-symmetry (\prref{pro3.16}), and any other singular OPEs are assumed zero.
In the right-hand sides of the first three equations, the products $L(z)L(0)$, $L(z)\xi(0)$ and $L(z)\bar{\xi}(0)$ should be further expanded according to their OPEs,
which gives the following expressions:
\begin{align*}
\xi(z)\bar{\xi}(0) &\sim \frac{\be}{2z^{4}}+\frac{4\ell(0)+L(0)+4L(0)\log(z)}{4z^{2}} \\[6pt]
&+ \frac{4T\ell(0)+TL(0)+4TL(0)\log(z)}{8z} \,,\\[6pt]
\ell(z){\xi}(0) &\sim \frac{\xi(0)}{2z^2} + \frac{3T\xi(0)}{4z}  - \frac{2{\xi}(0) \log(z)}{z^{2}}+\frac{T {\xi}(0) \log(z)}{z} \,,\\[6pt]
\ell(z)\bar{\xi}(0) &\sim \frac{\bar\xi(0)}{2z^2} + \frac{3T\bar\xi(0)}{4z}  - \frac{2{\bar\xi}(0) \log(z)}{z^{2}}+\frac{T {\bar\xi}(0) \log(z)}{z} \,.
\end{align*}
Conversely, the above OPE of $\ell(z)\ell(0)$ is already in expanded form, while in compact form it can be written as:
\begin{equation*}
\ell(z)\ell(0) \sim \frac{\ell(0)}{z^{2}} +\frac{T\ell(0)}{2z} -4\xi(z)\bar{\xi}(0)\log(z) + L(z)L(0) \log^{2}(z) \, .
\end{equation*}

\begin{remark}\label{rem-ss}
Let us show, for example, how to apply the skew-symmetry relation from \prref{pro3.16} to find the singular OPE of $\xi(z) \bar{\xi}(0)$. It is given by the singular part of
\begin{align*}
-\frac{e^{zT} \be}{2(-z)^{4}}&-\frac{e^{zT} \bigl(4\ell(0)+L(0)+4L(0)\log(z)\bigr)}{4(-z)^{2}} \\[6pt]
&- \frac{e^{zT} \bigl(4T\ell(0)+TL(0)+4TL(0)\log(z)\bigr)}{8(-z)} \,,
\end{align*}
which simplifies to $\bar{\xi}(z)\xi(0) \sim -\xi(z)\bar{\xi}(0)$.
\end{remark}

Let us assume that $V$ is a logVA generated by the logarithmic fields $L(z)$, $\ell(z)$, $\xi(z)$ and $\bar\xi(z)$.
Since all these fields are of conformal weight $2$, we will label their modes similarly to those of $L(z)$:
\begin{equation}\label{logex01}
\begin{split}
X(\xi,z)&=\xi(z)\big|_{\ze=0}=\sum_{n\in \ZZ}\xi_{n}z^{-n-2}\,, \\
X(\bar\xi,z)&=\bar{\xi}(z)\big|_{\ze=0}=\sum_{n\in \ZZ}\bar{\xi}_{n}z^{-n-2}\,,\\
X(\ell,z)&=\ell(z)\big|_{\ze=0}=\sum_{n\in \ZZ}\ell_{n}z^{-n-2} \,.
\end{split}
\end{equation}
This means that we have a shift
\begin{equation}\label{logex2}
\om_{(n+\N)} = L_{n-1} \,, \quad
a_{(n+\N)} = a_{n-1} \,, \qquad n\in\ZZ \,, \;\; a\in\{\ell,\xi,\bar\xi\} \, .
\end{equation}
According to \eqref{ope6}, the above singular OPEs encode the action of the braiding map $\N$ and the $(n+\N)$-th products of the generators of $V$.
(However, $\N$ is not determined uniquely from the OPEs.)
Setting $\log(z)=0$ in the OPEs, we obtain the action of the modes:
\begin{equation}\label{logex3a}
\begin{aligned}
\xi_2\bar\xi &= \frac{\be}2\vac \,, &  \xi_1\bar\xi &= 0 \,, &  \xi_0\bar\xi &= \frac14(4\ell+\om) \,, & \xi_{-1} \bar\xi &= \frac{T}8(4\ell+\om) \,, \\[6pt]
\ell_0\xi &= \frac12 \xi \,, & \ell_{-1}\xi &= \frac34 T\xi \,, &  \ell_0\bar\xi &=\frac12 \bar\xi \,, & \ell_{-1}\bar\xi &= \frac34 T\bar\xi \,, \\[6pt]
\ell_0\ell &=  \ell \,, & \ell_{-1}\ell &= \frac12 T\ell \,,
\end{aligned}
\end{equation}
and
\begin{equation}\label{logex3b}
\xi_m\bar\xi = 0 \,, \quad \ell_n a = 0 \quad\text{for all}\quad m>2 \,, \; n>0 \,, \; a\in\{\ell,\xi,\bar\xi\} \,.
\end{equation}
All other products $a_nb$ with $a,b\in\{\ell,\xi,\bar\xi\}$, $n\ge-1$, either follow from the skew-symmetry relation or are zero.

We will find the braiding map $\N$ on $V$ by utilizing \prref{pro3.11}. It was observed in \cite{GL1,GL2,G2} that the above OPEs of generating fields admit odd derivations
$\eta$ and $\bar\eta$ if we define their action on $\om$, $\ell$, $\xi$, $\bar\xi$ according to the diagram
\begin{equation}\label{logex4}
\begin{gathered}
\xymatrix{
                           & \ell \ar[ld]_{-{\eta}/{2}} \ar[dr]^{{{\bar{\eta}}/{2}}} &                       \\ 
\xi \ar[dr]_{\bar{\eta}} &                               & \bar{\xi} \ar[dl]^{\eta} \quad.\\
                           & \om
}
\end{gathered}
\end{equation}
Thus, we obtain odd derivations $\eta,\bar\eta\in\Der(V)$ of the logVA $V$ satisfying
\begin{equation}\label{etabareta}
\eta^2=\bar\eta^2=0 \,, \qquad \eta\bar\eta=-\bar\eta\eta \,.
\end{equation}

\begin{remark}
The action of $\eta$ and $\bar\eta$ can be extended to an action of the 
Lie superalgebra $\mathfrak{gl}(1|1)$, which has a basis $\{E,N,\eta,\bar{\eta}\}$ and the non-vanishing brackets 
\begin{equation*}
[N,\eta]=\eta \, ,\qquad  [N,\bar{\eta}]=-\bar{\eta}\, ,\qquad [\eta,\bar{\eta}]=E\, .
\end{equation*}
In our case, $N\om=N\ell=0$, $N\xi=\xi$, $N\bar\xi=-\bar\xi$, and $E=0$.
\end{remark}

Now observe that $L^{(n)}_{0}=\frac{1}{2} \eta\bar\eta$. Hence, by \prref{pro3.11},
\begin{equation}\label{logex5}
\N = \frac{1}{2}(\bar{\eta} \otimes \eta -  \eta \otimes \bar\eta) \,.
\end{equation}
From here, using \eqref{logf-nact}, we compute the action of $\N$ on tensor products of generators and obtain:
\begin{align*}
\N(\bar\xi\otimes\xi) &= - \N(\xi\otimes\bar\xi) = \frac12 \om\otimes\om \,, &
\N(\ell\otimes\xi) &= \N(\xi\otimes\ell) = \xi\otimes\om \,, \\
\N(\ell\otimes\ell) &= 2\xi\otimes\bar\xi-2\bar\xi\otimes\xi \,, &
\N(\ell\otimes\bar\xi) &= \N(\bar\xi\otimes\ell) = \bar\xi\otimes\om \,.
\end{align*}
The action on all other tensors not listed here is zero.
In particular, $\N(\om\otimes a)=0$ for all $a\in\{\om,\ell,\xi,\bar\xi\}$, as it should by \deref{d3.10}\eqref{3.17-1}.
We also have
\begin{equation*}
\N^2(\ell\otimes\ell) = -2 \om\otimes\om \,,
\end{equation*}
and $\N^2=0$ on all other tensors.

\begin{lemma}\label{lem4.4}
Given the above\/ $(n+\N)$-th products\/ \eqref{logex2}--\eqref{logex3b} of generators\/ $\ell$, $\xi$, $\bar\xi$ and action of\/ $\N$ by\/ \eqref{logex5}, the singular OPEs \eqref{logex-ope1}, \eqref{logex-ope2}
of generating fields agree with the general formula \eqref{ope6}. 
\end{lemma}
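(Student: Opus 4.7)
The plan is a direct case-by-case verification. For each ordered pair $(a,b)$ of generators in $\{\omega, \ell, \xi, \bar{\xi}\}$, I will expand the right-hand side of \eqref{ope6} using the products \eqref{logex2}--\eqref{logex3b} together with the braiding \eqref{logex5}, and compare with the stated OPE in \eqref{logex-ope1}--\eqref{logex-ope2}. Pairs with the reversed ordering follow from skew-symmetry (Proposition \ref{pro3.16}, as in Remark \ref{rem-ss}), so only the listed orderings need to be checked.

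For the pairs with $a = \omega$, the identity $\N(\omega \otimes b) = 0$ implies that only the $i = 0$ summand of \eqref{ope6} contributes; via the shift $\omega_{(n+\N)} = L_{n-1}$, the OPEs \eqref{logex-ope1} reduce to the standard Virasoro and primary-field formulas applied to \eqref{logex1a}--\eqref{logex1c}. For the four nontrivial pairs $(\xi,\bar{\xi})$, $(\ell,\xi)$, $(\ell,\bar{\xi})$, $(\ell,\ell)$, the $i=0$ contribution reproduces the non-logarithmic part of \eqref{logex-ope2} directly from \eqref{logex3a}--\eqref{logex3b}. The $i=1$ logarithmic contributions are governed by $\N a_{(n+\N)} b = \mu_{(n)}(\N(a \otimes b))$; I would compute each $\N(a \otimes b)$ from \eqref{logex5} using the values of $\eta, \bar{\eta}$ encoded in \eqref{logex4} together with the parity signs \eqref{logf-nact}, obtaining for $(\xi,\bar{\xi}), (\ell,\xi), (\ell,\bar{\xi})$ tensors built from $\omega\otimes\omega$, $\xi\otimes\omega$, and $\bar{\xi}\otimes\omega$, whose $\mu_{(n)}$-products reduce back to the Virasoro and primary-field actions handled in the first step.

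The most delicate step will be the $(\ell,\ell)$ case, where $\N(\ell \otimes \ell) = 2(\xi \otimes \bar{\xi} - \bar{\xi} \otimes \xi)$ reduces the $i=1$ part to the previously verified $(\xi,\bar{\xi})$ OPE, while $\N^2(\ell \otimes \ell) = -2\,\omega \otimes \omega$ is nonzero and contributes a genuine $\log^2 z$ piece through the $i=2$ summand; this must combine with the $\log^2 z$ content hidden inside the $-4\xi(z)\bar{\xi}(0)\log z$ term (since $\xi(z)\bar{\xi}(0)$ itself carries a $\log z$) to reproduce exactly the stated $\log^2 z$ content. The main obstacle is not conceptual but organizational: one must consistently apply the shift \eqref{logex2}, the parity signs \eqref{logf-nact} for the odd derivations $\eta,\bar{\eta}$, and the skew-symmetry of Proposition \ref{pro3.16} to relate $\bar{\xi}_{(n+\N)}\xi$ to $\xi_{(n+\N)}\bar{\xi}$; the condition $c = 0$ enters implicitly via $L_2 \omega = 0$, eliminating a potential $\log^2 z/z^4$ contribution.
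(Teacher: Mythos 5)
Your proposal is correct and follows essentially the same route as the paper: the paper likewise expands $Y(a,z)b = X(z)\,e^{-\ze\N}(a\otimes b)$ term by term in $\ze$, computes $\N(a\otimes b)$ and $\N^2(\ell\otimes\ell)$ explicitly, and uses skew-symmetry to convert $X(\xi,z)\om\sim L(z)\xi$, $X(\bar\xi,z)\xi\sim -X(\xi,z)\bar\xi$, etc., with the $i=0$ parts matching by construction. Your observations about the $\log^2 z$ bookkeeping in the $(\ell,\ell)$ case and the role of $c=0$ (via $L_2\om=0$) are accurate and consistent with the paper's computation.
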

\begin{proof}
Since the terms of the OPE of $Y(a,z)Y(b,0)$ given by \eqref{ope6} correspond to the action $Y(a,z)b$ given by \eqref{nprod-7}, it is enough to compute the latter.
Using \eqref{logva3}, we find:
\begin{align*}
\xi(z)\bar{\xi} &= X(\xi,z)\bar\xi + \frac{\ze}2 L(z)\om \,,\\
\ell(z){\xi} &= X(\ell,z)\xi - \ze X(\xi,z)\om \, , \qquad \ell(z){\bar\xi} = X(\ell,z)\bar\xi - \ze X(\bar\xi,z)\om \, , \\
\ell(z)\ell &= X(\ell,z)\ell -2\ze\bigl(X(\xi,z)\bar{\xi}-X(\bar{\xi},z)\xi\bigr) - \ze^2 L(z)\om \, .
\end{align*}
Due to skew-symmetry (\prref{pro3.16}), we have as in \reref{rem-ss},
\begin{equation*}
X(\xi,z)\om \sim L(z)\xi \,, \qquad X(\bar\xi,z)\om \sim L(z)\bar\xi \,, \qquad X(\bar{\xi},z)\xi \sim -X(\xi,z)\bar{\xi} \,,
\end{equation*}
where $\sim$ here means equality of the singular parts that involve negative powers of $z$. Therefore,
\begin{align*}
\ell(z){\xi} &\sim X(\ell,z)\xi - \ze L(z)\xi \, , \qquad \ell(z){\bar\xi} \sim X(\ell,z)\bar\xi - \ze L(z)\bar\xi \, , \\
\ell(z)\ell &\sim X(\ell,z)\ell -4\ze X(\xi,z)\bar{\xi} - \ze^2 L(z)\om \, .
\end{align*}
The rest of the proof follows by construction.
\end{proof}

As in the proof of \leref{lem4.4}, from \eqref{logf-nact}, \eqref{logva3}, \eqref{logex4} and \eqref{logex5}, we can derive the following expansions of logarithmic fields:
\begin{equation}\label{logex6}
\begin{split}
\xi(z) &= X(\xi,z) + \frac{\ze}2 L(z)\eta \,,\qquad
\bar{\xi}(z) = X(\bar\xi,z) - \frac{\ze}2 L(z)\bar\eta \,,\\
\ell(z) &= X(\ell,z) -\zeta X(\xi,z)\bar{\eta}-\zeta X(\bar{\xi},z)\eta+\frac{\ze^2}{2} L(z) \bar{\eta}\eta \, .
\end{split}
\end{equation}

Our next goal is to determine the relations satisfied by the modes of generating fields. We summarize the answer in the next lemma.

\begin{lemma}\label{lem4.5}
Let\/ $V$ be a logVA with elements\/ $\om,\ell\in V_{\bar0}$ and\/ $\xi,\bar\xi\in V_{\bar1}$, whose\/ $(n+\N)$-th products are given by\/
\eqref{logex1a}--\eqref{logex1c}, \eqref{logex2}--\eqref{logex3b}. Suppose that there are\/ $\eta,\bar\eta\in\Der(V)_{\bar1}$ satisfying\/
\eqref{etabareta} and acting according to\/
\eqref{logex4}, such that the braiding map\/ $\N$ is given by\/ \eqref{logex5}. Then the following relations hold
for all\/ $m,k\in\ZZ{:}$
\allowdisplaybreaks
\begin{align*}
[\eta, a_{m}]&=(\eta a)_{m}\,, \quad [\bar{\eta}, a_{m}]=(\bar{\eta} a)_{m}\,,  \qquad a\in \{L,\ell,,\xi,\bar{\xi}\} \, , \\[3pt]
[L_m,L_k] &= (m-k)L_{m+k} \,, \\[3pt]
[L_{m}, {\xi}_{k}]&=(m-k){\xi}_{m+k}+\frac{1}{2}L_{m+k}\eta\, ,\quad
[L_{m}, \bar{\xi}_{k}]=(m-k)\bar{\xi}_{m+k}-\frac{1}{2}L_{m+k}\bar{\eta}\, ,\\[3pt]
[L_{m},\ell_{k}]&=(m-k)\ell_{m+k} \!+\! (m+1)L_{m+k} \!-\! \xi_{m+k}\bar{\eta} \!-\! \bar{\xi}_{m+k}\eta \!+\! \delta_{m,-k} \frac{m^{3}-m}{6} \be\, , \\[3pt]
[\xi_{m},\xi_{k}]&=\xi_{m+k}\eta\, ,\qquad [\bar{\xi}_{m},\bar{\xi}_{k}]=-\bar{\xi}_{m+k}\bar{\eta}\, ,\\[3pt]
[\xi_{m},\bar{\xi}_{k}]&=(m-k)\Bigl(\frac{1}{8}L_{m+k}+\frac{1}{2}\ell_{m+k}\Bigr)+\delta_{m,-k}\frac{m^{3}-m}{12} \be \, \\[3pt]
&+\frac{1}{2}\bar{\xi}_{m+k}\eta-\frac{1}{2}\xi_{m+k}\bar{\eta} -\frac{1}{2}\sum_{j\geq 1}\frac{1}{j} \bigl(L_{m-j}L_{k+j}-L_{k-j}L_{m+j}\bigr) ,\\[3pt]
[\ell_m , \xi_k ]&=-\frac{1}{4}(3k+m+4) \xi_{m+k} + \Bigl(\ell_{m+k}+\frac{5}{8}L_{m+k}\Bigr)\eta + \delta_{m,-k}\frac{m}{4} \be\eta \\[3pt]
& +\sum_{j\geq 1}\frac{1}{j} \bigl(\xi_{m-j}L_{k+j}-L_{k-j}\xi_{m+j}\bigr) ,\\[3pt]
[\ell_{m}, \bar{\xi}_{k}] &= -\frac{1}{4}(3k+m+4) \bar{\xi}_{m+k} - \Bigl(\ell_{m+k}+\frac{5}{8}L_{m+k}\Bigr)\bar\eta - \delta_{m,-k}\frac{m}{4} \be\bar{\eta} \\[3pt]
& +\sum_{j\geq 1}\frac{1}{j} \bigl(\bar{\xi}_{m-j}L_{k+j}-L_{k-j}\bar{\xi}_{m+j}\bigr) ,\\[3pt]
[\ell_{m}, \ell_{k}] &= \frac{1}{2}(m-k) \ell_{m+k} + \delta_{m,-k}\frac{m}{2} \be\bar{\eta}\eta\\[3pt]
& +2\sum_{j\geq 1}\frac{1}{j} \bigl(\xi_{m-j}\bar\xi_{k+j}+\bar{\xi}_{k-j}\xi_{m+j} -\bar{\xi}_{m-j}\xi_{k+j}-\xi_{k-j}\bar\xi_{m+j} \bigr)\\[3pt]
& +2\sum_{j\geq 2}\frac{h(j-1)}{j}\bigl( L_{m-j}L_{k+j}-L_{k-j}L_{m+j}\bigr),
\end{align*}
where\/ $h(j):=1+\frac{1}{2}+\cdots +\frac{1}{j}$ are the harmonic numbers.
\end{lemma}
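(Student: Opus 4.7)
The plan is to treat the first two derivation identities separately and to obtain every remaining bracket by specializing the Borcherds identity \eqref{borcherds2} at $n=0$, combined with the shift $L_m=\om_{(m+1+\N)}$ and $a_m=a_{(m+1+\N)}$ for $a\in\{\ell,\xi,\bar{\xi}\}$ coming from \eqref{logex2}. For $[\eta,a_m]=(\eta a)_m$ and $[\bar{\eta},a_m]=(\bar{\eta} a)_m$, I apply $\eta$ to $\mu_{(n)}(a\otimes v)=a_{(n+\N)}v$, distribute via \eqref{der2}, and rearrange to $[\eta,a_{(n+\N)}]=(\eta a)_{(n+\N)}$ in the supercommutator.

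For the brackets themselves, I split the ten pairs into two groups depending on whether $\N(a\otimes b)$ vanishes. For $(\om,\om)$, $(\om,\ell)$, $(\om,\xi)$, $(\om,\bar{\xi})$, $(\xi,\xi)$, $(\bar{\xi},\bar{\xi})$, the map $\N$ annihilates $a\otimes b$ and a direct calculation shows that $\N_{13}$ is at most $1$-nilpotent on $a\otimes b\otimes v$, so \coref{corbor2} expresses the commutator as a finite sum involving only $\binom{m+1}{j}$ and $\binom{m+1}{j-1}\N_{13}$. One reads off each commutator from the products \eqref{logex3a}, \eqref{logex3b}, the action \eqref{logex4} of $\eta,\bar{\eta}$, and the translation identity \eqref{Tanb}. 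For instance, $\N_{13}(\xi\otimes\xi\otimes v)=\tfrac{1}{2}\om\otimes\xi\otimes\eta v$ is what produces the $\xi_{m+k}\eta$ correction in $[\xi_m,\xi_k]$, and the central term $\delta_{m,-k}\tfrac{m^3-m}{6}\be$ in $[L_m,\ell_k]$ arises from $\om_{(3+\N)}\ell=L_2\ell=\be\vac$ weighted by $\binom{m+1}{3}=\tfrac{m^3-m}{6}$.

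For the remaining four pairs $(\xi,\bar{\xi})$, $(\ell,\xi)$, $(\ell,\bar{\xi})$, $(\ell,\ell)$, where $\N(a\otimes b)\ne 0$, I use the full identity \eqref{borcherds2}. On $\xi\otimes\bar{\xi}$, $\ell\otimes\xi$, $\ell\otimes\bar{\xi}$ one has $\N^{2}=0$, so $\binom{\N_{12}}{j}(a\otimes b)=\tfrac{(-1)^{j-1}}{j}\N(a\otimes b)$ for $j\ge 1$, producing the $\tfrac{1}{j}$-weighted infinite sums in $[\xi_m,\bar{\xi}_k]$, $[\ell_m,\xi_k]$, $[\ell_m,\bar{\xi}_k]$. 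On $\ell\otimes\ell$ only $\N^{3}$ vanishes, with $\N(\ell\otimes\ell)=2\xi\otimes\bar{\xi}-2\bar{\xi}\otimes\xi$ and $\N^{2}(\ell\otimes\ell)=-2\om\otimes\om$; the crucial input is then the Stirling expansion
\begin{equation*}
\prod_{i=0}^{j-1}(\N-i)\equiv (-1)^{j-1}(j-1)!\,\N+(-1)^{j}(j-1)!\,h(j-1)\,\N^{2}\pmod{\N^{3}},
\end{equation*}
which yields $\binom{\N_{12}}{j}\equiv \tfrac{(-1)^{j-1}}{j}\N+\tfrac{(-1)^{j}}{j}h(j-1)\N^{2}$ and thereby produces the two infinite series in $[\ell_m,\ell_k]$: the $\xi\bar{\xi}$-series from the $\N$-contribution and the harmonic $h(j-1)$-series from the $\N^{2}$-contribution.

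The principal obstacle is the combinatorial bookkeeping: tracking Koszul signs for the odd derivations $\eta,\bar{\eta}$ inside $\N$, $\N_{13}$, and $\N_{12}^{2}$; converting between the Virasoro-style indexing $a_n$ and the $(n+\N)$-indexing via the shifts \eqref{logex2}; and matching within the $j$-sums the finite subsums that account for the delta-function central contributions coming from $L_2\ell=\be\vac$ and $\xi_2\bar{\xi}=\tfrac{\be}{2}\vac$. Once the Stirling expansion and the explicit actions of $\N_{13}$ on all relevant triple tensors are recorded, each of the ten commutators reduces to a finite, mechanical verification.
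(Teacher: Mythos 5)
Your proposal follows essentially the same route as the paper's proof: the derivation identities are immediate, and each bracket is extracted from the Borcherds identity \eqref{borcherds2} at $n=0$ (via \coref{corbor2} when $\N(a\otimes b)=0$), using the nilpotency of $\N$, the expansion $(-1)^j\binom{\N_{12}}{j}=-\tfrac1j\N_{12}+\tfrac{h(j-1)}{j}\N_{12}^2+\cdots$, the index shift \eqref{logex2}, the translation identity \eqref{Tanb}, and (implicitly, for products such as $\xi_{(1+\N)}\ell$) skew-symmetry. The only imprecision is your claim that the $\N_{13}$-linear part of $\binom{m+1+\N_{13}}{j}$ is $\binom{m+1}{j-1}\N_{13}$, which is false for $j\ge2$, but this is harmless here because in the relevant cases the $\N_{13}$-contribution survives only for $j\le1$ (e.g.\ $L_{j-1}\xi=0$ for $j\ge2$).
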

\begin{proof}
The identities for $[\eta, a_{m}]$ and $[\bar\eta, a_{m}]$ simply express the fact that $\eta$ and $\bar\eta$ are derivations.
All other formulas follow from the Borcherds identity \eqref{borcherds2} with $n=0$ applied to $a\otimes b\otimes v$,
where $a,b\in\{\om,\ell,\xi,\bar\xi\}$ and $v\in V$ is arbitrary. 
In the case when $\N(a\otimes b)=0$, the left-hand side of \eqref{borcherds2} reduces to the commutator formula from \coref{corbor2}.

Let us consider, for example, $a=b=\xi$. From \coref{corbor2}, we get
\begin{equation*}
\bigl[\xi_{(m+\N)},\xi_{(k+\N)}\bigr] = \sum_{j\ge0 }\mu_{(m+k-j)}( \mu_{(j)}\otimes I) \binom{m+\N_{13}}{j} (\xi\otimes\xi\otimes v) \,,
\end{equation*}
and from \eqref{logf-nact}, \eqref{logex4} and \eqref{logex5}, we have
\begin{equation*}
\N_{13} (\xi\otimes\xi\otimes v) = \frac12 \om\otimes\xi\otimes\eta v \,, \qquad
\N_{13}^2 (\xi\otimes\xi\otimes v) = 0 \,.
\end{equation*}
Then we apply $\mu_{(j)}\otimes I$ to either $\xi\otimes\xi\otimes v$ or $\om\otimes\xi\otimes\eta v$, using the $(j+\N)$-th products
given by \eqref{logex1a}--\eqref{logex1c}, \eqref{logex2}--\eqref{logex3b}. Observe that $\xi_{(j+\N)}\xi=0$ for $j\ge0$, and
$\om_{(j+\N)}\xi=0$ for $j\ge2$, while $\om_{(1+\N)}\xi=2\xi$. Hence, the above sum over $j$ reduces to the terms with $j=0$ and $j=1$:
\begin{align*}
\mu_{(m+k)}( &\mu_{(0)}\otimes I)(\xi\otimes\xi\otimes v)
+\mu_{(m+k-1)}( \mu_{(1)}\otimes I) (m+\N_{13}) (\xi\otimes\xi\otimes v) \\
&= \frac12 \mu_{(m+k-1)}( \mu_{(1)}\otimes I) (\om\otimes\xi\otimes\eta v) \\
&= \frac12 \mu_{(m+k-1)} (2\xi\otimes\eta v)
= \xi_{(m+k-1+\N)} (\eta v) \,.
\end{align*}
This proves that $[\xi_{m},\xi_{k}]=\xi_{m+k}\eta$ after we make the shift \eqref{logex2}.

As another example, let us consider $a=b=\ell$. In this case, we have:
\begin{align*}
\N_{12} (\ell\otimes\ell\otimes v) &= 2\xi\otimes\bar\xi\otimes v - 2\bar\xi\otimes\xi\otimes v \,, \\
\N_{12}^2 (\ell\otimes\ell\otimes v) &= -2 \om\otimes\om\otimes v \,, \\
\N_{13} (\ell\otimes\ell\otimes v) &= \xi\otimes\ell\otimes\bar\eta v + \bar\xi\otimes\ell\otimes\eta v  \,, \\
\N_{13}^2 (\ell\otimes\ell\otimes v) &= \om\otimes\ell\otimes\bar\eta\eta v \,,
\end{align*}
and the higher powers are zero. Also, notice that for $j\ge1$,
\begin{equation*}
(-1)^j \binom{\N_{12}}{j} 
= -\frac1j \N_{12} + \frac{h(j-1)}{j} \N_{12}^2 +\cdots
\end{equation*}
where the dots signify higher powers of $\N_{12}$ and we let $h(0):=0$.
Hence, the first sum in the left-hand side of \eqref{borcherds2} with $n=0$ applied to $\ell\otimes\ell\otimes v$ is
\begin{align*}
\mu_{(m)} &(I\otimes \mu_{(k)}) (\ell\otimes\ell\otimes v)
-\sum_{j\ge1} \frac1j \mu_{(m-j)}(I\otimes \mu_{(k+j)}) \N_{12} (\ell\otimes\ell\otimes v) \\
&+\sum_{j\ge2} \frac{h(j-1)}{j} \mu_{(m-j)}(I\otimes \mu_{(k+j)}) \N_{12}^2 (\ell\otimes\ell\otimes v) \\
&= \ell_{(m+\N)} (\ell_{(k+\N)} v) -2 \sum_{j\ge1} \frac1j \Bigl( \xi_{(m+\N)} (\bar\xi_{(k+\N)} v) - \bar\xi_{(m+\N)} (\xi_{(k+\N)} v) \Bigr) \\
&-2\sum_{j\ge2} \frac{h(j-1)}{j} \om_{(m+\N)} (\om_{(k+\N)} v) \,.
\end{align*}
Since $P(\ell\otimes\ell)=\ell\otimes\ell$, the second sum in the left-hand side of \eqref{borcherds2} is obtained from the first sum by swapping
$k$ and $m$. 

To compute the right-hand side of \eqref{borcherds2}, we first observe that $\om_{(j+\N)}\ell=0$ for $j\ge4$
due to \eqref{logex1c}. Hence, we only need to find the coefficient of $\N_{13}^2$ in the binomials
$\binom{m+\N_{13}}{j}$ for $j=0,1,2,3$, which gives $0$, $0$, $1/2$ and $(m-1)/2$, respectively.
However, $\om_{(2+\N)}\ell=0$, so only $j=3$ contributes.
Second, to find the contributions involving $\N_{13}^1$, we need to determine the products 
$\xi_{(j+\N)}\ell$ and $\bar\xi_{(j+\N)}\ell$. For this, as before, we use the skew-symmetry relation from \prref{pro3.16}:
\begin{align*}
X(\xi,z)\ell &= e^{zT} X(\ell,-z)\xi \sim e^{zT} \Bigl( \frac12 \xi (-z)^{-2} + \frac34 T\xi (-z)^{-1} \Bigr) \\
&\sim \frac12 \xi z^{-2} - \frac14 T\xi z^{-1}
\end{align*}
where $\sim$ denotes equality modulo terms with non-negative powers of $z$.
The same calculation applies for $\bar\xi$. We obtain
\begin{align*}
\xi_{(0+\N)} \ell = - \frac14 T\xi \,, \quad \xi_{(1+\N)} \ell = \frac12 \xi \,, \quad
\bar\xi_{(0+\N)} \ell = - \frac14 T\bar\xi \,, \quad \bar\xi_{(1+\N)} \ell = \frac12 \bar\xi \,,
\end{align*}
and the higher products are zero. We see that we may have a contribution $\N_{13}^1$ from $\binom{m+\N_{13}}{j}$
only for $j=1$. 

Putting these together, we find that the right-hand side of \eqref{borcherds2} is:
\begin{align*}
\sum_{j\ge0} \binom{m}{j} &\mu_{(m+k-j)}( \mu_{(j)}\otimes I) (\ell\otimes\ell\otimes v) \\[6pt]
&+ \mu_{(m+k-1)}( \mu_{(1)}\otimes I) \N_{13} (\ell\otimes\ell\otimes v) \\[6pt]
&+ \frac{m-1}2 \mu_{(m+k-3)}( \mu_{(3)}\otimes I) \N_{13}^2 (\ell\otimes\ell\otimes v) \,.
\end{align*}
Now we compute each of the terms in this sum. By \eqref{logex2}--\eqref{logex3b} and \eqref{Tanb}, the first one is
\begin{align*}
\sum_{j\ge0} &\binom{m}{j} (\ell_{(j+\N)}\ell)_{(m+k-j+\N)} v 
= \frac12 (T\ell)_{(m+k+\N)} v + m \ell_{(m+k-1+\N)} v \\
&= -\frac12 (-m+k+\N)\ell_{(m+k-1+\N)} v \\
&= \frac12 (m-k)\ell_{(m+k-1+\N)} v -\frac12 \xi_{(m+k-1+\N)} (\bar\eta v) -\frac12 \bar\xi_{(m+k-1+\N)} (\eta v) \,.
\end{align*}
The remaining two terms give
\begin{align*}
\mu_{(m+k-1)} &( \mu_{(1)}\otimes I) (\xi\otimes\ell\otimes\bar\eta v + \bar\xi\otimes\ell\otimes\eta v) \\
&= (\xi_{(1+\N)} \ell)_{(m+k-1+\N)} (\bar\eta v) + (\bar\xi_{(1+\N)} \ell)_{(m+k-1+\N)} (\eta v) \\
&= \frac12 \xi_{(m+k-1+\N)} (\bar\eta v) + \frac12 \bar\xi_{(m+k-1+\N)} (\eta v) \,,
\end{align*}
and
\begin{align*}
\frac{m-1}2 &\mu_{(m+k-3)}( \mu_{(3)}\otimes I) (\om\otimes\ell\otimes\bar\eta\eta v) 
= \frac{m-1}2 (\om_{(3)}\ell)_{(m+k-3+\N)} (\bar\eta\eta v) \\
&=\frac{m-1}2 \be \vac_{(m+k-3+\N)} (\bar\eta\eta v)
= \delta_{m+k,2} \frac{m-1}2 \be\bar\eta\eta v \,.
\end{align*}
Taking into account the shift \eqref{logex2}, we obtain the formula for $[\ell_{m}, \ell_{k}]$.
This completes the proof of the lemma.
\end{proof}

\begin{remark}\label{remnew}
We would like to stress that, even though the singular OPE $\xi(z)\xi(0) \sim 0$ is trivial, the modes of $\xi$ do not commute with each other.
This is a new phenomenon in the logarithmic case, which does not occur for ordinary vertex algebras. Similarly, although the singular OPE of $\ell(z)\ell(0)$
contains only the fields $L(z)$ and $\ell(z)$, the commutator $[\ell_{m}, \ell_{k}]$ involves the modes of $\xi$ and $\bar\xi$ as well.
In \cite{G2}, it is claimed that $\om$ and $\ell$ generate an LCFT; however, we were unable to construct a logVA generated only by them.
\end{remark}

After we have derived the properties of a hypothetical logVA $V$ corresponding to Gurarie--Ludwig's LCFT, we now reverse course and use these properties to define $V$
and prove that it is indeed a conformal logVA. 
To this end, we introduce a unital associative superalgebra $\A$ with a set of generators $L_{n}$, $\ell_{n}$, $\xi_{n}$, $\bar{\xi}_{n}$, $\eta$, $\bar{\eta}$ ($n\in \ZZ$) subject to the
relations from \leref{lem4.5}. However, we need to make sense of the infinite sums in the relations. 

Fix $\beta\in\CC$.
Let $U$ be a vector superspace with a basis
$\{L_{n},\ell_{n},\xi_{n},\bar{\xi}_{n}, \eta,\bar{\eta} \}_{n\in \ZZ}$
where $L_n$ and $\ell_n$ are even and the other basis vectors are odd.
We denote by $\T=\bigoplus_{j\ge0} U^{\otimes j}$ 
the tensor algebra over $U$.
Both $U$ and $\T$ are $\ZZ$-graded by setting 
\begin{equation*}
\deg L_n = \deg \ell_n = \deg\xi_{n} = \deg\bar{\xi}_{n}= -n \,, \quad \deg 1=\deg\eta=\deg\bar{\eta}=0 \,.
\end{equation*}
We consider the completion $\overline\T$ of $\T$, which extends it by allowing infinite sums 
of the form
\begin{equation*}
\sum_{j\ge N} t_j \otimes u_j \quad\text{where}\quad 
t_j \in\T \,, \; u_j\in U \,,\; \deg t_j = m+j \,, \; \deg u_j =- j\,,
\end{equation*}
for some fixed $m,N\in\ZZ$. 

Clearly, $\overline\T$ is no longer an associative superalgebra under $\otimes$,
but its quotient $\A$ will be.
We define $\A$ as the vector superspace quotient $\overline\T / (\T\otimes\mathcal{R}\otimes\T)$,
where $\mathcal{R}\subset\overline\T$ is spanned by all relations from \leref{lem4.5} together with the relations \eqref{etabareta}.
The tensor product in $\overline\T$ induces a well-defined product in $\A$, because one can use the relations
in $\mathcal{R}$ to rewrite the product of two elements of $\overline\T$ as an element of $\overline\T$ mod
$\T\otimes\mathcal{R}\otimes\T$.

Therefore, $\A$ is a unital associative superalgebra, and it inherits a $\ZZ$-grading from $\overline\T$.
Consider the subspace $\overline\T_- \subset \overline\T$ consisting of all sums $\sum_{j\ge -1} t_j \otimes u_j$ as above with $N=-1$.
Let $\A_-$ be the image of $\overline\T_-$ in the quotient $\A$.
Then $\A_-$ is a left ideal of $\A$, because in the relations from \leref{lem4.5}, any commutator $[u_m,v_k]$ with
$u,v\in\{L,\ell,\xi,\bar\xi\}$ and $k,m\ge-1$, is expressed as an element of $\A_-$.

We let $V$ be the left $\A$-module $\A/\A_-$.
Then $V$ is generated as an $\A$-module by the even vector $\vac := 1+\A_-$, where $1\in\A$ is the identity element, with the properties that
\begin{equation}\label{logex7}
L_{n}\vac=\ell_{n}\vac=\xi_{n}\vac=\bar{\xi}_{n}\vac = \eta\vac=\bar{\eta}\vac=0\,,  \qquad n\geq -1\,.
\end{equation}

\begin{lemma}\label{lem4.6}
The superspace\/ $V$ is linearly spanned by\/ $\vac$ and monomials of the form
\begin{equation*}
a^{1}_{n_1} \cdots a^{r}_{n_r} \vac \quad\text{where}\quad
n_i \le -2 \,, \; a^i\in\{L,\ell,\xi,\bar{\xi}\} \,, \; 1\le i\le r \,.
\end{equation*}
Hence, $V$ is graded by\/ $\ZZ_+$ so that\/ $\deg\vac=0$.
In particular, for every\/ $v\in V$, we have
\begin{equation*}
L_{n}v=\ell_{n}v=\xi_{n}v=\bar{\xi}_{n}v =0\,,  \qquad n\gg 0\,.
\end{equation*}
\end{lemma}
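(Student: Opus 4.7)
The plan is to carry out a PBW-type straightening, reducing every monomial in $\A$ applied to $\vac$ to a linear combination of good monomials. First I dispose of $\eta,\bar\eta$: because they act as derivations via $[\eta,a_m]=(\eta a)_m$ and $[\bar\eta,a_m]=(\bar\eta a)_m$, any occurrence of $\eta$ or $\bar\eta$ can be commuted past all mode factors and absorbed by $\eta\vac=\bar\eta\vac=0$ from \eqref{logex7}. Inspecting diagram \eqref{logex4}, $\eta$ and $\bar\eta$ preserve the set $\{L,\ell,\xi,\bar\xi\}\cup\{0\}$ of mode types, so the remaining task is to reduce monomials $a^1_{n_1}\cdots a^r_{n_r}\vac$ with $a^i\in\{L,\ell,\xi,\bar\xi\}$.

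I then prove in tandem, by induction on $r$:
\begin{itemize}
\item[$(A)_r$:] Every length-$r$ monomial $a^1_{n_1}\cdots a^r_{n_r}\vac$ lies in the span $V'$ of good monomials.
\item[$(B)_r$:] For every good monomial $g$ of length $\le r$ and every $a\in\{L,\ell,\xi,\bar\xi\}$, there exists $N$ with $a_n g=0$ for $n\ge N$.
\end{itemize}
The base case $r=0$ is immediate from \eqref{logex7}. In the inductive step I run a secondary induction on the \emph{badness} $\beta(v)=\sum_{i:\,n_i\ge-1}(r+1-i)$. If $\beta(v)=0$ the monomial is already good; otherwise I choose the rightmost index $i$ with $n_i\ge-1$ and commute $a^i_{n_i}$ past $a^{i+1}_{n_{i+1}}$ using the relations of Lemma \ref{lem4.5}. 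The "transport" term $\pm a^{i+1}_{n_{i+1}}a^i_{n_i}$ moves the bad factor one step right, strictly decreasing $\beta$. The remaining commutator contributes terms of three benign types — single-mode $c_{n_i+n_{i+1}}$, constant $\delta\cdot\beta\cdot\vac$, and single-mode-times-$\eta/\bar\eta$ — each of which lowers the effective length and is handled by $(A)_{r-1}$ (after applying the $\eta$-reduction of Step 1 again).

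The delicate contribution is the bilinear sum $\sum_{j\ge1}\tfrac{1}{j}(L_{n_i-j}L_{n_{i+1}+j}-L_{n_{i+1}-j}L_{n_i+j})$ appearing in the commutators of $\ell,\xi,\bar\xi$ among themselves, applied to the tail $t:=a^{i+2}_{n_{i+2}}\cdots a^r_{n_r}\vac$. Since $t$ has length $r-i-1<r$, the already-established $(B)_{r-i-1}$ guarantees that $L_m t=0$ for $m\gg0$, so only finitely many $j$ contribute and the formal sum makes sense term by term. Because $n_i\ge-1$ and $j\ge1$, we have $n_i-j\le-2$ automatically, so the substitution replaces the bad factor at position $i$ by a good one; any residual badness now sits at position $\ge i+1$, giving a strictly smaller value of $\beta$. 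A structural observation from Lemma \ref{lem4.5} is crucial here: $[L_m,a_k]$ contains no bilinear terms for any $a$, so once we are inside an $L L$-bilinear expression, no further bilinears are generated. This ensures the secondary induction actually closes. Statement $(B)_r$ is proved in the same style: when $a_n$ with $n\gg0$ is commuted through a good monomial, the single-mode, constant, and $\eta$-terms all vanish by $(B)_{r-1}$, and every bilinear sum is truncated to a finite one by $(B)_{r-1}$ applied to the relevant subtail, leaving only terms that likewise vanish for $n$ large.

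Once the spanning claim holds, the grading assertion is immediate: a good monomial $a^1_{n_1}\cdots a^r_{n_r}\vac$ has degree $-\sum n_i\ge 2r\ge 0$, with equality only for $r=0$, and the final statement $L_nv=\ell_nv=\xi_nv=\bar\xi_nv=0$ for $n\gg0$ follows from $(B)$ after expanding $v$ as a finite combination of good monomials. The main obstacle throughout is taming the bilinear sums from Lemma \ref{lem4.5}; the double induction on (length, badness) is designed precisely so that every reduction either strictly decreases the length or strictly pushes the rightmost bad factor toward the vacuum, with $(B)$ supplying exactly the finiteness needed to process the formal infinite sums in $\A$.
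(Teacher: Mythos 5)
Your proposal follows essentially the same route as the paper's proof: a PBW-type straightening by induction on the length of the monomial together with a secondary measure that pushes each factor with mode index $\ge -1$ rightward toward the vacuum (where it is killed by \eqref{logex7}), using the commutators of Lemma \ref{lem4.5} as correction terms and first absorbing $\eta,\bar\eta$ via the derivation relations. You are in fact more explicit than the paper about the finiteness of the bilinear sums (your statement $(B)_r$); the one point needing a further refinement is that for the terms $c_{n-j}d_{k+j}t$ with $j$ in the finite range where $d_{k+j}t\ne0$, the asserted vanishing for $n\gg0$ invokes $(B)$ for a good monomial of the same length $r$, so the tandem induction should additionally track the total degree (which strictly drops for those terms) to avoid circularity — and note that the ``no further bilinears'' observation is not actually needed, since your badness count $\beta$ already decreases even when $[\xi_\bullet,\bar\xi_\bullet]$-type bilinears reappear.
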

\begin{proof}
This is similar to the case of Lie algebras. We will prove by induction on $r$ that every monomial $g_1\cdots g_r\vac$, where each $g_i$ is a generator of $\A$, 
can be rewritten as a linear combination of monomials with $r$ or fewer factors such that $\deg g_i>1$ for all $i$.
The claim is trivial for $r=0$ and $r=1$.

For $r\ge2$, by the inductive assumption, we can assume that $\deg g_i>1$ for $2\le i\le r$.
If $\deg g_1>1$, we are done, so let $\deg g_1\le 1$.
Then we use the relations of $\A$ to rewrite $g_1g_2$ as $(-1)^{p(g_1)p(g_2)} g_2g_1 + [g_1,g_2]$ and apply again
the inductive assumption for $g_1g_3 \cdots g_r\vac$. The bracket $[g_1,g_2]$ is either already in the desired form or involves terms in which the degree of the first
factor is strictly larger that $\deg g_1$. This means that repeating this process finitely many times will give us terms with $\deg g_1>1$.
\end{proof}

It follows from \leref{lem4.6} that all infinite sums in $\A$ become finite when acting on a fixed $v\in V$.
We define the elements of $V$:
\begin{equation}\label{logex8}
\om=L_{-2}\vac \,, \qquad \ell=\ell_{-2}\vac \,, \qquad \xi=\xi_{-2}\vac \,, \qquad \bar\xi=\bar{\xi}_{-2}\vac \,,
\end{equation}
and the logarithmic fields $L(z)$, $\ell(z)$, $\xi(z)$, $\bar\xi(z)$ on $V$ given by \eqref{logex0}, \eqref{logex01} and \eqref{logex6}.
We let the translation operator $T=L_{-1}$, and the braiding map $\N$ be as in \eqref{logex5}. As before, this gives rise to a braiding map
$\NN:=(\ad\otimes\ad)(\N)$ on $\LF(V)$.

\begin{theorem}\label{thm4.7}
The logarithmic fields\/ $L(z)$, $\ell(z)$, $\xi(z)$, $\bar\xi(z)$ are translation covariant and\/ $\NN$-local, and they generate
the structure of a logVA on\/ $V$ such that\/ $\eta,\bar\eta\in\Der(V)$. The singular OPEs among these fields are given by\/ \eqref{logex-ope1}, \eqref{logex-ope2}.
Moreover, $V$ is a conformal logVA with a conformal vector\/ $\om$ and central charge\/ $0$.
\end{theorem}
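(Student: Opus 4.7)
The plan is to apply the Existence Theorem \ref{t3.14} (equivalently \ref{l2.19}) to
\[ \V = \Span\{I, L(z), \ell(z), \xi(z), \bar\xi(z)\} \subset \LF(V) \,. \]
Four conditions must be checked: (a) $\N$ is a braiding map on $V$ with $[T,\eta] = [T,\bar\eta] = 0$; (b) each generating field is translation covariant, so $\V \subset \LFT(V)$; (c) $\V$ is $\NN$-local, where $\NN = (\ad\otimes\ad)(\N)$; (d) $\V$ is complete in the sense of \thref{l2.19}. Point (a) is immediate from $\eta^{2} = \bar\eta^{2} = 0$ and $\eta\bar\eta + \bar\eta\eta = 0$ combined with \eqref{logex4}: these give symmetry, pairwise (anti)commutation of components, and the local nilpotency verified by the explicit computation $\N^{2}(\ell\otimes\ell) = -2\om\otimes\om$ with $\N^{3}(\ell\otimes\ell) = 0$ (and $\N^{2} = 0$ on all other tensor products of generators). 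The relations $[\eta, L_{m}] = (\eta L)_{m} = 0$ and $[\bar\eta, L_{m}] = 0$ in \leref{lem4.5} yield $[T,\eta] = [T,\bar\eta] = 0$. Completeness (d) is exactly \leref{lem4.6}.

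For (b), we combine the expansions \eqref{logex6} with the formulas for $[L_{m}, a_{k}]$ in \leref{lem4.5}. For $\xi(z) = X(\xi,z) + \tfrac{\ze}{2} L(z)\eta$, the relation $[L_{-1},\xi_{k}] = (-1-k)\xi_{k-1} + \tfrac{1}{2} L_{k-1}\eta$ produces exactly the two contributions required for $D_{z}\xi(z) = \partial_{z}\xi(z) + z^{-1}\partial_{\ze}\xi(z)$; the seemingly anomalous term $\tfrac{1}{2}L_{m+k}\eta$ in $[L_{m},\xi_{k}]$ (and its analogues for $\bar\xi_{k}$ and $\ell_{k}$) is precisely what the logarithmic structure demands. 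The checks for $\bar\xi(z)$, $\ell(z)$, and $L(z)$ itself are of the same flavor.

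The main obstacle is (c). The plan is to compute, for each pair $(a,b) \in \{L,\ell,\xi,\bar\xi\}^{2}$, the commutator $[a(z_{1}),b(z_{2})]_{\pm}$ acting on an arbitrary $v \in V$, using the expansions \eqref{logex6} and the commutation relations of \leref{lem4.5}. Those relations were derived precisely as the $n = 0$ Borcherds identity \eqref{borcherds2} applied to $a\otimes b \otimes v$ with $a,b$ generators, using the OPE data \eqref{logex3a}--\eqref{logex3b} and the braiding map \eqref{logex5}; since $V$ is built by imposing exactly these relations, the $n = 0$ Borcherds identity holds on $V^{\otimes 3}$ for all such pairs. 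The infinite sums in $[\xi_{m},\bar\xi_{k}]$, $[\ell_{m},\xi_{k}]$, $[\ell_{m},\ell_{k}]$ act finitely on any fixed $v$ by \leref{lem4.6} and match the coefficients of $\partial^{(j)}_{z_{2}}\delta_{\N_{13}}(z_{1},z_{2})$ in \eqref{borcherds}. Since the products $a_{(n+\N)}b$ vanish for $n \geq 4$ (visible from \eqref{logex1c}, \eqref{logex3a}, \eqref{logex3b}), \reref{rembor} ensures that the $n = 0$ Borcherds identity implies locality \eqref{logva5}, with a uniform choice $N = 4$. The actual verification — matching, term by term, the commutator coefficients against the Borcherds right-hand side — is lengthy because of the logarithmic corrections in \eqref{logex6} and the nontriviality of $\N_{13}$, but it is mechanical and does not introduce any new input beyond \leref{lem4.5}.

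With (a)--(d) established, \thref{t3.14} produces the logVA structure on $V$ whose state-field correspondence sends $\om,\ell,\xi,\bar\xi$ to $L(z),\ell(z),\xi(z),\bar\xi(z)$, respectively. The first family of relations in \leref{lem4.5}, $[\eta,a_{m}] = (\eta a)_{m}$, combined with the derivation translation \eqref{der2} shows that $\eta$ is a derivation of each $\mu_{(n)}$ restricted to generators, and hence on all of $V$ by induction on the mode-length spanning \leref{lem4.6}; the same holds for $\bar\eta$. The singular OPEs \eqref{logex-ope1}, \eqref{logex-ope2} then reproduce the data of \leref{lem4.4}. For conformality, the Virasoro commutator in \leref{lem4.5} gives Virasoro relations with $c = 0$ for $L_{n} = \om_{(n+1+\N)}$; we have $L_{-1} = T$ by construction; and $L_{0}$ is locally finite because \leref{lem4.6} gives a $\ZZ_{+}$-grading of $V$ with finite-dimensional graded pieces on which $L_{0}^{(s)}$ acts as the degree, while $L_{0}^{(n)} = \tfrac{1}{2}\eta\bar\eta$ preserves each piece and is locally nilpotent. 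Thus $\om$ is a conformal vector of central charge $0$, completing the proof.
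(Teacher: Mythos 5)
Your proposal is correct and follows essentially the same route as the paper's proof: completeness via \leref{lem4.6}, translation covariance read off from the $[T,a_n]$ relations, $\NN$-locality deduced from the $n=0$ Borcherds identity combined with \reref{rembor}, and then \thref{t3.14} to produce the logVA structure, with conformality coming from the Virasoro relations and the grading. The one step the paper makes explicit that you fold into your ``mechanical verification'' is the check that the $(n+\N)$-th products computed inside the module $V=\A/\A_-$ via the annihilation conditions \eqref{logex7} actually reproduce the postulated products \eqref{logex3a}--\eqref{logex3b} (e.g.\ $\ell_0\ell=\ell$ and $\xi_{-1}\bar\xi=\tfrac18 T\om+\tfrac12 T\ell$); this consistency check is needed before one can assert that the relations imposed in $\A$ literally constitute the $n=0$ Borcherds identity of $V$.
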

\begin{proof}
First, it is easy to see that $\N$ is indeed a braiding map on $V$ and $[T\otimes I,\N]=0$. Hence, $\NN$ is a braiding map on $\LF(V)$ satisfying
the conditions of \thref{l2.19}. Let us check the other assumptions of this theorem. From the relations in $\A$, we see that
\begin{align*}
[T,L_n] &= -(n+1)L_{n-1} \,, & &[T,\ell_{n}]=-(n+1)\ell_{n-1} \!-\! \xi_{n-1}\bar{\eta} \!-\! \bar{\xi}_{n-1}\eta \,, \\
[T, {\xi}_{n}]&=-(n+1){\xi}_{n-1}+\frac{1}{2}L_{n-1}\eta\,, &
&[T, \bar{\xi}_{n}]=-(n+1)\bar{\xi}_{n-1}-\frac{1}{2}L_{n-1}\bar{\eta}\,,
\end{align*}
which imply the translation covariance of $L(z)$, $\ell(z)$, $\xi(z)$ and $\bar\xi(z)$.
The relations also imply that $\eta$ and $\bar\eta$ are derivations of all $(n+\N)$-th products.
The completeness of $V$ follows from \leref{lem4.6}.

Using again the relations in $\A$ and \eqref{logex7}, \eqref{logex8}, we check the $(n+\N)$-th products
\eqref{logex1a}--\eqref{logex1c}, \eqref{logex2}--\eqref{logex3b}. Let us do just two examples here.
First,
\begin{align*}
\ell_0 \ell &= \ell_0 \ell_{-2} \vac = [\ell_{0}, \ell_{-2}] \vac \\[3pt]
&= \ell_{-2} \vac +2\sum_{j\geq 1}\frac{1}{j} \bigl(\xi_{-j}\bar\xi_{-2+j}+\bar{\xi}_{-2-j}\xi_{j} -\bar{\xi}_{-j}\xi_{-2+j}-\xi_{-2-j}\bar\xi_{j} \bigr) \vac \\[3pt]
& \qquad\;\;\;\;\;+2\sum_{j\geq 2}\frac{h(j-1)}{j}\bigl( L_{-j}L_{-2+j}-L_{-2-j}L_{j}\bigr) \vac \\[3pt]
&= \ell_{-2} \vac = \ell \,.
\end{align*}
Similarly,
\begin{align*}
\xi_{-1}\bar\xi &=\xi_{-1}\bar{\xi}_{-2}\vac =  [\xi_{-1},\bar{\xi}_{-2}]\vac=\frac{1}{8}L_{-3} \vac + \frac{1}{2}\ell_{-3}\vac \\[3pt]
&=\frac{1}{8}[T,L_{-2}] \vac + \frac{1}{2}[T,\ell_{-2}]\vac 
= \frac{1}{8}TL_{-2}\vac + \frac{1}{2}T\ell_{-2} \vac
= \frac{1}{8}T\om + \frac{1}{2}T\ell \,.
\end{align*}
The rest of the $(n+\N)$-th products are verified in the same way.

Then, by the proof of \leref{lem4.5},
the Borcherds identity \eqref{borcherds2} holds for $n=0$ and all $k,m\in\ZZ$ when applied to $a\otimes b\otimes v$,
where $a,b\in\{\om,\ell,\xi,\bar\xi\}$ and $v\in V$ is arbitrary. From \reref{rembor}, we obtain that
our generating fields span an $\NN$-local subspace of $\LF(V)$. By \thref{t3.14}, $V$ has the structure of a logVA.

Finally, the singular OPEs are as in \eqref{logex-ope1} and \eqref{logex-ope2}, due to \leref{lem4.4}.
In particular, the OPEs \eqref{logex-ope1} of $L(z)$ imply that $\om$ is a conformal vector with central charge $0$.
\end{proof}

\subsection*{Acknowledgments}
Preliminary versions of some of the results of this paper were presented by the first author at the
Mini-conference on Vertex Algebras at the University of Denver in October 2016
and the MIT Infinite Dimensional Algebra Seminar in May 2017.
He is grateful to the organizers for these opportunities, and to Thomas Creutzig, Pavel Etingof, Victor Gurarie, and Antun Milas
for valuable discussions. The second author would like to thank Marco Aldi and Reimundo Heluani for 
stimulating discussions on a related subject,
and North Carolina State University for the hospitality during July 2021 when this paper was completed.
The first author was supported in part by a Simons Foundation grant 584741.

\bibliographystyle{amsalpha}

\begin{thebibliography}{DHVW}

\bibitem[AH]{AH}
Aldi, M., Heluani, R.:
\textit{Dilogarithms, OPE, and twisted T-duality}.
Int. Math. Res. Not. \textbf{6}, 1528--1575 (2012)


\bibitem[B]{B}
Bakalov, B.:
\textit{Twisted logarithmic modules of vertex algebras}.
Commun. Math. Phys. \textbf{345}, 355--383 (2016)

\bibitem[BK]{BK2}
Bakalov, B., Kac, V.G.:
\textit{Generalized vertex algebras}.
In: ``Lie theory and its applications in physics VI,'' 3--25,
Heron Press, Sofia, 2006; math.QA/0602072

\bibitem[BS1]{BS1}
Bakalov, B., Sullivan, M.:
\textit{Twisted logarithmic modules of free field algebras}.
J. Math. Phys. \textbf{57}, 061701, 18 pp. (2016)

\bibitem[BS2]{BS2}
Bakalov, B., Sullivan, M.:
\textit{Twisted logarithmic modules of lattice vertex algebras}.
Trans. Amer. Math. Soc. \textbf{371}, 7995--8027 (2019)

\bibitem[BPZ]{BPZ} 
Belavin,  A.A., Polyakov, A.M., Zamolodchikov, A.B.:
\textit{Infinite conformal symmetry in two-dimensional quantum field theory}.
Nuclear Phys. B \textbf{241}, 333--380 (1984)

\bibitem[Bo]{Bo}
Borcherds, R.E.:
\textit{Vertex algebras, Kac--Moody algebras, and the Monster}.
Proc. Nat. Acad. Sci. USA \textbf{83}, 3068--3071 (1986)

\bibitem[CR]{CR}
Creutzig T., Ridout, D.:
\textit{Logarithmic conformal field theory: beyond an introduction}. 
J. Phys. A \textbf{46}, 494006, 72 pp. (2013)

\bibitem[D]{D}
Dong, C.:
\textit{Twisted modules for vertex algebras associated with even lattices}.
J. Algebra {\bf 165}, 91--112 (1994)

\bibitem[DGK]{DGK}
De Sole, A., Gardini, M., Kac, V.G.:
 \textit{On the structure of quantum vertex algebras}.
{J. Math. Phys.} \textbf{61}, 011701, 29 pp. (2020)

\bibitem[DK]{DK} 
De Sole, A., Kac, V.G.:
\textit{Finite vs affine $W$-algebras}.
Japanese J. Math. \textbf{1}, 137--261 (2006)

\bibitem[DL]{DL}
Dong, C., Lepowsky, J.:
\textit{Generalized vertex algebras and relative vertex operators}.
Progress in Math., 112, Birkh\"auser Boston, 1993

\bibitem[DMS]{DMS}
Di Francesco, P., Mathieu, P., S\'en\'echal, D.:
\textit{Conformal field theory}.
Graduate Texts in Contemporary Physics, Springer--Verlag, New York, 1997

\bibitem[EK]{EK}
Etingof, P., Kazhdan, D.:
\emph{Quantization of Lie bialgebras, Part V: Quantum vertex operator algebras}. 
Sel. Math., New Ser. \textbf{6}, 105--130 (2000)

\bibitem[FB]{FB}
Frenkel, E., Ben-Zvi, D.:
\textit{Vertex algebras and algebraic curves}.
Math. Surveys and Monographs, 88,
Amer. Math. Soc., Providence, RI, 2001; 2nd ed., 2004

\bibitem[FFR]{FFR}
Feingold, A.J., Frenkel, I.B., Ries, J.F.X.:
\textit{Spinor construction of vertex operator algebras, triality, and\/ $E_8^{(1)}$}. 
Contemporary Math., 121, Amer. Math. Soc., Providence, RI, 1991

\bibitem[FLM]{FLM}
Frenkel, I.B., Lepowsky, J.,  Meurman, A.:
\textit{Vertex operator algebras and the Monster}.
Pure and Appl. Math., 134,  Academic Press,  Boston, 1988

\bibitem[FR]{FR}
Frenkel, E., Reshetikhin, N.:
\textit{Towards deformed chiral algebras}.
In Proc. of the Quantum Group Symposium 
at the XXIth Int. Coll. on Group Theor. Methods in Physics, Goslar, 1996;
arXiv:q-alg/9706023

\bibitem[Ga]{Ga}
Gaberdiel, M.R.:
\textit{An algebraic approach to logarithmic conformal field theory}.
Int. J. Modern Phys. A \textbf{18}, 4593--4638 (2003)

\bibitem[GL]{GL}
Gao, Y., Li, H.:
\textit{Generalized vertex algebras generated by parafermion-like vertex operators}.
{J. Algebra} \textbf{240}, 771--807 (2001)

\bibitem[Go]{Go}
Goddard, P.:
\textit{Meromorphic conformal field theory}.
In: ``Infinite-dimensional Lie algebras and groups,'' 
556--587, Adv. Ser. Math. Phys., 7, 
World Sci. Publishing, Teaneck, NJ, 1989

\bibitem[G1]{G1}
Gurarie, V.:
 \textit{Logarithmic operators in conformal field theory}.
 Nuclear Phys. B \textbf{410}, 535--549 (1993)

\bibitem[G2]{G2}
Gurarie, V.:
  \textit{Logarithmic operators and logarithmic conformal field theories}.
  J. Phys. A: Math. Theor. \textbf{46}, 494003, 18pp. (2013)

\bibitem[GL1]{GL1}
Gurarie, V., Ludwig, A.W.W.:
\textit{Conformal algebras of two-dimensional disordered systems}. 
J. Phys. A \textbf{35}, no. 27, L377–L384 (2002)

\bibitem[GL2]{GL2}
Gurarie, V., Ludwig, A.W.W.:
\textit{Conformal field theory at central charge $c=0$ and two-dimensional critical systems with quenched disorder}. 
In: From fields to strings: circumnavigating theoretical physics, vol. 2, 1384--1440, World Sci. Publ., Singapore, 2005;
hep-th/0409105

\bibitem[H]{H}
Huang, Y.-Z.:
\textit{Generalized twisted modules associated to general automorphisms of a vertex operator algebra}. 
Commun. Math. Phys. \textbf{298}, 265--292 (2010)

\bibitem[K]{K}
Kac, V.G.:
\textit{Vertex algebras for beginners}. 
University Lecture Series, 10, 
Amer. Math. Soc., Providence, RI, 1996; 2nd ed., 1998

\bibitem[KRR]{KRR}
Kac, V.G., Raina, A.K., Rozhkovskaya, N.:
\textit{Bombay lectures on highest weight representations of infinite dimensional Lie algebras}. 
2nd ed., Advanced Ser. in Math. Phys., 29. 
World Sci. Pub. Co. Pte. Ltd., Hackensack, NJ, 2013

\bibitem[Kau1]{Kau1}
Kausch, H.G.:
\textit{Curiosities at $c=-2$}.
Preprint (1995), hep-th/9510149

\bibitem[Kau2]{Kau2}
Kausch, H.G.:
\textit{Symplectic fermions}.
Nuclear Phys. B \textbf{583}, 513--541 (2000)

\bibitem[Le]{Le}
Lepowsky, J.:
\textit{Calculus of twisted vertex operators}.
Proc. Nat. Acad. Sci. USA \textbf{82}, 8295--8299 (1985)

\bibitem[LL]{LL}
Lepowsky, J., Li, H.:
\textit{Introduction to vertex operator algebras and their 
representations}.
Progress in Math., 227, Birkh\"auser Boston, Boston, MA, 2004

\bibitem[Li]{Li}
Li, H.:
\textit{Local systems of vertex operators, vertex superalgebras and modules}. 
J. Pure Appl. Algebra \textbf{109}, 143--195 (1996)

\bibitem[M]{M}
Mossberg, G.:
\textit{Axiomatic vertex algebras and the Jacobi identity}.
{J. Algebra} \textbf{170}, 956--1010 (1994)

\bibitem[V]{V}
Villarreal, J.J.:
\textit{Nilmanifolds and their associated non-local fields}.
Adv. Theor. Math. Phys. \textbf{24}, 1027--1053 (2020)


\end{thebibliography}

\end{document}